 \newtheorem{theorem}{Theorem}[section]
  \newtheorem{proposition}[theorem]{Proposition}
  \newtheorem{corollary}[theorem]{Corollary}
  \newtheorem{lemma}[theorem]{Lemma}
    \newtheorem{question}{Question}
  \newtheorem{introthm}{Theorem}
  \newtheorem{introcor}[introthm]{Corollary}
  \theoremstyle{definition}
  \newtheorem{definition}[theorem]{Definition}
  \newtheorem*{claim*}{Claim}
  \newtheorem{example}[theorem]{Example}
  \newtheorem*{answer*}{Answer}
  \newtheorem*{application*}{Application}
  \theoremstyle{remark}
  \newtheorem{remark}[theorem]{Remark}
  \newtheorem*{remark*}{Remark}
\DeclarePairedDelimiterX{\Norm}[1]{\lVert}{\rVert}{#1}
\theoremstyle{definition}
  \newcommand{\sC}{{\sf C}}
  \newcommand{\sQ}{{\sf Q}}
  \newcommand{\cc}{{\sf c}}   
  \newcommand{\dd}{{\sf d}}     
  \newcommand{\mm}{{\sf m}}   
  \newcommand{\nn}{{\sf n}}
  \newcommand{\qq}{{\sf q}}   
    \newcommand{\RR}{{\mathbb{R}}}
  \newcommand{\gothic}{\mathfrak}
  \newcommand{\go}{{\gothic o}}
  \newcommand{\calH}{\mathcal{H}}
  \newcommand{\calN}{\mathcal{N}}
\newcommand{\xspace}{\,}
\newcommand{\CAT}{\ensuremath{\operatorname{CAT}(0)}\xspace}  
  \newcommand{\ST}{\mathbin{\Big|}} 
\newcommand{\sg}{\mathfrak{g}} 
\newcommand{\ou}{%
  \mathrel{%
    \vcenter{\offinterlineskip
      \ialign{##\cr$+$\cr\noalign{\kern-1.5pt}$-$\cr}%
    }%
  }%
}
\newcommand{\myGlobalTransformation}[2]
{
    \pgftransformcm{0.2}{0.8}{0}{1}{\pgfpoint{#1}{#2}}
}
\begin{document}

\title[Sublinearly Morse geodesics in CAT(0) spaces]{Sublinearly Morse geodesics in CAT(0) spaces: lower divergence and hyperplane characterization}


  \author   {Devin Murray}
 \address{Department of Mathematics, University of Hawaii at Manoa, Hawaii, Hawaii }
  \email{murray@math.hawaii.edu}
  
 \author   {Yulan Qing}
 \address{Shanghai Center for Mathematical Sciences, Fudan University, Shanghai }
 \email{yulan.qing@gmail.com}
 
  \author   {Abdul Zalloum}
 \address{Department of Mathematics, Queen's University, Kingston, ON }
 \email{az32@queensu.ca}

 

\begin{abstract}
We introduce the notion of $\kappa$-lower divergence for geodesic rays in CAT(0) spaces. Building on work of Charney and Sultan we give various characterizations of $\kappa$-contracting geodesic rays using $\kappa$-lower divergence and $\kappa$-slim triangles. We also characterize $\kappa$-contracting geodesic rays in CAT(0) cube complexes using sequences of well-separated hyperplanes.
\end{abstract}

\maketitle

\section{Introduction}

The Gromov boundary \cite{Gro87} has been widely used in studying algebraic, geometric, and dynamic properties of hyperbolic groups (surveyed by Kapovich and Benakli in \cite{Kapovich}). In recent years, a significant amount of effort has been put into studying several different notions of ``hyperbolic-like" geodesics which arise naturally in spaces which are not $\delta$-hyperbolic, but still captures some of the essence of the coarse geometry of $\delta$-hyperbolic spaces. 

Three well-studied properties for geodesic rays in hyperbolic spaces are \emph{super linear lower-divergence}, the \emph{uniformly contracting} property, and the \emph{$\delta$-slim} property. Charney and Sultan showed that, in a CAT(0) space, these properties all characterize Morse geodesics \cite{ChSu2014}. These properties are all widely used tools to study CAT(0) groups (see \cite{caprice}, \cite{H09}, \cite{Ballmann2008},\cite{BF2009}, \cite{Murray2019}, \cite{Behrstock2011} for a sampling). One of the downsides is that contracting geodesics are generally quite rare regardless of the method used to measure them, for the details on random walks see Cordes, Dussaule and Gekhtman\cite{CDG}.

A weaker notion of a ``hyperbolic-like" geodesic is that of a \emph{$\kappa$-contracting geodesic}. Let $\kappa$ be a sublinear function, a geodesic ray $b$ is said to be $\kappa$-contracting if there exists a constant $\cc \geq 0$ such that projections of any disjoint ball $B_r(x)$ to $b$ have diameter bounded above by $\cc \kappa(||x||),$ where $||x||=d(b(0),x).$ In \cite{QRT19}, Qing, Tiozzo and Rafi showed that the collection of $\kappa$-contracting geodesic rays can be used to define a boundary at infinity with some nice properties. In particular, the $\kappa$-contracting boundary serves as a topological model of the Poisson boundary for right-angled Artin groups. Furthermore, it was recently announced by Gekhtman, Qing and Rafi \cite{GQR} that $\kappa$-contracting geodesics are generic in rank-1 CAT(0) spaces in various natural measures.

However, $\kappa$-contracting geodesics are still not very well studied. 

\subsection{Statement of Results.}

In  this  paper  we  provide several new characterizations of $\kappa$-contracting geodesics. Let $b$ be a geodesic ray and fix some  $r>0$ and then $t > r \kappa(t)$. Let $\rho_\kappa (r,t)$ denote the infimum of the lengths of all paths from $b(t-r\kappa(t))$ to $b(t+r\kappa(t))$ which lie outside the open ball of radius $r\kappa(t)$ about $b(t)$. See Figure \ref{fig: definition of lower divergence intro}. Given such a geodesic ray $b$, we define the $\kappa$-\emph{lower divergence} of $b$ to be growth rate of the function $$div_{\kappa}(r):= \underset{t>r\kappa(t)}{\text{inf}}\,\,\frac{\rho_\kappa(r,t)}{\kappa(t)}.$$ 

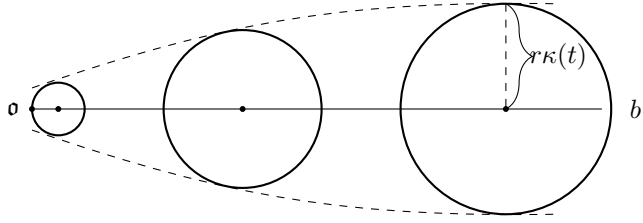
\begin{figure}[h!]
\begin{tikzpicture}[scale=0.7]
 \tikzstyle{vertex} =[circle,draw,fill=black,thick, inner sep=0pt,minimum size=.5 mm]
[thick, 
    scale=1,
    vertex/.style={circle,draw,fill=black,thick,
                   inner sep=0pt,minimum size= .5 mm},
                  
      trans/.style={thick,->, shorten >=6pt,shorten <=6pt,>=stealth},
   ]

  \node[vertex] (o) at (0,0)[label=left:$\go$] {}; 
  \node(a) at (11,0)[label=right:$b$] {}; 
  
  \draw (o)--(a){};

  \draw [dashed] (0, 0.4) to [bend left = 10] (10,2){};
  
    \draw [dashed] (0, -0.4) to [bend right = 10] (10,-2){};
\draw [thick] (0.5,0) circle (0.5cm);
\draw [thick] (4,0) circle (1.5cm);
\draw [thick] (9,0) circle (2cm);
\node[vertex] at (0.5, 0){}; 
\node[vertex] at (4, 0){}; 
\node[vertex] at (9, 0){};

  \draw [decorate,decoration={brace,amplitude=10pt},xshift=0pt,yshift=0pt]
  (9, 2) -- (9,0)  node [thick, black,midway,xshift=0pt,yshift=0pt] {};       
  \draw [dashed]  (9, 2) -- (9,0){};
 \node at (10,1) {$ r\kappa(t)$};

  \end{tikzpicture}
  
\caption{Definition of $\kappa$-lower divergence.}
\label{fig: definition of lower divergence intro}
\end{figure}

\begin{introthm} \label{thm: first intro thm}
For a geodesic ray $b$ in a CAT(0) space. The following are equivalent:
\begin{enumerate}
    \item The geodesic $b$ is $\kappa$-contracting.
    \item The $\kappa$-lower divergence of $b$ is superlinear.
    \item The $\kappa$-lower divergence of $b$ is at least quadratic.
    \item The geodesic $b$ is $\kappa$-slim.
     
\end{enumerate}

\end{introthm}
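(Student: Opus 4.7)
The plan is to prove the cycle $(3) \Rightarrow (2) \Rightarrow (1) \Rightarrow (3)$ together with the equivalence $(1) \Leftrightarrow (4)$, following the Charney--Sultan scheme for Morse geodesics but carefully bookkeeping the sublinear function $\kappa$ at every step. The implication $(3) \Rightarrow (2)$ is immediate since any quadratic function is superlinear. For $(2) \Rightarrow (1)$ I argue by contrapositive: if $b$ fails to be $\kappa$-contracting, then for each candidate constant $\cc$ one finds a ball $B_r(x)$ disjoint from $b$ whose projection to $b$ has diameter exceeding $\cc\,\kappa(\|x\|)$. Joining $x$ to the two extreme projection points by CAT(0) geodesics and gluing through $x$ produces, after a short cut near the ball, a detour around a concentric sub-ball centered at some $b(t)$ with $t$ comparable to $\|x\|$ whose length is only linear in $r\kappa(t)$. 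Letting $\cc$ grow forces $div_{\kappa}(r)$ to stay linear in $r$, contradicting superlinearity.

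For $(1) \Rightarrow (3)$, fix $r$ and $t > r\kappa(t)$ and let $\gamma$ be any path from $b(t-r\kappa(t))$ to $b(t+r\kappa(t))$ lying outside the ball $B_{r\kappa(t)}(b(t))$, of length $L$. I subdivide $\gamma$ into consecutive subarcs $\gamma_i$, each contained in a ball $B_{\rho_i}(x_i)$ with $\rho_i \asymp d(x_i,b)$, chosen so that every such ball is disjoint from $b$. The $\kappa$-contracting hypothesis then gives $\mathrm{diam}\,\pi_b(B_{\rho_i}(x_i)) \leq \cc\,\kappa(\|x_i\|)$; after verifying that $\|x_i\| \asymp t$ uniformly in $i$, this bound becomes $\cc'\,\kappa(t)$. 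Since the projections together cover $b([t-r\kappa(t),t+r\kappa(t)])$, a segment of length $2r\kappa(t)$, the number of subarcs is $\gtrsim r$; combining this with $|\gamma_i| \geq \rho_i$ and separating the small balls near the endpoints of $\gamma$ (where $\gamma$ returns to $b$) from the bulk subarcs (where $\rho_i \asymp r\kappa(t)$) yields $L \gtrsim r^{2}\kappa(t)$, and hence $div_{\kappa}(r) \gtrsim r^{2}$.

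For $(1) \Leftrightarrow (4)$ I use the standard CAT(0) triangle projection. For $(1) \Rightarrow (4)$, given a triangle with one side along $b$, I slide a family of disjoint balls along the two other sides and iterate the $\kappa$-contracting bound, forcing those sides into a $\kappa$-neighborhood of $b$. For $(4) \Rightarrow (1)$, applying $\kappa$-slimness to the triangle with vertices $b(0)$, $x$, and the farthest projection point of $B_r(x)$ directly yields $\mathrm{diam}\,\pi_b(B_r(x)) \leq \cc\,\kappa(\|x\|)$.

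The main obstacle is the length accounting in $(1) \Rightarrow (3)$. Because the $\kappa$-contracting bound $\cc\kappa(\|x_i\|)$ is applied pointwise at each covering ball, one cannot pull $\kappa(\|x_i\|)$ out of the sum over $i$ without first controlling how far the centers $x_i$ can stray radially from $b(t)$. The natural dichotomy is that either $\gamma$ stays in a bounded annulus around $b(t)$, in which case $\|x_i\| \asymp t$ for every $i$ and the argument above goes through, or $\gamma$ wanders radially by more than a definite factor, in which case its length is already large enough for the quadratic bound to hold for trivial reasons. Reconciling this radial control with the requirement that each covering ball $B_{\rho_i}(x_i)$ be disjoint from the entirety of $b$ (not merely from $b(t)$) is the delicate geometric step in the argument.
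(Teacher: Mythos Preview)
Your overall scheme is close to the paper's, but two of your four steps diverge from it in ways worth flagging.

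\textbf{On $(1)\Rightarrow(3)$.} The ``radial control'' obstacle you single out is not where the difficulty lies. The paper's Lemma~\ref{lem: contracting implies projection contracting} shows that $\kappa$-contracting is equivalent to the condition $d(x_b,y_b)\le \cc\,\kappa(x_b)$; with this form the bound at each subdivision point $z_i$ is $\cc\,\kappa((z_i)_b)$, not $\cc\,\kappa(\|z_i\|)$. Since the projections $(z_i)_b$ move by at most $\cc\,\kappa((z_{i-1})_b)$ at each step and there are about $r/\cc$ steps, they inductively stay in $[t-O(r\kappa(t)),\,t+O(r\kappa(t))]\subset[0,2t]$, so $\kappa((z_i)_b)\le 2\kappa(t)$ throughout. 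The paper runs exactly your subdivision-and-count argument with this replacement and obtains $|\beta|/\kappa(t)\ge r^2/(4\cc)-r$ directly; your annulus/dichotomy is unnecessary.

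\textbf{On $(2)\Rightarrow(1)$.} This direct route is where your sketch has a genuine gap. From ``for each $\cc$ there is a ball $B_R(x)$ with projection diameter $>\cc\,\kappa(\|x\|)$'' you need, for arbitrarily large $r$, a path avoiding $B_{r\kappa(t)}(b(t))$ of length $\le C\,r\kappa(t)$. The tent-path $b(s)\to x\to b(s')$ through the center $x$ has the right length, but its geodesic legs need not stay outside a ball of radius $R$ centered at an interior point $b(t)$: both $x$ and $b(s)$ lie on the sphere $\partial B_R(b(t))$ only in favourable configurations, and in CAT(0) a geodesic between two points at distance $R$ from a center may well enter $B_R$. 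The paper avoids this by proving $(2)\Rightarrow(4)$ instead. The negation of $\kappa$-slim (condition~4) hands you, for every $r$, a triangle $\triangle(x_r,y_r,z_r)$ with $[y_r,z_r]\subset b$ and a point $b(t_r)\in[y_r,z_r]$ at distance exactly $r\kappa(t_r)$ from each of the other two sides. From this the paper builds an explicit eight-segment piecewise-geodesic detour of length $\le 20\,r\kappa(t_r)$: one walks along $b$ out to distance $3r\kappa(t_r)$ from $b(t_r)$, drops a short perpendicular (length $\le r\kappa(t_r)$ by CAT(0) convexity of the distance to a side) onto the relevant triangle side, travels along that side, and repeats. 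The $3r\kappa(t_r)$ offset is precisely what guarantees the short connectors stay outside $B_{r\kappa(t_r)}(b(t_r))$. Since you already establish $(4)\Rightarrow(1)$, routing through $(4)$ closes your cycle cleanly; the direct $(2)\Rightarrow(1)$ is both harder and, as sketched, incomplete.
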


We also give a combinatorial characterization of $\kappa$-contracting  geodesic rays in the context of CAT(0) cube complexes. In this setting, the combinatorial structure of a CAT(0) cube complex makes it possible to characterize the Morse and contracting properties of a geodesic ray without reference to projections of balls or wandering quasi-geodesic segments. Two disjoint hyperplanes $h_1, h_2$ are said to be \emph{$k$-well-separated} if each collection of hyperplanes intersecting both $h_1, h_2$, which contains no facing triples, has at most cardinality $k$. A facing triple is a collection of three disjoint hyperplanes, none of which separates the other two.

\begin{introthm} \label{thm: 3rd theorem in the intro}
Let $X$ be a locally finite CAT(0) cube complex. A geodesic ray $b \in X$ is $\kappa$-contracting if and only if there exists $\cc>0$ such that $b$ crosses an infinite sequence of hyperplanes $h_1, h_2,...$ at points $b(t_i) = b \cap h_{i}$ satisfying: 

\begin{enumerate}
    \item $d(t_i,t_{i+1}) \leq \cc \kappa(t_{i+1}).$
    \item $h_i,h_{i+1}$ are $\cc \kappa(t_{i+1})$-well-separated.
\end{enumerate}

Furthermore, if $X$ is a cocompact CAT(0) cube complex which has a factor system in the sense of \cite{Behrstock2017}, then condition (2) can be replaced with a uniform constant $\cc'$.

\end{introthm}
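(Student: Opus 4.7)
My plan is to adapt the Charney--Sultan hyperplane characterization of Morse geodesics in CAT(0) cube complexes \cite{ChSu2014} to the sublinear setting, replacing the uniform well-separation constants by $\cc\kappa(t_{i+1})$. The two implications require different cube-complex techniques, and the factor system refinement follows by specialization.

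For the direction $(\Rightarrow)$, suppose $b$ is $\kappa$-contracting with constant $\cc$. I would build the hyperplane sequence inductively: having chosen $t_i$, pick $t_{i+1}$ so that $t_{i+1}-t_i$ is of order $\kappa(t_{i+1})$, and let $h_{i+1}$ be a hyperplane dual to an edge of $b$ near $b(t_{i+1})$; local finiteness supplies such an edge, so (1) holds by construction. For (2), argue by contradiction: suppose there is a family $k_1,\ldots,k_N$ of pairwise disjoint hyperplanes, each crossing both $h_i$ and $h_{i+1}$, containing no facing triple, with $N$ much larger than $\cc\kappa(t_{i+1})$. A disjoint family without facing triples is totally ordered by separation, so the $k_j$ form a chain stacked on one side of $b$ between $h_i$ and $h_{i+1}$. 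Using this chain, one constructs a piecewise-geodesic path from near $b(t_i)$ to near $b(t_{i+1})$ that crosses each $k_j$ in order; its midpoint $x$ lies at CAT(0) distance of order $N$ from $b$ (local finiteness ensures the combinatorial distance controls the CAT(0) distance along this arc). The ball $B_{N/2}(x)$ is then disjoint from $b$ but its projection contains $[t_i,t_{i+1}]$; for $N$ large enough compared to $\cc\kappa(\|x\|)$, this contradicts $\kappa$-contracting.

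For the direction $(\Leftarrow)$, suppose such a sequence exists. By Theorem A it suffices to prove that $b$ has superlinear $\kappa$-lower divergence, so I would fix $r>0$ and $t>r\kappa(t)$ and let $\gamma$ be any path from $b(t-r\kappa(t))$ to $b(t+r\kappa(t))$ avoiding $B_{r\kappa(t)}(b(t))$. By (1), the hyperplanes $h_i,\ldots,h_j$ of the sequence crossing $b$ inside $[t-r\kappa(t),t+r\kappa(t)]$ have index count $j-i$ of order $r$, and $\gamma$ must cross every one of them. The well-separation of each consecutive pair $h_\ell,h_{\ell+1}$ constrains how $\gamma$ can traverse the region between them while avoiding the ball: any such detour must either pass through the strip of bounded well-separation width (contributing length at least $r\kappa(t)$ from having to re-enter near $b$) or skirt around the strip (contributing length proportional to $r\kappa(t)$). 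Summing over the $\sim r$ consecutive pairs gives $\mathrm{length}(\gamma)/\kappa(t)$ superlinear in $r$, as required.

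For the factor system refinement, I would invoke the Behrstock--Hagen--Sisto hierarchical machinery \cite{Behrstock2017}: in a cocompact cube complex with a factor system, the well-separation constants for pairs of hyperplanes in the sequence are uniformly bounded by data intrinsic to the factor system, so $\cc\kappa(t_{i+1})$ in (2) can be replaced by an absolute constant $\cc'$. I expect the main technical hurdle to be the staircase construction in $(\Rightarrow)$: turning a combinatorial chain of $N$ hyperplanes into a CAT(0)-geometric path whose midpoint lies at distance of order $N$ from $b$ requires careful comparison between the combinatorial and CAT(0) metrics, which is subtle because local finiteness alone does not bound the dimension of the cube complex in a neighborhood of $b$.
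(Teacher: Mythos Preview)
Your proposal diverges from the paper in both directions, and in each case there is a genuine gap.

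\textbf{Forward direction.} Your choice of $h_{i+1}$ as ``a hyperplane dual to an edge of $b$ near $b(t_{i+1})$'' is too naive: an arbitrary hyperplane through $b(t_{i+1})$ may have nearest-point projection to $b$ that is unbounded, and then your staircase point $x$ on $h_i$, even if it is combinatorially far from $b(t_i)$, need not be far from $b$ in the CAT(0) metric. More seriously, the assertion that the ball $B_{N/2}(x)$ ``projects to contain $[t_i,t_{i+1}]$'' is unjustified --- the projection of $x$ to $b$ is governed by where $h_i$ goes, not by the chain $k_1,\dots,k_N$, and without control on $\pi_b(h_i)$ there is no contradiction. The paper avoids both issues by first proving (Lemma~4.6) that on any subinterval of $b$ of length $\gtrsim\kappa$ one can find a hyperplane whose \emph{entire} projection to $b$ lies in that subinterval; this is the hyperplane one takes for $h_i$. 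With that choice, $\kappa$-contraction (via Lemma~4.8) forces every CAT(0) geodesic from $h_i$ to $h_{i+1}$ to pass through a fixed ball of radius $\sim\kappa(t_{i+1})$ around $b(t_{i+1})$; hence every hyperplane crossing both $h_i$ and $h_{i+1}$ meets that ball, and a convex-hull/gate-projection count bounds the size of any chain through it. No staircase or disjoint-ball contradiction is needed.

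\textbf{Backward direction.} Going through $\kappa$-lower divergence is a legal route by Theorem~A, but your sketch does not explain how $\cc\kappa(t_{\ell+1})$-well-separation of $h_\ell,h_{\ell+1}$ forces a detour $\gamma$ avoiding $B_{r\kappa(t)}(b(t))$ to spend length $\sim r\kappa(t)$ between them; well-separation bounds the cardinality of chains of hyperplanes crossing both, not any ``width'' of the strip, and $\gamma$ can in principle cross both $h_\ell$ and $h_{\ell+1}$ while staying close to a single hyperplane that meets them both. The paper instead proves $\kappa$-slimness directly: for $x\in h_i$ and $y\in b$ beyond $b(t_{i+2})$, one bounds $d([x,y]\cap h_{i+1},\,b(t_{i+1}))$ by counting the hyperplanes separating these two points and sorting them into those crossing $[b(t_{i+1}),b(t_{i+2})]$, those crossing both $h_{i+1}$ and $h_{i+2}$ (bounded by well-separation), and a residual handled the same way one level down (Lemma~4.9). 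This yields $\kappa$-slim condition~1 immediately, hence $\kappa$-contracting.

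\textbf{Factor system.} Invoking ``BHS machinery'' is not quite enough; the paper extracts the needed statement as a separate lemma: in a cocompact cube complex with a factor system, there is a uniform $L$ such that any two hyperplanes are either not well-separated at all or $L$-well-separated. The proof is a compactness/pigeonhole argument using cocompactness of the stabilizer action on a hyperplane together with condition~(3) of the factor-system axioms applied to the gates $\mathfrak g_h(w_i)$.
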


We remark that Theorem \ref{thm: 3rd theorem in the intro} does not assume that $X$ is uniformly locally finite.  
In \cite{HypInCube}, Genevois introduced a collection of Gromov hyperbolic metric spaces $(Y_n, d_n)$ which generalize the contact graph, see Subsection \ref{subsec: Gen's graph}. As an application to Theorem \ref{thm: 3rd theorem in the intro}, we show the following.

\begin{introcor}\label{cor:final statement of the intro}  Let $X$ be a cocompact CAT(0) cube complex with a factor system. There exists a Gromov hyperbolic space $(Y,d_Y)$ and a projection map $p:X \rightarrow Y$ such that for any $\kappa$-contracting geodesic ray $b$, we have  $d_Y(p(b(0)), p(b(t))) \geq \frac{t}{\dd_1\kappa(t)}-\dd_2$ for some constants $\dd_1,\dd_2$ depending only on $b$. In particular, $\kappa$-contracting geodesic rays project to infinite diameter subsets of $Y.$ Furthermore, when $X$ is the Salvetti complex of a right-angled Artin group, the space $Y$ is the contact graph of $X$.

\end{introcor}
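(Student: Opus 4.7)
The plan is to combine the hyperplane characterization (Theorem \ref{thm: 3rd theorem in the intro}) with Genevois's family of Gromov hyperbolic graphs $Y_n$ recalled in Subsection \ref{subsec: Gen's graph}. Since $X$ carries a factor system, the final clause of Theorem \ref{thm: 3rd theorem in the intro} yields a uniform constant $\cc'$ with the property that every $\kappa$-contracting ray crosses a hyperplane sequence whose consecutive members are $\cc'$-well-separated. I would take $Y := Y_{\cc'}$, which is Gromov hyperbolic by \cite{HypInCube}, and define $p\colon X \to Y$ by sending $x$ to any hyperplane whose carrier contains $x$; two such choices differ by at most one edge in $Y$, so the choice of representative only affects the additive constant $\dd_2$.

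For a $\kappa$-contracting ray $b$, Theorem \ref{thm: 3rd theorem in the intro} produces hyperplanes $h_1, h_2, \dots$ crossed at times $t_1 < t_2 < \cdots$ with $t_{i+1} - t_i \leq \cc \kappa(t_{i+1})$ and consecutive $h_i, h_{i+1}$ adjacent in $Y$. A straightforward telescoping argument, using monotonicity and sublinearity of $\kappa$, shows that the count $n(t)$ of such hyperplanes crossed up to parameter $t$ satisfies $n(t) \geq t/(\cc \kappa(t)) - O(1)$. In particular $p(b(t))$ is $Y$-close to $h_{n(t)}$.

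The heart of the proof is to establish the reverse inequality $d_Y(h_i, h_j) \geq |i-j|/M - M'$, that is, that the sequence $(h_i)$ is a quasi-geodesic in $Y$, for constants $M, M'$ depending only on $\cc'$ and the factor system constants. I would argue as follows: given any $Y$-geodesic $h_i = g_0, g_1, \dots, g_m = h_j$, each pair $(g_l, g_{l+1})$ is $\cc'$-well-separated, and in the presence of a factor system this uniformly bounds the size of any antichain of hyperplanes simultaneously transverse to both $g_l$ and $g_{l+1}$. Since the $h_k$ are pairwise disjoint and linearly ordered along $b$, only a bounded number of them can be ``covered'' by each edge of the $Y$-geodesic, producing $|i-j| \leq Mm$. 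This combinatorial counting is the principal obstacle and is where the factor system hypothesis is used in an essential way.

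Putting the pieces together gives $d_Y(p(b(0)), p(b(t))) \geq d_Y(h_1, h_{n(t)}) - 2 \geq n(t)/M - O(1) \geq t/(M\cc\,\kappa(t)) - \dd_2$, establishing the quantitative bound with $\dd_1 = M\cc$; the infinite-diameter statement follows at once since $t/\kappa(t) \to \infty$ for sublinear $\kappa$. For the final claim, one verifies directly that in the Salvetti complex of a right-angled Artin group $\cc'$-well-separated hyperplanes are at bounded distance in the contact graph (and vice versa), so $Y_{\cc'}$ is naturally quasi-isometric to the contact graph, and the projection may be post-composed with this quasi-isometry without disturbing the conclusion.
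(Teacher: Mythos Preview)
Your overall strategy---use Theorem \ref{thm: 3rd theorem in the intro} to produce the hyperplane sequence, invoke the factor system for a uniform well-separation constant $\cc'$, take a Genevois space as the hyperbolic target, and count hyperplanes via telescoping---matches the paper. But you have misidentified the space $Y_k$, and this sends the middle of your argument off course.

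In the paper (Definition \ref{def:Morse-detecting space}), the points of $Y_k$ are the \emph{vertices of $X$}, not hyperplanes; the distance $d_k(x,y)$ is by definition the maximal cardinality of a collection of pairwise $k$-well-separated hyperplanes separating $x$ from $y$, and the projection $q\colon X\to Y_k$ just sends a point to a vertex of the cube containing it (Remark \ref{remark: projection to vertices}). With this definition your entire ``reverse inequality'' step evaporates. The paper simply observes that since $h_{i+1}$ separates $h_i$ from $h_{i+2}$, any hyperplane meeting both $h_i$ and $h_{i+2}$ already meets $h_i$ and $h_{i+1}$; by induction \emph{all} the $h_j$ are pairwise $\cc$-well-separated, not only consecutive ones. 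Then $h_1,\dots,h_{n(t)-1}$ is itself a pairwise $\cc$-well-separated family separating $b(0)$ from $b(t)$, so $d_k(q(b(0)),q(b(t)))\geq n(t)-O(1)$ directly from the definition of $d_k$. No $Y$-geodesic needs to be analyzed.

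Your sketch of the reverse inequality also has a genuine gap in your model. An edge of your graph joins two $\cc'$-well-separated (hence disjoint) hyperplanes $g_l,g_{l+1}$, but there is no continuous path in $X$ running through their carriers, unlike the contact-graph argument in Lemma \ref{lem:projection length}. A hyperplane $h_k$ separating $h_i=g_0$ from $h_j=g_m$ need not intersect any $g_l$; it may instead \emph{separate} some $g_l$ from $g_{l+1}$, and the $\cc'$-well-separation of $g_l,g_{l+1}$ bounds only hyperplanes crossing both, not hyperplanes lying between them. So ``each edge covers boundedly many $h_k$'' is not justified. For the Salvetti clause, the paper does not pass through a quasi-isometry $Y_{\cc'}\simeq \mathcal C X$; it argues directly in the contact graph via Lemma \ref{lem:projection length}, using that in a Salvetti complex well-separated implies strongly separated (Corollary \ref{hyperplane crossings in RAAG}).
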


%
%
%

\subsection{Separated vs well-separated hyperplanes}
In a \CAT cube complex there are many kinds of ``separation'' type properties. Two hyperplanes $h_1,h_2$ are said to be $k$-separated if the number of hyperplanes crossing them both is bounded above by $k$. They are $k$-well-separated if any set of hyperplanes crossing both, but contains no facing triples, has size bounded above by $k$. The notion of well-separated hyperplanes was introduced by Genevois, for example, see \cite{Genevois2020}.

A geodesic ray $b$ is said to be \emph{$D$-uniformly contracting} if there exists a constant $D$ such that projections of disjoint balls to $b$ have diameters uniformly bounded above by $D$. In \cite{ChSu2014}, Charney and Sultan characterized uniformly contracting geodesic rays as those which cross an infinite sequence of hyperplanes $\{h_i\}^{\infty}_{i=1}$ at points $t_i$ where $d(b(t_i), b(t_{i+1}))$ is uniformly bounded above by a constant $r$ and each two consecutive hyperplanes $h_i,h_{i+1}$ are uniformly $k$-separated for some $k \geq0$.

A natural question to ask is whether $\kappa$-contracting geodesic rays can be similarly described using the simpler $k$-separated notion. In other words, it is reasonable to expect that $\kappa$-contracting geodesic rays are characterized as ones which cross an infinite sequence of hyperplanes $\{h_i\}_{i=1}^{\infty}$ at points $t_i$ where $d(b(t_i), b(t_{i+1}))$ grows like $\kappa(t_i)$ and the separation of each two consecutive hyperplanes $h_i,h_{i+1}$ grows like $\kappa(t_i)$. This is, in fact, not the case. We give an example (Example~\ref{example: counter example}) of a $\log_2$-contracting geodesic ray where the separation grows linearly (in particular, faster than every sublinear function). More precisely, we show the following.

\begin{introthm}\label{theorem:counter example in the intro}
There is a uniformly locally finite CAT(0) cube complex $X$ and a $\log_2$-contracting geodesic ray $b \subset X$ such that the following holds: For any sublinear function $\kappa$, and any collection of hyperplanes $\{h_i\}_{i=1}^{\infty}$ crossing $b$ at $t_i$, if $d(b(t_{i-1}),b(t_{i})) \leq \cc \kappa(t_i)$ for some $\cc \geq 0$, then $$\displaystyle \lim_{t_i \to \infty} \frac{\#\{ \text{hyperplanes }h \; | \; h \text{ intersects } h_{i-1} \text{ and } h_i \}}{t_i} \geq  \frac{1}{2}.$$ In particular, the number of hyperplanes crossing both $h_{i-1}, h_{i}$ grows faster than every sublinear function of $t_i.$
\end{introthm}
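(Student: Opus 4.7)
My plan is to construct an explicit uniformly locally finite CAT(0) cube complex $X$ and a $\log_2$-contracting ray $b\subset X$ that exploits the distinction, built into the definition of well-separated hyperplanes, between the raw count of hyperplanes crossing two fixed hyperplanes and the size of a maximal facing-triple-free subset. The conceptual point is that a pair of hyperplanes can be $k$-well-separated with $k$ small even when many hyperplanes cross both, provided these extras organize themselves into facing triples. This leaves room for a construction in which $b$ is $\log_2$-contracting (witnessed via a sparsely well-separated sub-sequence through Theorem~B) while any hyperplane sequence crossing $b$ with sublinear spacing accumulates linearly many common crossings.

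For the construction, I would equip $X$ with two types of hyperplanes. A family $\{H_n\}_{n\ge 1}$ of \emph{primary} hyperplanes crosses $b$ transversely at the integer points, while a family $\{P_j\}$ of \emph{secondary} hyperplanes does not cross $b$ but each crosses many of the $H_n$'s. The secondary hyperplanes are arranged in a tree-like pattern relative to one another, so that large sub-collections must contain facing triples. At each position $t$ along $b$, at least $t/2$ of the $P_j$'s have spans that cover a long interval around $t$; uniform local finiteness is maintained by dispersing the dual edges of the $P_j$'s along $b$'s neighborhood at different ``heights,'' rather than concentrating them at any single vertex.

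Granted such an $X$, the proof splits into two verifications. To show that $b$ is $\log_2$-contracting, I would apply Theorem~B to the sub-sequence $\{H_{n_k}\}$ with $n_{k+1}-n_k=\Theta(\log_2 n_k)$: the tree-like structure of $\{P_j\}$ forces every facing-triple-free collection of hyperplanes crossing both $H_{n_{k-1}}$ and $H_{n_k}$ to have size $O(\log_2 n_k)$, so these witnesses are $O(\log_2 n_k)$-well-separated. To prove the linear count, I would observe that for any hyperplane sequence $\{h_i\}$ with $d(b(t_{i-1}),b(t_i))\le c\kappa(t_i)$, every secondary hyperplane $P_j$ whose span covers $[t_{i-1},t_i]$ crosses both $h_{i-1}$ and $h_i$; by construction at least $t_i/2$ such $P_j$'s exist, yielding the desired lower bound.

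The main obstacle is engineering the secondary family to satisfy three competing requirements simultaneously: enough $P_j$'s to produce linearly many crossings at every scale; a tree-like arrangement forcing large facing-triple-free sub-collections to be small; and uniform local finiteness of the underlying cube complex. The first and third are in tension, and the resolution is that the $P_j$'s run parallel to $b$ rather than transversely, so their dual edges can be spread across many vertices without concentrating at any one. Verifying that the prescribed hyperplane system satisfies Sageev's consistency axioms and produces a genuine CAT(0) cube complex in which all the asserted crossings occur is the most delicate combinatorial step, and I would expect this bookkeeping to be the principal technical burden of the argument.
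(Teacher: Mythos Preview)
Your conceptual picture is right and matches the paper's mechanism exactly: a ray $b$ crossed transversely by ``primary'' hyperplanes, together with a tree-organized family of ``secondary'' hyperplanes parallel to $b$, so that many secondaries cross any consecutive pair of primaries while any facing-triple-free subfamily stays logarithmically small. Where your plan and the paper diverge is in execution, in two places worth noting.

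First, the paper avoids the Sageev-axiom bookkeeping you flag as the main burden by writing the complex down directly as a product-and-glue: set $C_n=\Gamma_n\times[0,2^n]$ with $\Gamma_n$ the depth-$n$ rooted binary tree, and glue $\Gamma_n\times\{2^n\}$ to $\Gamma_{n+1}\times\{0\}$ along the natural inclusion $\Gamma_n\hookrightarrow\Gamma_{n+1}$. The ray $b$ is the root line. The interval-direction hyperplanes are your primaries; the tree-edge hyperplanes are your secondaries, and a tree-edge hyperplane born in $C_m$ persists through every $C_n$ with $n\ge m$. Uniform local finiteness is immediate (valence is bounded by a fixed constant), and there is nothing to verify beyond the fact that a product of \CAT complexes is \CAT. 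The count then falls out: if $h_{i-1}$ lies in $C_n$ then exactly $2^{n+1}-2$ tree-hyperplanes cross both $h_{i-1}$ and $h_i$, while $t_{i-1}\le 2^{n+1}-1$, and the sublinear spacing forces $t_i\le 2t_{i-1}$ eventually, yielding the ratio $\ge\tfrac12$.

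Second, the paper does \emph{not} use Theorem~B to certify that $b$ is $\log_2$-contracting; it verifies the contracting inequality directly from the geometry (every point of $C_n$ is within $n$ of $b$, and $b(t)\in C_n$ forces $n\le\log_2(t+1)$). Your route through Theorem~B would work, but it is heavier and somewhat circular in the paper's logic, since this example is precisely what motivates replacing separation by well-separation in the statement of Theorem~B. The direct argument is a couple of lines and is the cleaner choice here.
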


To understand the difference between $k$-separation and $k$-well-separation it is useful to see how these are related to the volume and diameter of a convex set. For example, consider a ball of radius $D$ given by $B_D$ in the Cayley graph of $F_2$. The number of hyperplanes meeting such a ball is about $4.3^D$ while the diameter of this set is $2D$. Notice however, that the number of hyperplanes meeting $B_D$ which contain no facing triple is $2D$. In a quantifiable way, the number of hyperplanes that meet a convex set measures the sets volume, while the number of hyperplanes that meet the set \emph{and} contain no facing triples measures the diameter.

Since the $\kappa$-contracting property is a statement about the diameter of projections and not the volume, the notion of well-separation will be better suited for our characterization (in the special case of uniform contraction, bounding the diameter of the projection is equivalent to bounding its volume and the separation of hyperplanes is the simpler combinatorial property to use, hence why it was used in \cite{ChSu2014}).

For a slightly less trivial example, let $B_D$ be as above and consider the space $C_D=B_D \times \mathbb{R}^{+}$. Let $b$ be the geodesic ray starting in $C_D$ at $e$ and going in the $\mathbb{R}^{+}$ direction of $C_D$. This geodesic ray is $D$-uniformly contracting while every two hyperplanes $h_1,h_2$ crossed by $b$ are $4.3^D$-separated. Nonetheless, $h_1,h_2$ are $D$-well-separated. As illustrated in this example, the well-separation notion inter-plays with contraction in a linear fashion whereas separation does not. Since we are trying to understand $\kappa$-contracting geodesic rays where the contraction grows as the sublinear function $\kappa$, the well-separation notion is the appropriate one to use.

\subsection{History}  

Divergence rose in the study of non-positively curved manifolds and metric spaces as a measure of how fast geodesic rays travel away from each other. In particular, Gromov conjectured that all pairs of geodesic rays in the universal cover of a closed Riemannian manifold of non-positive curvature diverge either linearly or exponentially \cite{Grom}. Gersten \cite{Ger} provided the first examples of CAT(0) spaces whose divergence did not satisfy the linear/exponential dichotomy and showed that such examples are closely tied to other areas in mathematics. Duchin and Rafi \cite{DuchinRafi} show that both Teichmuller space (with the Teichmuller metric) and the mapping class group (with a word metric) have geodesic divergence that is intermediate between the linear rate of flat spaces and the exponential rate of hyperbolic spaces. Algom-Kfir \cite{AlgomKfir2011} shows the divergence function in $CV_{n}$ is at least quadratic. In \cite{Levcovitz2018}, Levcovitz gave a classification of all right-angled Coxeter groups with quadratic divergence and showed right-angled Coxeter groups cannot exhibit a divergence function between quadratic and cubic. More recently, Arzhantseva-Cashen-Gruber-Hume \cite{Cashen2016}  shows that Morse geodesic rays in proper metric spaces have  superlinear divergences. 

Another motivation for studying collections of geodesic rays is to introduce a boundary at infinity. The topology and dynamics of these boundaries often provides information about the geometry of the space and the algebraic behaviour of groups acting on the space.  For CAT(0) spaces, there are a number of boundaries, the visual boundary, the Tits boundary, and more recently the contracting boundary and the $\kappa$-contracting boundary (see \cite{Merlin} by Incerti-Medici on comparing such boundaries). Each boundary has its own advantages and disadvantages, the visual and Tits boundaries give a lot of information about the product structure and maximal flats in a space, however, neither of them are quasi-isometric invariants as shown by Croke and Kleiner \cite{CKexample}. This limits their use in studying some aspects of CAT(0) groups. The contracting boundary (\cite{ChSu2014} for \CAT space and \cite{Cordes} by Cordes for proper metric spaces) is a quasi-isometry invariant, however it is not a metrizable space, and in a probabilistic sense contracting geodesic rays are rare in CAT(0) spaces that contain flats \cite{CDG}. The $\kappa$-boundary was introduced by Qing, Rafi and Tiozzo in \cite{QRT19} and sought to rectify some of the shortcomings found in the these other boundaries. They show that the $\kappa$-boundary is metrizable and invariant under quasi-isometries. Moreover, in \cite{QZ19} Qing and Zalloum further studied the topology and dynamics of this boundary for \CAT spaces. For example, they show that the $\kappa$-boundary is a strong visibility space and that rank one isometries have dense axes in the $\kappa$-boundary. In fact, every proper geodesic space has a $\kappa$-boundary that is QI-invariant and metrizable, and in the case of mapping class groups it can serve as Poisson boundaries as well \cite{QRT20}.

\subsection*{Further Questions}

\begin{question} 
\emph{For CAT(0) spaces, the notion of $\kappa$-Morse and $\kappa$-contracting are equivalent, but they are different in general. For a proper geodesic metric space can sublinearly Morse (quasi)-geodesics be characterized in terms of having superlinear $\kappa$-lower divergence?}

\end{question}

\begin{question}
\emph{Can one show the following. Let $X$ be a CAT(0) cube complex and let $b \in X$ be a geodesic ray. If no hyperplane $h$ has an infinite projection on $b$, then $b$ is sublinearly contracting ($\kappa$-contracting for some $\kappa$) if and only if it makes a sublinear progress in the contact graph of $X$}.
\end{question}

\begin{question}
 \emph{We claim that Corollary \ref{cor:final statement of the intro} implies the existence of a map $i: \partial_\kappa X \rightarrow \partial Y$ between the $\kappa$-boundary of a cocompact CAT(0) cube complex $X$ with a factor system to the Gromov boundary of the corresponding hyperbolic space $Y$. Using different tools  \cite{Abbot} have shown that an analogous map the Morse boundary of a hierarchically hyperbolic space and the boundary of a hyperbolic space $Y$. Further, they showed that this map is continuous and 1-1 \cite{Abbot}. Can one show that $i: \partial_\kappa X \rightarrow \partial Y$ is a well-defined continuous injection?}
\end{question}

%
%
%
%
%

\subsection*{Outline of the paper} In Section~2, we review \CAT geometry and $\kappa$ contracting geodesic rays. In section 3 we introduce the notion of $\kappa$-lower divergence prove Theorem $\ref{thm: first intro thm}.$ In section 4, we review the combinatorics and geometry of \CAT cube complexes, and prove Theorem \ref{theorem:counter example in the intro}, Theorem \ref{thm: 3rd theorem in the intro} and Corollary~\ref{cor:final statement of the intro}.

\subsection*{Acknowledgement.} 
The authors would like to thank Anthony Genevois, Mark Hagen, Merlin Incerti-Medici, Kasra Rafi, Michah Sageev and Giulio Tiozzo for helpful discussions.  We especially thank Kasra Rafi for suggesting the definition of $\kappa$-lower divergence and Anthony Genevois for fruitful comments on an earlier draft of the paper. The third author is very grateful to Mark Hagen for teaching him numerous cube complex techniques which were very helpful for the development of this paper. The authors would like to thank the anonymous referee(s).


\section{Preliminaries}

\subsection{\CAT geometry}
%
%
%
%
%
%

\begin{definition}[Quasi-geodesics] \label{Def:Quadi-Geodesic}

A \emph{geodesic ray} in $X$ is an isometric embedding $b : [0, \infty) \to X$. We fix a base-point $\go \in X$ and always assume 
that $b(0) = \go$, that is, a geodesic ray is always assumed to start from 
this fixed base-point. A \emph{quasi-geodesic ray} is a continuous quasi-isometric 
embedding $\beta : [0, \infty) \to X$ starting from $\go$. 
The additional assumption that quasi-geodesics are continuous is not necessary,
but it is added to make the proofs simpler. 

\end{definition}

\subsection{Basic properties of \CAT spaces}
A proper geodesic metric space $(X, d_{X})$ is \CAT if geodesic triangles in $X$ are at 
least as thin as triangles in Euclidean space with the same side lengths. To be precise, for any 
given geodesic triangle $\triangle pqr$, consider the unique triangle 
$\triangle \overline p \overline q \overline r$ in the Euclidean plane with the same side 
lengths. For any pair of points on the triangle, for instance on edges $[p,q]$ and $[p, r]$ of the 
triangle $\triangle pqr$, if we choose points $\overline x$ and $\overline y$  on 
edges $[\overline p, \overline q]$ and $[\overline p, \overline r]$ of 
the triangle $\triangle \overline p \overline q \overline r$ so that 
$d_X(p,x) = d_{\mathbb{E}^2}(\overline p, \overline x)$ and 
$d_X(p,y) = d_{\mathbb{E}^2}(\overline p, \overline y)$ then,
\[ 
d_{X} (x, y) \leq d_{\mathbb{E}^{2}}(\overline x, \overline y).
\] 

For the remainder of the paper, we assume $(X, d)$ is a proper \CAT space. A metric space $X$ is {\it proper} if closed metric balls are compact.
Here, we list some properties of proper \CAT spaces that are needed later (see 
 \cite{CAT(0)reference}). 

\begin{lemma} \label{Lem:CAT} 
 A proper \CAT space $X$ has the following properties:
\begin{enumerate}
\item It is uniquely geodesic, that is, for any two points $x, y$ in $X$, 
there exists exactly one geodesic connecting them. 

\item The nearest-point projection from a point $x$ to a geodesic line $b$ 
is a unique point denoted $x_b$. In fact, the closest-point projection map
\[
\pi_b : X \to b
\]
is Lipschitz. 
\item Convexity: For any convex set $Z \in X$, the distance function $f:X \rightarrow \mathbb{R}^+$ given by $f(x)=d(x,Z)$ is convex in the sense of \cite{BH1} II.2.1.
\end{enumerate}
\end{lemma}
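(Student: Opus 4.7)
The plan is to derive each item directly from the defining CAT(0) inequality applied to suitably chosen comparison triangles in $\mathbb{E}^2$. All three statements are classical and appear in \cite{CAT(0)reference}; the proposal simply organizes the standard arguments.

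For (1), I would assume two geodesics $c_1, c_2$ both connect $x$ to $y$ and set up a degenerate comparison triangle with sides $c_1$, $c_2$, and the constant path at $y$. The Euclidean comparison triangle collapses to the segment from $\overline{x}$ to $\overline{y}$, so equal-parameter comparison points coincide. The CAT(0) inequality then forces $d(c_1(t), c_2(t)) = 0$ for all $t$, giving uniqueness.

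For (2), existence of the projection follows from properness: the map $s \mapsto d(x, b(s))$ is continuous, proper, and bounded below, so it attains its infimum. If two minima $p \ne q$ existed, let $m$ be the midpoint of the geodesic $[p,q]$; in the Euclidean comparison triangle $\triangle \overline{x}\,\overline{p}\,\overline{q}$ one has $d_{\mathbb{E}^2}(\overline{x}, \overline{m}) < d_{\mathbb{E}^2}(\overline{x}, \overline{p})$ strictly, and the CAT(0) inequality transports this to $X$, contradicting minimality of $d(x,p)$. For the Lipschitz claim I would use the ``obtuse angle'' characterization of the projection: the Alexandrov angle at $x_b$ in the triangle $\triangle x\, x_b\, y_b$ is at least $\pi/2$, and similarly at $y_b$. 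Comparing against a Euclidean quadrilateral whose base angles at $\overline{x_b}$ and $\overline{y_b}$ are both right angles then forces $d(x_b, y_b) \le d(x,y)$, so $\pi_b$ is $1$-Lipschitz.

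For (3), let $\gamma : [0,1] \to X$ be the constant-speed geodesic from $x$ to $y$, let $p_x, p_y$ be projections of $x, y$ onto $Z$, and let $\eta : [0,1] \to X$ be the constant-speed geodesic from $p_x$ to $p_y$, which lies in $Z$ by convexity. Splitting the quadrilateral $x, y, p_y, p_x$ along a diagonal into two triangles and applying the CAT(0) inequality to each against its Euclidean comparison yields $d(\gamma(t), \eta(t)) \le (1-t)\, d(x, p_x) + t\, d(y, p_y)$. Since $\eta(t) \in Z$, this bounds $d(\gamma(t), Z)$ and gives convexity of $f(x) = d(x, Z)$ in the sense of \cite{BH1} II.2.1. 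The main obstacle is the Lipschitz part of (2), which genuinely needs the Alexandrov angle machinery rather than a single direct application of the defining inequality; the other items follow immediately once the correct comparison configuration is chosen.
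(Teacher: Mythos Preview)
Your proof sketches are correct and follow the standard arguments in \cite{CAT(0)reference}. Note, however, that the paper does not actually prove this lemma: it simply states the three properties and cites the reference, treating them as well-known background facts. So there is no ``paper's own proof'' to compare against; you have supplied what the paper deliberately omitted.
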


\subsection{$\kappa$--contracting geodesics of $X$}
In this section we review the definition and properties of $\kappa$--contracting geodesic rays needed for this paper, for further details see \cite{QRT19}. We fix a function 
\[
\kappa : [0,\infty) \to [1,\infty)
\] 
that is monotone increasing, concave and sublinear, that is
\[
\lim_{t \to \infty} \frac{\kappa(t)} t = 0. 
\]
Note that using concavity, for any $a>1$, we have
\begin{equation} \label{Eq:Concave}
\kappa(a t) \leq a \left( \frac 1a \, \kappa (a t) + \left(1- \frac 1a\right) \kappa(0) \right) 
\leq a \, \kappa(t).
\end{equation}


The assumption that $\kappa$ is increasing and concave makes certain arguments
cleaner but they are not necessary. One can always replace any 
sublinear function $\kappa$, with another sublinear function $\overline \kappa$
so that $\kappa(t) \leq \overline \kappa(t) \leq \sC \, \kappa(t)$ for some constant $\sC$ 
and $\overline \kappa$ is monotone increasing and concave. For example, define 
\[
\overline \kappa(t) = \sup \Big\{ \lambda \kappa(u) + (1-\lambda) \kappa(v) \ST 
\ 0 \leq \lambda \leq 1, \ u,v>0, \ \text{and}\ \lambda u + (1-\lambda)v =t \Big\}.
\]
The requirement $\kappa(t) \geq 1$ is there to remove additive errors in the definition
of $\kappa$--contracting geodesics.  

\begin{definition}[$\kappa$--neighborhood]  \label{Def:Neighborhood} 
For a closed set $Z$ ( here $Z$ can be taken as  either a geodesic or a quasi-geodesic ray) and a constant $\nn$ define the $(\kappa, \nn)$--neighbourhood 
of $Z$ to be 
\[
\calN_\kappa(Z, \nn) = \Big\{ x \in X \ST 
  d_X(x, Z) \leq  \nn \cdot \kappa(x)  \Big\}.
\]

\end{definition} 

\begin{figure}[h!]
\begin{tikzpicture}
 \tikzstyle{vertex} =[circle,draw,fill=black,thick, inner sep=0pt,minimum size=.5 mm] 
[thick, 
    scale=1,
    vertex/.style={circle,draw,fill=black,thick,
                   inner sep=0pt,minimum size= .5 mm},
                  
      trans/.style={thick,->, shorten >=6pt,shorten <=6pt,>=stealth},
   ]

 \node[vertex] (a) at (0,0) {};
 \node at (-0.2,0) {$\go$};
 \node (b) at (10, 0) {};
 \node at (10.6, 0) {$b$};
 \node (c) at (6.7, 2) {};
 \node[vertex] (d) at (6.68,2) {};
 \node at (6.7, 2.4){$x$};
 \node[vertex] (e) at (6.68,0) {};
 \node at (6.7, -0.5){$x_{b}$};
 \draw [-,dashed](d)--(e);
 \draw [-,dashed](a)--(d);
 \draw [decorate,decoration={brace,amplitude=10pt},xshift=0pt,yshift=0pt]
  (6.7,2) -- (6.7,0)  node [black,midway,xshift=0pt,yshift=0pt] {};

 \node at (7.8, 1.2){$\nn \cdot \kappa(x)$};
 \node at (3.6, 0.7){$||x||$};
 \draw [thick, ->](a)--(b);
 \path[thick, densely dotted](0,0.5) edge [bend left=12] (c);
\node at (1.4, 1.9){$(\kappa, \nn)$--neighbourhood of $b$};
\end{tikzpicture}
\caption{A $\kappa$-neighbourhood of a geodesic ray $b$ with multiplicative constant $\nn$.}
\end{figure}
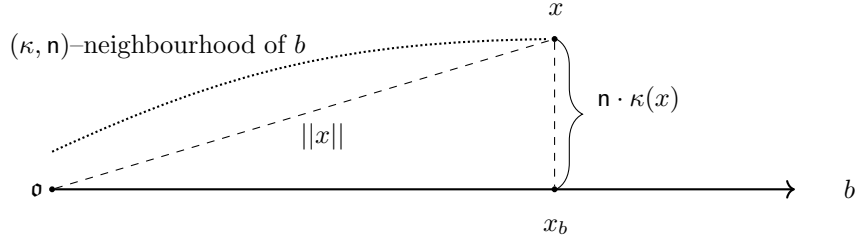


Lastly, for $x \in X$, define $\Norm{x} = d_X(\go, x)$. When used inside the function $\kappa(t)$, we drop $\Norm{\cdot}$ and write 
\[
\kappa(x) := \kappa(\Norm{x}). 
\]

\begin{definition}[$\kappa$--Morse] \label{Def:Morse} 
A geodesic ray $b$ is \emph{$\kappa$--Morse} if there is a function
\[
\mm_b : \RR_+^2 \to \RR_+
\]
so that if $\beta : [s,t] \to X$ is a $(\qq, \sQ)$--quasi-geodesic with end points 
on $b$ then
\[
\beta[s,t]  \subset \calN_{\kappa} \big(b,  \mm_b(\qq, \sQ)\big). 
\]
We refer to $\mm_{b}$ as the \emph{Morse gauge} for $b$.
\end{definition} 

\begin{definition}[$\kappa$--contracting] \label{Def:Contracting}
For a geodesic ray $b$ of $X$, we say $b$ is \emph{$\kappa$--contracting} if there 
is a constant $\cc_b$ so that, for every $x,y \in X$
\[
d_X(x, y) \leq d_X( x, b) \quad \Longrightarrow \quad
diam_X \big( x_b \cup y_b \big) \leq \cc_b \cdot \kappa( x).
\]
In the special case where $\kappa=1,$ we say that $b$ is \emph{uniformly contracting}.
\end{definition} 

In a CAT(0) space $X$, the notions of $\kappa$-Morse and $\kappa$-contracting are indeed equivalent.

\begin{theorem}[Theorem 3.8 \cite{QRT19}] \label{thm: ka Morse iff contracting}
Let $X$ be a CAT(0) space and let $b \in X$ be a geodesic ray. Then, b is $\kappa$-Morse if and only if it is $\kappa$-contracting.

\end{theorem}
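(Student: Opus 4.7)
The plan is to prove both implications using the $1$-Lipschitz nearest-point projection $\pi_b : X \to b$ from Lemma~\ref{Lem:CAT} together with the convexity of the distance-to-$b$ function.

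\medskip

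For the direction $\kappa$-contracting $\Rightarrow$ $\kappa$-Morse, I would fix a $(q,Q)$-quasi-geodesic $\beta : [s,t] \to X$ with endpoints on $b$, let $D$ be the maximum of $u \mapsto d(\beta(u), b)$, realized at $z = \beta(u_0)$, and isolate the maximal sub-interval $[u_1, u_2] \ni u_0$ on which $d(\beta(u), b) \geq D/2$. On this sub-interval each ball $B_{D/2}(\beta(u))$ is disjoint from $b$, so the $\kappa$-contracting hypothesis bounds the diameter of its $\pi_b$-image by $\cc\,\kappa(\beta(u))$. A covering argument along $\beta([u_1, u_2])$ by $O(q)$ such balls, followed by the triangle inequality on the projections, yields
\[
d\bigl(\pi_b(\beta(u_1)),\,\pi_b(\beta(u_2))\bigr) \leq \cc'\,\kappa(z),
\]
for a constant $\cc'$ depending only on $q, Q, \cc$. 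On the wings $[s,u_1]$ and $[u_2,t]$, where $d(\beta(\cdot),b) < D/2$, the $1$-Lipschitz property of $\pi_b$ controls the additional $b$-excursion by $O(D)$, so altogether $d(\beta(s),\beta(t)) = O(D + \kappa(z))$. On the other hand, the quasi-isometric embedding condition forces $d(\beta(s),\beta(t)) \gtrsim D/q^2$ because $\beta$ must travel from $b$ to $z$ and back. Comparing the two bounds yields $D \leq m(q,Q,\cc)\,\kappa(z)$, establishing the $\kappa$-Morse property with gauge $m_b(q,Q) := m(q,Q,\cc_b)$.

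\medskip

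For the converse $\kappa$-Morse $\Rightarrow$ $\kappa$-contracting, suppose $b$ is $\kappa$-Morse with gauge $m_b$ and let $x,y \in X$ satisfy $d(x,y) \leq d(x,b)$; set $D = d(\pi_b(x), \pi_b(y))$. First, using CAT(0) convexity of the two-variable distance function $(s,t) \mapsto d(\alpha(s), \beta(t))$ applied to the perpendicular geodesics $[x,\pi_b(x)]$ and $[y,\pi_b(y)]$, I would replace $x,y$ by points $x',y'$ on these perpendiculars so that $\pi_b(x')=\pi_b(x)$, $\pi_b(y')=\pi_b(y)$, $d(x',y')\leq d(x',b)$, and $d(x',b) \leq C\,D$ for a universal $C$. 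Then form the piecewise geodesic
\[
\gamma = [\pi_b(x), x'] \cup [x', y'] \cup [y', \pi_b(y)],
\]
whose endpoints lie on $b$. The CAT(0) angle condition at $\pi_b(x),\pi_b(y)$ combined with the uniform ratio $d(x',b)/D \leq C$ makes $\gamma$ a $(q,Q)$-quasi-geodesic with \emph{universal} constants $q,Q$. Applying $\kappa$-Morse to $\gamma$, the point $x'$ lies in $\calN_\kappa(b, m_b(q,Q))$, so $d(x',b) \leq m_b(q,Q)\,\kappa(x')$. Since $D \leq d(x',b)$ and $\kappa(x') \leq 2\,\kappa(x)$ by (\ref{Eq:Concave}) (using $\|x'\| \leq 2\|x\|$), one concludes $D \leq 2\,m_b(q,Q)\,\kappa(x)$, which is the desired contracting bound with $\cc_b = 2\,m_b(q,Q)$.

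\medskip

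The main obstacle is the ``ratio reduction'' in the second direction: verifying that one can shrink $x,y$ toward their projections so that $d(x',b)$ becomes comparable to $D$ without losing the constraint $d(x',y') \leq d(x',b)$. This relies on the two-variable convexity of the distance function along the perpendicular geodesics, which ensures that $d(x',y')$ interpolates convexly between $d(x,y)$ and $D$ as the parameters move, so a suitable joint scaling achieves the desired ratio. The other delicate aspect, present in both directions, is the careful bookkeeping with $\kappa$ evaluated at different basepoints ($z$, $x$, $x'$, etc.); at each transfer one must invoke the concavity inequality (\ref{Eq:Concave}) to move the evaluation point without spoiling the sublinear growth of the constants.
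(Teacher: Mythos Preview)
The paper does not prove this theorem; it is quoted verbatim as Theorem~3.8 of \cite{QRT19} and used as a black box. So there is no ``paper's proof'' to compare against, and your proposal should be read as an attempt to reproduce the argument from \cite{QRT19}.

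Your treatment of $\kappa$-Morse $\Rightarrow$ $\kappa$-contracting is essentially the argument of \cite{QRT19}: the ratio-reduction step together with the piecewise geodesic $\gamma$ is exactly Proposition~\ref{Proposition: Constructing a close quasi-geodesic} (Proposition~3.11 of \cite{QRT19}), which the present paper states and then uses in Lemma~\ref{lem: contracting implies projection contracting}. One correction: you write ``since $D \leq d(x',b)$'', but the construction only gives $D \leq C\, d(x',b)$ for a fixed universal $C$ (the $80$ in Proposition~\ref{Proposition: Constructing a close quasi-geodesic}); this is harmless but should be stated correctly. Also, the verification that $\gamma$ is a $(q,Q)$-quasi-geodesic with \emph{universal} constants is the real work in that proposition and cannot be waved through; your sketch should acknowledge that this step is where the CAT(0) comparison inequalities are actually used.

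Your treatment of $\kappa$-contracting $\Rightarrow$ $\kappa$-Morse has a genuine gap. The claim that $\beta([u_1,u_2])$ is covered by ``$O(q)$ such balls'' of radius $D/2$ is unjustified: the number of balls needed is proportional to the length of $\beta|_{[u_1,u_2]}$ divided by $D$, and nothing in your setup bounds that length by a constant multiple of $D$. Consequently your two displayed estimates, $d(\beta(s),\beta(t)) = O(D + \kappa(z))$ and $d(\beta(s),\beta(t)) \gtrsim D/q^2$, are mutually compatible for arbitrarily large $D$ and do not force $D \lesssim \kappa(z)$. (Indeed, the first of these is false in general: the wings $[s,u_1]$ and $[u_2,t]$ can have arbitrarily long $b$-projection, so $d(\beta(s),\beta(t))$ is not controlled by $D$.) The standard argument instead covers $\beta|_{[u_1,u_2]}$ by roughly $|u_2-u_1|/D$ balls, bounds $d(\pi_b\beta(u_1),\pi_b\beta(u_2))$ by that count times $\cc\,\kappa$, feeds this back into $d(\beta(u_1),\beta(u_2)) \le D + d(\pi_b\beta(u_1),\pi_b\beta(u_2))$, and then uses the quasi-geodesic inequality on $[u_1,u_2]$ (not on $[s,t]$) to obtain a self-referential inequality in $|u_2-u_1|$ and $D$ that collapses to $D \lesssim \kappa$. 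You should rework this direction along those lines; as written, it does not close.
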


Our  next  goal  is  to  prove  that  the  notion  of $\kappa$-contracting is equivalent to another notion of contraction. In  order  to  do  so,  we  need  the  following  two propositions (see \cite{ChSu2014}, \cite{QRT19}).

\begin{proposition} [Proposition 3.11 \cite{QRT19}]\label{Proposition: Constructing a close quasi-geodesic} Given a geodesic segment $b$ (possibly infinite) and points $x,y \in X$ with $d(x,y)<d(x,b)$, there exists a $(32, 0)$-quasi-geodesic $\beta:[s_0,s_1] \rightarrow X$ with end points on $b$ such that $\beta (s_0)=x_b$, \begin{center}
    $\frac{1}{4}d(x_b,y_b)\leq d(\beta(s_0), \beta(s_1))<d(x_b,y_b)$
\end{center}
and there a point $p=\beta(t)$ on $\beta$ such that 
\begin{center}
    $d(p,b) \geq \frac{1}{80}d(x_b,y_b).$
\end{center}

\end{proposition}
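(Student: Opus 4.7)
Write $\epsilon := d(x_b, y_b)$ and assume $\epsilon > 0$; by the $1$-Lipschitz property of $\pi_b$ (Lemma~\ref{Lem:CAT}), one has $\epsilon \le d(x,y) < d(x,b)$. My plan is to construct $\beta$ as a thin piecewise-geodesic ``bump'' above $b$ of height $h := \epsilon/80$, with one endpoint at $x_b$ and the other at a point strictly between $x_b$ and $y_b$ on $b$.

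Since $h < d(x,b)$, choose $x_h \in [x_b, x]$ at distance $h$ from $x_b$, and choose $y_h \in [y_b, y]$ at distance $\min(h, d(y, b))$ from $y_b$. The characterization of closest-point projection onto a convex set in a \CAT space as producing a ``perpendicular'' geodesic gives $\pi_b(x_h) = x_b$ and $\pi_b(y_h) = y_b$. Along the geodesic $[x_h, y_h]$, the continuous map $q \mapsto d(x_b, \pi_b(q))$ takes the value $0$ at $x_h$ and the value $\epsilon$ at $y_h$, so by the intermediate value theorem there is $z \in [x_h, y_h]$ with $w := \pi_b(z)$ satisfying $d(x_b, w) = \tfrac{1}{2}\epsilon$. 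Define
\[
\beta := [x_b, x_h] \,\cup\, [x_h, z] \,\cup\, [z, w],
\]
parametrized by arc length. Immediately $\beta(s_0) = x_b$ and $\beta(s_1) = w \in b$, with $d(\beta(s_0), \beta(s_1)) = \tfrac{1}{2}\epsilon \in [\tfrac{1}{4}\epsilon, \epsilon)$, and the point $p := x_h$ on $\beta$ satisfies $d(p, b) = h = \tfrac{1}{80}\epsilon$, giving the ``wandering'' conclusion.

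It remains to verify that $\beta$ is a $(32,0)$-quasi-geodesic. Using the triangle inequality through $x_b, y_b$ gives $d(x_h, y_h) \le 2h + \epsilon$, and $d(z, w) = d(z, b) \le d(z, x_b) \le d(z, x_h) + h$; combining these, the total length of $\beta$ is at most $2\epsilon + 6h$, which gives a whole-path ratio of length to endpoint distance well under $32$. For sub-segments contained in a single geodesic piece the condition is immediate, so the essential content lies in sub-segments that cross the junctions $x_h$ and $z$. For those I would use the $1$-Lipschitzness of $\pi_b$ (to lower-bound distances in $X$ via distances of their $b$-projections), the CAT(0) Pythagorean inequality $d(x_b, y_h)^2 \ge \epsilon^2 + h^2$ obtained from $\pi_b(y_h) = y_b$, and the convexity of $d(\cdot, b)$ along geodesics. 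The main obstacle is extracting a uniform lower bound on the Alexandrov angle at $x_h$ in the triangle $[x_b, x_h, y_h]$; this is what prevents $\beta$ from ``folding up'' at the junction, and the perpendicularity conditions furnish the needed control, though the comparison-triangle bookkeeping in near-degenerate configurations is delicate.
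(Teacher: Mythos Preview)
First, note that this proposition is not proved in the paper: it is quoted directly from \cite{QRT19} (their Proposition~3.11, which in turn follows Charney--Sultan), so there is no in-paper argument to compare against.

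On your attempt: the construction of $\beta$ and the verification of the endpoint and wandering conditions are fine. The genuine gap is exactly where you flag it---the $(32,0)$--quasi-geodesic property near the junctions $x_h$ and $z$. You assert that ``the perpendicularity conditions furnish the needed control'' on the Alexandrov angle at $x_h$, but the perpendicularity you actually have is at $x_b$ (and at $w=\pi_b(z)$), not at $x_h$ or $z$. The \CAT comparison inequality goes the wrong way for this purpose: it gives $\angle_{x_h}(x_b,y_h)\le\overline\angle_{x_h}(x_b,y_h)$, and the side-length data you possess ($d(x_b,x_h)=h$, $\epsilon\le d(x_h,y_h)\le\epsilon+2h$, $d(x_b,y_h)\ge\epsilon$) places no lower bound on $\overline\angle_{x_h}$---indeed these constraints are compatible with the degenerate comparison triangle having $x_b$ on the segment $[x_h,y_h]$, i.e.\ comparison angle zero at $x_h$. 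Without a lower bound on that angle, nothing prevents $\beta$ from folding at $x_h$, and the arc-length/distance ratio from blowing up there; the same issue recurs at $z$. The hypothesis $d(x,y)<d(x,b)$ does restrict the geometry (and rules out the naive tripod-type examples), but you have not used it beyond ensuring $h<d(x,b)$, and converting it into a junction-angle bound is not at all automatic.

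The argument in \cite{QRT19} avoids this local-angle difficulty by building the quasi-geodesic out of the full segments $[x_b,x]$, $[x,y]$, $[y,y_b]$ (and then truncating), so that the quasi-geodesic constants come from global projection estimates of the form $d(u,v)\gtrsim d(u,b)+d(u_b,v_b)+d(v,b)$ rather than from bounding angles at intermediate junction points. If you want to salvage your short-bump construction, you would need a different mechanism---for instance, working with comparison triangles based at $x_b$ and $w$ rather than at $x_h$ and $z$---but as written the proof is incomplete.
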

The following proposition states that if $x,y$ are within $\kappa(x)$ of each other, then $\kappa(x)$ and $\kappa(y)$ are multiplicatively the same.

\begin{proposition}[{\cite[Lemma 3.2]{QRT19}}] \label{lemma: constants}
For any $\dd_0>0$, there exists $\dd_1,\dd_2>0$ depending only on $\dd_0$ and $\kappa$ so that for $x,y \in X$, we have 
\begin{center}
    $d(x,y) \leq \dd_0 \kappa(x) \implies \dd_1\kappa(x) \leq \kappa(y) \leq \dd_2 \kappa(x).$
\end{center}
\end{proposition}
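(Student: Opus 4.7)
The plan is to reduce the statement to two elementary properties of $\kappa$: the concavity inequality \eqref{Eq:Concave}, which gives $\kappa(at) \leq a\kappa(t)$ for $a > 1$, together with monotonicity and sublinearity. The point is that the hypothesis $d(x,y) \leq \dd_0\kappa(\|x\|)$ forces $\|y\|$ to be multiplicatively comparable to $\|x\|$ once $\|x\|$ is large, and then concavity transfers this comparability to $\kappa$.

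First I would apply the triangle inequality to conclude
\[
\|x\| - \dd_0 \kappa(\|x\|) \;\leq\; \|y\| \;\leq\; \|x\| + \dd_0 \kappa(\|x\|).
\]
Since $\kappa$ is sublinear, $\kappa(t)/t \to 0$, so there exists a threshold $T = T(\dd_0, \kappa)$ such that $\dd_0 \kappa(t) \leq t/2$ whenever $t \geq T$. Assume first that $\|x\| \geq T$. Then $\tfrac{1}{2}\|x\| \leq \|y\| \leq 2\|x\|$. For the upper bound on $\kappa(\|y\|)$, monotonicity together with \eqref{Eq:Concave} applied with $a=2$ yields
\[
\kappa(\|y\|) \leq \kappa(2\|x\|) \leq 2\kappa(\|x\|).
\]
For the lower bound, note that \eqref{Eq:Concave} also implies $\kappa(\|x\|) = \kappa(2\cdot\tfrac{\|x\|}{2}) \leq 2\kappa(\tfrac{\|x\|}{2}) \leq 2\kappa(\|y\|)$, so $\kappa(\|y\|) \geq \tfrac{1}{2}\kappa(\|x\|)$. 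This handles the large-$\|x\|$ regime with constants $\dd_1 = 1/2$, $\dd_2 = 2$.

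Next I would handle the small-$\|x\|$ regime, where $\|x\| \leq T$. Here both $\|x\|$ and $\|y\|$ lie in the bounded interval $[0, T + \dd_0 \kappa(T)]$, so monotonicity gives
\[
1 \leq \kappa(\|x\|),\, \kappa(\|y\|) \leq \kappa\bigl(T + \dd_0 \kappa(T)\bigr) =: M,
\]
where $M = M(\dd_0, \kappa)$. Hence $\kappa(\|y\|)/\kappa(\|x\|) \in [1/M, M]$.

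Finally I would take $\dd_1 = \min(1/2,\, 1/M)$ and $\dd_2 = \max(2,\, M)$, both depending only on $\dd_0$ and $\kappa$, to conclude in both regimes. The argument is entirely routine; there is no real obstacle, but one must be a little careful to separate the bounded-$\|x\|$ case (where the sublinearity threshold is not yet effective) from the asymptotic case, since a single multiplicative comparison fails near $0$ where $\kappa$ could be flat at its minimum value $1$.
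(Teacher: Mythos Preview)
Your argument is correct. Note, however, that the paper does not actually prove this proposition: it is quoted verbatim as \cite[Lemma 3.2]{QRT19} and used as a black box, so there is no in-paper proof to compare against. Your approach---triangle inequality to control $\|y\|$ in terms of $\|x\|$, then the concavity estimate \eqref{Eq:Concave} together with monotonicity to transfer multiplicative control from norms to $\kappa$-values, with a separate treatment of the bounded regime $\|x\|\leq T$ using $\kappa\geq 1$---is exactly the standard one and is essentially how the result is established in \cite{QRT19}.
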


Now we present a slightly different but equivalent contracting condition that we will frequently use in the paper:

\begin{lemma} \label{lem: contracting implies projection contracting}
A geodesic ray is $\kappa$--contracting if and only if there exists a constant $\cc>0$ such that for any $x,y \in X$ with $d(x,y)<d(x,b)$, we have 
\[ d(x_b,y_b) \leq \cc \kappa(x_b). \]

\end{lemma}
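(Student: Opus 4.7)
The plan is to handle the two directions asymmetrically. The easy half is that the new projection bound implies $\kappa$-contracting: in a CAT(0) space the Euclidean comparison triangle $\triangle\go\, x_b\, x$ has a right angle at $x_b$, so $\|x_b\| \leq \|x\|$ and hence $\kappa(x_b) \leq \kappa(x)$ by monotonicity. Given $x, y$ with $d(x, y) \leq d(x, b)$, I would interpolate $y_\lambda \in [x, y]$ with $d(x, y_\lambda) = \lambda\, d(x, y) < d(x, b)$ for $\lambda < 1$, apply the hypothesis to get $d(x_b, (y_\lambda)_b) \leq \cc\, \kappa(x_b)$, and let $\lambda \to 1$; continuity of $\pi_b$ (Lemma \ref{Lem:CAT}(2)) then yields $d(x_b, y_b) \leq \cc\, \kappa(x_b) \leq \cc\, \kappa(x)$, which is Definition \ref{Def:Contracting}.

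For the nontrivial forward direction I plan to pass through the $\kappa$-Morse characterization. Assuming $b$ is $\kappa$-contracting, Theorem \ref{thm: ka Morse iff contracting} provides a $\kappa$-Morse gauge $\mm_b$. For $x, y$ with $d(x, y) < d(x, b)$, the key input is Proposition \ref{Proposition: Constructing a close quasi-geodesic}, which supplies a $(32, 0)$-quasi-geodesic $\beta : [s_0, s_1] \to X$ with $\beta(s_0) = x_b$, $\beta(s_1) \in b$ at distance at most $d(x_b, y_b)$ from $x_b$, and an interior point $p$ satisfying $d(p, b) \geq \tfrac{1}{80}\, d(x_b, y_b)$. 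Since both endpoints of $\beta$ lie on $b$, the $\kappa$-Morse property places $\beta$ inside $\calN_\kappa(b, \mm_b(32, 0))$, so
\[
\tfrac{1}{80}\, d(x_b, y_b) \;\leq\; d(p, b) \;\leq\; \mm_b(32, 0)\, \kappa(\|p\|),
\]
while the $(32,0)$ quasi-geodesic bounds imply $\|p\| \leq \|x_b\| + \sC\, d(x_b, y_b)$ for a universal constant $\sC$.

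The remaining task is to convert $\kappa(\|p\|)$ into $\kappa(\|x_b\|)$, and I would split into two regimes. If $d(x_b, y_b) \leq \|x_b\|$, then $\|p\| \leq (1 + \sC) \|x_b\|$, and the concavity inequality \eqref{Eq:Concave} gives $\kappa(\|p\|) \leq (1+\sC)\, \kappa(x_b)$, producing the desired bound directly. The delicate case, which I expect to be the main obstacle, is $d(x_b, y_b) > \|x_b\|$: here $\|p\| \leq (1+\sC)\, d(x_b, y_b)$ and the Morse estimate becomes self-referential,
\[
d(x_b, y_b) \;\leq\; 80\, \mm_b(32, 0)\, (1+\sC)\, \kappa(d(x_b, y_b)).
\]
Sublinearity of $\kappa$ resolves this loop: because $\kappa(t)/t \to 0$, the inequality can only hold for $d(x_b, y_b)$ below some absolute threshold $T = T(\kappa, \mm_b)$, and then $d(x_b, y_b) \leq T \leq T\, \kappa(x_b)$ using $\kappa \geq 1$. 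Taking $\cc$ to be the maximum of the two constants produced completes the argument.
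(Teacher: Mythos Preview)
Your proposal is correct and follows the same overall strategy as the paper: the backward direction is the trivial observation $\kappa(x_b)\leq\kappa(x)$, and the forward direction feeds $x,y$ into Proposition~\ref{Proposition: Constructing a close quasi-geodesic} to get the $(32,0)$-quasi-geodesic $\beta$ and the point $p$ with $d(p,b)\geq\tfrac{1}{80}d(x_b,y_b)$, then invokes $\kappa$-Morseness (via Theorem~\ref{thm: ka Morse iff contracting}) to bound $d(p,b)$ by $\mm_b(32,0)\kappa(p)$.

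The only divergence is in how you convert $\kappa(p)$ to $\kappa(x_b)$. The paper routes through the projection $p_b$: first Proposition~\ref{lemma: constants} gives $\kappa(p)\leq \dd\,\kappa(p_b)$ since $d(p,p_b)\leq \mm_b(32,0)\kappa(p)$, and then the $(32,0)$-quasi-geodesic bound $d(x_b,p)\leq 32\,d(x_b,y_b)$ together with the Lipschitz projection yields $d(x_b,p_b)\leq$ constant $\cdot\,\kappa(p_b)$, so a second application of Proposition~\ref{lemma: constants} gives $\kappa(p_b)\leq \dd'\kappa(x_b)$. This avoids any case split. Your route instead bounds $\|p\|\leq \|x_b\|+\sC\,d(x_b,y_b)$ directly and then splits according to whether $d(x_b,y_b)\leq\|x_b\|$ (where concavity finishes immediately) or $d(x_b,y_b)>\|x_b\|$ (where you get the self-referential inequality and close it with sublinearity). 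Both work; the paper's argument is a bit more uniform since Proposition~\ref{lemma: constants} packages the ``$\kappa$ doesn't change much over $\kappa$-sized jumps'' principle once and for all, whereas your second case reproves a special instance of that principle by hand.
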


\begin{proof}
($\Rightarrow$) Since $d(x, y) < d(x, b)$, there exists a quasi-geodesic $\beta$ as in Proposition~\ref{Proposition: Constructing a close quasi-geodesic}, and a point $p \in \beta$ such that 

\[
d(x_{b}, y_{b}) \leq 80d(p, b).\]
Using Theorem \ref{thm: ka Morse iff contracting}, since $b$ is $\kappa$--Morse,
\[ d(p, b) =d(p, p_{b}) \leq \mm_b(32, 0) \kappa(p) \]
Applying Proposition~\ref{lemma: constants} to points $p, p_{b}$, we have

\begin{equation}\label{bound1}
d(x_{b}, y_{b}) \leq 80 d(p, p_{b}) \leq 80 \mm_b(32, 0) \kappa(p) \leq 80 \mm_b(32, 0)\kappa(p_b)/\dd_{1}. 
\end{equation}

Consider the distance between $x_{b}$ and $p_{b}$:
\begin{align*}
d(x_{b}, p_{b})  &\leq d(x_{b}, p) \quad \text{projections in \CAT spaces are Lipschitz.} \\
                         & \leq 32 d(x_{b}, y_{b}) + 0 \quad \text{Proposition~\ref{Proposition: Constructing a close quasi-geodesic}} \\
                         & \leq 32 \cdot 80 \mm_b(32, 0)\kappa(p_b)/\dd_{1} \quad \text{Equation~\ref{bound1}} \\
\end{align*}

Thus applying Proposition~\ref{lemma: constants} again, we get some constant $\dd_{2}$ such that $\kappa(p_b) \leq \dd_{2} \kappa(x_{b})$.  Combined with Equation~\ref{bound1}, there exists $\dd_{3}$ such that

\[ d(x_{b}, y_{b}) \leq  \dd_{3} \kappa(x_{b}). \]

($\Leftarrow$) Since projections are distance-decreasing in CAT(0) spaces, 
 \[ d(x_b,y_b) \leq \cc \kappa(x_b) \leq \cc \kappa(x), \]
 as desired.

\end{proof}

\section{$\kappa$-Lower Divergence and slim geodesics}\label{sec:divergence}
In this section we study the divergence and the slimness property of $\kappa$-contracting geodesics. We first give the definitions of these measures. Divergence measures how fast the circumference of a ball grows as its radius increases. Under the lower divergence notion introduced by Charney and Sultan in \cite{ChSu2014}, $\kappa$-contracting rays cannot be distinguished from geodesic rays which lie in a flat. We introduce a lower divergence notion which is more sensitive.
\subsection{$\kappa$-lower divergence of geodesics}
\begin{definition}[$\kappa$-lower divergence] 
Let $b$ be a geodesic ray and fix some  $r>0$ and then $t > r \kappa(t)$. Let $\rho_\kappa (r,t)$ denote the infimum of the lengths of all paths from $b(t-r\kappa(t))$ to $b(t+r\kappa(t))$ which lie outside the open ball of radius $r\kappa(t)$ about $b(t)$.    If there does not exist such a path, then let $\rho_\kappa (r,t) = \infty$. Given a geodesic ray $b$, we define the $\kappa$-\emph{lower divergence} of $b$ to be the function $$div_{\kappa}(r):= \underset{t>r\kappa(t)}{\text{inf}}\,\,\frac{\rho_\kappa(r,t)}{\kappa(t)}$$

\end{definition}
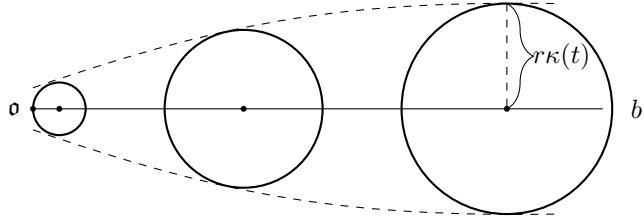
\begin{figure}[H]


\begin{tikzpicture}[scale=0.7]
 \tikzstyle{vertex} =[circle,draw,fill=black,thick, inner sep=0pt,minimum size=.5 mm]
[thick, 
    scale=1,
    vertex/.style={circle,draw,fill=black,thick,
                   inner sep=0pt,minimum size= .5 mm},
                  
      trans/.style={thick,->, shorten >=6pt,shorten <=6pt,>=stealth},
   ]

  \node[vertex] (o) at (0,0)[label=left:$\go$] {}; 
  \node(a) at (11,0)[label=right:$b$] {}; 
  
  \draw (o)--(a){};

  \draw [dashed] (0, 0.4) to [bend left = 10] (10,2){};
  
    \draw [dashed] (0, -0.4) to [bend right = 10] (10,-2){};
\draw [thick] (0.5,0) circle (0.5cm);
\draw [thick] (4,0) circle (1.5cm);
\draw [thick] (9,0) circle (2cm);
\node[vertex] at (0.5, 0){}; 
\node[vertex] at (4, 0){}; 
\node[vertex] at (9, 0){};

  \draw [decorate,decoration={brace,amplitude=10pt},xshift=0pt,yshift=0pt]
  (9, 2) -- (9,0)  node [thick, black,midway,xshift=0pt,yshift=0pt] {};       
  \draw [dashed]  (9, 2) -- (9,0){};
 \node at (10,1) {$\, r\kappa(t)$};

  \end{tikzpicture}
  
\caption{Definition of $\kappa$-lower divergence.}
\label{fig: definition of lower divergence}
\end{figure}

The specific $\kappa$-lower-divergence function is rarely of direct interest, instead the growth rate of the function is the primary property of interest.  Two functions $f(t)$ and $g(t)$ have the same \emph{growth rate} if the limit of their ratios is non-zero and finite,
\[
0 <  \lim_{t \to \infty} \left|\frac{f(t)}{g(t)}\right| < \infty
\]
We say that the $\kappa$-lower-divergence is \emph{linear} if it's growth rate is the same as the growth rate of $t$, and it is \emph{superlinear} if $\displaystyle \lim_{t \to  \infty}\frac{div_\kappa(t)}{t} = \infty$. Similarly we say that the $\kappa$-lower-divergence is \emph{quadratic} if the growth rate is the same as $t^2$, and we say it is exponential if there is some   $a>1$ so that it has the same growth rate as $a^t$.

%
%
%
%
%

 To proceed we discuss slim triangles. The idea of capturing coarse hyperbolicity by slim triangles first came by Gromov in \cite{Gro87}. Here we give four different adaptations of the concept and we shall see in Corollary~\ref{310} that they are equivalent in the setting of CAT(0) spaces.

\begin{definition}\label{different but equivalent 3 def} Let $b$ be an infinite geodesic ray in a CAT(0) space. We  describe four different conditions on the slimness of the geodesic ray:
\begin{enumerate}
    \item We say that $b$ satisfies the \emph{$\kappa$-slim condition 1} if there exists some $\cc \geq 0$ such that for any $x \in X, y \in im(b)$, we have $d(x_b, [x,y]) \leq \cc\kappa(x_b)$.
    
    \item We say that $b$ satisfies the \emph{$\kappa$-slim condition 2} if there exists some $\cc \geq 0$ such that for any $x \in X, y \in im(b)$, we have $d(x_b, [x,y]) \leq \cc\kappa(z)$ for some $z$ between $x_b$ and $y.$

    \item We say that $b$ satisfies the \emph{ $\kappa$-slim condition 3} if there exists some $\cc \geq 0$ such that for any $x,y,z \in X$ with   $[y,z] \subset im(b)$, if $w \in [y,z]$, we have $d(w,[y,x] \cup [x,z]) \leq \cc \kappa(x_b)$.
    
    \item We say that $b$ satisfies the \emph{$\kappa$-slim condition 4} if there exists some $\cc \geq 0$ such that for any $x \in X$,  $[y,z] \subset im(b)$ with $||y|| \leq ||x_b|| \leq ||z||$ if $w \in [y,z],$ we have $d(w,[y,x] \cup [x,z]) \leq \cc \kappa(z).$

\end{enumerate}
\end{definition}

We collect now the implications between these definitions:
\begin{lemma}\label{lem:imply}
Let $b$ be an infinite geodesic ray in a CAT(0) space.
\begin{enumerate}[a).]
\item $\kappa$-slim condition 1 $\Longrightarrow \kappa$-slim condition 2.
\item $\kappa$-slim condition 3 $\Longrightarrow \kappa$-slim condition 4.
\item $\kappa$-slim condition 1 $\iff \kappa$-slim condition 3.
\item $\kappa$-slim condition 4 $\Longrightarrow \kappa$-slim condition 2.
\end{enumerate}

\end{lemma}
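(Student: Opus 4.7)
The plan is to verify parts (a) and (b) directly from the definitions, establish $1 \Rightarrow 3$ of (c) by a convexity argument, prove the main direction $3 \Rightarrow 1$ using the CAT(0) projection inequality, and conclude (d) by the same projection argument with condition 4 in place of condition 3.

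Parts (a) and (b) will be immediate. For (a), take $z = x_b$ as the point ``between $x_b$ and $y$'' required by condition 2; condition 1 then yields the bound with the same constant. For (b), the hypothesis $\|x_b\| \leq \|z\|$ in condition 4, together with the monotonicity of $\kappa$, gives $\kappa(x_b) \leq \kappa(z)$, so the bound of condition 3 implies the bound of condition 4 with the same constant.

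For $1 \Rightarrow 3$ of (c), I would apply condition 1 to the pairs $(x, y)$ and $(x, z)$ to obtain $d(x_b, [x,y]) \leq \cc\kappa(x_b)$ and $d(x_b, [x,z]) \leq \cc\kappa(x_b)$. Since $[x, y]$ and $[x, z]$ are convex in the CAT(0) space, the functions $f(t) = d(b(t), [x, y])$ and $g(t) = d(b(t), [x, z])$ are convex in $t$ by Lemma~\ref{Lem:CAT}. They vanish at $t_y$ and $t_z$ respectively and are each at most $\cc\kappa(x_b)$ at $t_{x_b}$. A case analysis on the position of $t_{x_b}$ relative to $[t_y, t_z]$, together with convexity (the maximum of a convex function on an interval occurs at an endpoint), shows that either $f(t_w)$ or $g(t_w)$ is bounded by $\cc\kappa(x_b)$ for every $w \in [y, z]$; this is precisely condition 3 with the same constant.

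The direction $3 \Rightarrow 1$ is the main step. Given $x \in X$ and $y \in b$, set $p = x_b$ and assume WLOG $\|p\| \leq \|y\|$. If $\|y\| - \|p\| < 2\cc\kappa(p)$, then $d(p, [x, y]) \leq d(p, y) < 2\cc\kappa(p)$ and we are done. Otherwise set $t_* = \|p\| + 2\cc\kappa(p) \leq \|y\|$ and let $w = b(t_*) \in [p, y] \subset b$. Applying condition 3 to the triangle $(x, p, y)$ yields
\[
d(w, [p, x] \cup [x, y]) \leq \cc\kappa(p).
\]
The key observation is that $p = x_b$ forces the angle at $p$ between $[p, x]$ and $b$ to be at least $\pi/2$, which implies $\pi_b(r) = p$ for every $r \in [p, x]$. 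The CAT(0) projection inequality then gives $d(w, r)^2 \geq d(w, p)^2 + d(p, r)^2$, so $d(w, r) \geq d(p, w) = 2\cc\kappa(p) > \cc\kappa(p)$ for every $r \in [p, x]$. Thus the closest point to $w$ in $[p, x] \cup [x, y]$ must lie on $[x, y]$, so $d(w, [x, y]) \leq \cc\kappa(p)$, and the triangle inequality yields $d(p, [x, y]) \leq d(p, w) + d(w, [x, y]) \leq 3\cc\kappa(p)$, establishing condition 1 with constant $3\cc$. Part (d) follows by the same argument applied to condition 4 with the triangle $(x, p, y)$ (whose hypothesis $\|p\| = \|x_b\| \leq \|y\|$ is satisfied) and with $t_* = \|p\| + 2\cc\kappa(y)$; the resulting bound $d(x_b, [x, y]) \leq 3\cc\kappa(y)$ gives condition 2 by taking $z = y$. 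The main subtlety is the projection-perpendicularity at $p = x_b$, which is essential for forcing the slim bound to be realized on $[x, y]$ rather than on $[p, x]$.
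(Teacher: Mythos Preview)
Your proof is correct and follows essentially the same route as the paper. Parts (a) and (b) are identical; for $1\Rightarrow 3$ both arguments use convexity of the distance function to $[x,y]$ and $[x,z]$ along $b$, and for $3\Rightarrow 1$ and (d) both pick an auxiliary point $w$ on $b$ at distance roughly $2\cc\kappa(\cdot)$ from $x_b$, apply the slim condition to the triangle $(x,x_b,y)$, and use the triangle inequality. The one place you are more careful than the paper is in explicitly invoking the CAT(0) perpendicularity at $p=x_b$ (the inequality $d(w,r)^2\ge d(w,p)^2+d(p,r)^2$ for $r\in[p,x]$) to force the nearest point to $w$ onto $[x,y]$; the paper simply asserts $d(x',[x,y])\le \cc\kappa(x_b)$ without isolating this step. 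Your ``WLOG $\|p\|\le\|y\|$'' is fine---the other case is genuinely symmetric (and in (d) corresponds to taking $z=x_b$ rather than $z=y$ in condition~2, which is exactly what the paper does via $z=\max\{x_b,y\}$).
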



\begin{proof}
Let $b$ be an infinite geodesic ray in a CAT(0) space in each statement:
\begin{enumerate}[a).]
\item Choosing $z=x_b$ gives the desired statement.
\item Since $||x_b|| \leq ||z||$, we have

\begin{center}
    $d(w,[y,x] \cup [x,z]) \leq \cc \kappa(x_b) \implies d(w,[y,x] \cup [x,z]) \leq \cc \kappa(z)$.
\end{center}

\item 

$\kappa$-slim condition 1 $\Rightarrow \kappa$-slim condition 3: Let $x,y,z \in X$ with $[y,z] \in im(b)$, using $\kappa$-slim condition 1, we have $d(x_b,[y,x]) \leq \cc \kappa(x_b),$ convexity of the CAT(0) metric then implies that $d(w,[y,x]) \leq \cc \kappa(x_b)$ for all $w \in [y,x_b].$ Similarly, using $\kappa$-slim condition 1, we get that $d(x_b, [x,z]) \leq \cc \kappa(x_b)$. Again, by convexity of the CAT(0) metric, we get that $d(w,[x,z] \leq \cc \kappa(x_b))$.

\noindent $\kappa$-slim condition 3 $\Rightarrow \kappa$-slim condition 1:  Let $x \in X, y \in im(b)$. If $d(x_b,y) \leq \cc \kappa(x_b),$ then we have $d(x_b,[x,y]) \leq d(x_b,y)  \leq \cc \kappa(x_b)$. Otherwise, if $d(x_b,y)> \cc \kappa(x_b),$ we can choose $x'$ between $x_b$ and $y$ with $\cc\kappa(x_b)<d(x',x_b) \leq 2\cc \kappa(x_b).$ Using the $\kappa$-slim condition 3, we get $d(x',[x,y]) \leq \cc \kappa(x_b).$ Therefore, by the triangle inequality, we have $d(x_b,[x,y])\leq 3\cc \kappa(x_b).$

\item
Let $x \in X, y \in im(b)$ and define $z=\text{max}\{x_b,y\}$. If $d(x_b,y) \leq \cc \kappa(z),$ we have $d(x_b,[x,y]) \leq d(x_b,y) \leq \cc \kappa(z).$
 Otherwise, we can choose $x'$ between $x_b$ and $y$ with $\cc\kappa(z)<d(x',x_b) \leq 2\cc \kappa(z)$. Using the $\kappa$-slim condition 4, we get $d(x',[x,y]) \leq \cc \kappa(z).$ Therefore, by the triangle inequality, we have $d(x_b,[x,y])\leq 3\cc \kappa(z).$
\end{enumerate}
\end{proof}

\begin{lemma}\label{travelling in sublinear nbhds}
Let $b$ be a $\kappa$-contracting geodesic ray with contracting constant $\cc$. For any $x \in X$ and $y \in b$ we have

 $$d(x, x_b)+d(x_b, y)- \cc\kappa(||x_b||)-1  \leq d(x,y) \leq d(x,x_b)+d(x_b, y).$$
                  
\end{lemma}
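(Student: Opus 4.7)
The upper bound $d(x,y)\le d(x,x_b)+d(x_b,y)$ is just the triangle inequality, so the plan focuses on the lower bound. The key idea is to combine two facts that pull in opposite directions: first, that the nearest-point projection $\pi_b:X\to b$ onto a convex subset of a \CAT space is $1$-Lipschitz (Lemma~\ref{Lem:CAT}), which forces the projected image of $[x,y]$ to progress along $b$ at speed at most $1$; and second, that the contracting property (Lemma~\ref{lem: contracting implies projection contracting}) pins the projection of any point within $d(x,b)$ of $x$ to stay within $\cc\,\kappa(\|x_b\|)$ of $x_b$. Together these say that the projection can only start ``moving'' once we have already walked roughly $d_x:=d(x,x_b)$ along $\gamma=[x,y]$, which leaves at most $L-d_x$ of arc length to cover the remaining distance $d(x_b,y)$ on $b$.

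Concretely, parametrize $\gamma$ by arc length with $\gamma(0)=x$, $\gamma(L)=y$, where $L=d(x,y)$, and set $\phi:=\pi_b\circ\gamma:[0,L]\to b$. Since $y\in b$ and $\pi_b$ fixes $b$ pointwise, we have $\phi(0)=x_b$ and $\phi(L)=y$. Because both $\gamma$ and $\pi_b$ are $1$-Lipschitz, so is $\phi$, giving
\[
d\big(\phi(t),y\big)\le L-t \quad\text{for all } t\in[0,L].
\]
For any $t<d_x$, the pair $(x,\gamma(t))$ satisfies $d(x,\gamma(t))=t<d(x,b)$, so Lemma~\ref{lem: contracting implies projection contracting} yields
\[
d\big(x_b,\phi(t)\big)\le\cc\,\kappa(\|x_b\|),
\]
and by continuity of $\phi$ the same bound persists at $t=d_x$.

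Combining the two estimates at $t=d_x$ through the triangle inequality,
\[
d(x_b,y)\le d\big(x_b,\phi(d_x)\big)+d\big(\phi(d_x),y\big)\le \cc\,\kappa(\|x_b\|)+(L-d_x),
\]
which rearranges directly to $L\ge d(x,x_b)+d(x_b,y)-\cc\,\kappa(\|x_b\|)$. The additional $-1$ in the statement is harmless slack, useful for gracefully covering edge cases such as $d_x=0$ (i.e.\ $x\in b$, where the bound holds with equality). The only step that requires any care is the $1$-Lipschitz constant for $\pi_b$ (as opposed to a generic Lipschitz constant); this is a standard feature of projection onto convex subsets of \CAT spaces, so no real obstacle appears, and the rest of the argument is purely formal.
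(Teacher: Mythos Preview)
Your proof is correct and follows essentially the same route as the paper's: walk along $[x,y]$ a distance $d(x,x_b)$, use the $\kappa$-contracting property to pin the projection near $x_b$, and then use that $\pi_b$ is distance-nonincreasing to control the remaining stretch to $y$. The only cosmetic difference is that the paper picks the intermediate point at distance $d(x,x_b)-1$ (guaranteeing the strict inequality required by Lemma~\ref{lem: contracting implies projection contracting} without a limit), which is where the extraneous $-1$ comes from; your continuity argument at $t=d_x$ is a clean way to avoid it and in fact yields the marginally sharper bound $d(x,y)\ge d(x,x_b)+d(x_b,y)-\cc\,\kappa(\|x_b\|)$.
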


\begin{figure}[!h]
\begin{tikzpicture}[scale=0.7]
\draw[thin,->] (-5,0) -- (6,0);

\node[below] at (-5,0) {$\go$};

\node[left] at (0,3) {$x$};

\draw[thick,fill=black] (5,0) circle (0.05cm);

\node[above] at (5,0) {$y$};

\draw[thick,fill=black] (0,3) circle (0.05cm);

\draw[thick,fill=black] (0,0) circle (0.05cm);

\node[below] at (0,0) {$x_b$};

\draw[thick,fill=black] (1.2,.65) circle (0.05cm);

\node[above] at (1.2,.65) {$z$};
\draw[thick,fill=black] (1.2,0) circle (0.05cm);

\node[below] at (1.2,0) {$z_b$};

\draw[very thick,black] (0,3) .. controls ++(0,-3) and ++ (-2,0)..(5,0);

\end{tikzpicture}

\caption{ The distance from $x$ to $y$ is approximated by taking the projection to $b$ and then going along $b$.}\label{approximating distance}

\end{figure}
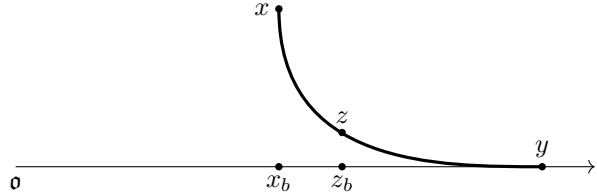

\begin{proof}
Consider a ball around $x$ of radius $r=d(x,x_b)-1$. Let $z$ be the point on $[x,y]$ with $d(x,z)=r$. Since $b$ is $\kappa$-contracting, we have that \[d(x_b,z_b) \leq \cc\kappa(x_b).\]
Notice that

\begin{align*}
d(x,y)-d(x, x_b) &=d(x,z)+d(z,y)-d(x,x_b)\\
                                          &=(d(x,x_b)-1)+d(z,y)-d(x,x_b)\\
                                          &=d(z,y)-1 \\
                                          &\geq d(z_b, y)-1\\
                                          & \geq d(x_b,y)-d(x_b, z_b)-1 \\
                                          &\geq d(x_b, y)- \cc \kappa(x_b)-1.
\end{align*} 
This yields

\[ d(x,y) \geq d(x, x_b)+d(x_b, y)-\cc\kappa(x_b)-1,\]
where the second inequality follows from triangle inequality.

\end{proof}

\begin{lemma}\label{contracting implies slim}
Every $\kappa$-contracting geodesic ray is $\kappa$-slim condition 1.
\end{lemma}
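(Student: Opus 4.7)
The strategy is to exhibit a point $z$ on $[x,y]$ satisfying two conditions: (a) its projection $z_b$ to $b$ is close to $x_b$, and (b) $z$ itself is close to $z_b$. The triangle inequality then gives $d(z, x_b) \leq d(z, z_b) + d(z_b, x_b) = O(\kappa(x_b))$, which is exactly what $\kappa$-slim condition 1 demands.

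I first dispose of the easy edge cases. If $d(x, x_b) \leq 1$ then $x \in [x,y]$ itself is within $\kappa(x_b) \geq 1$ of $x_b$, and if $d(x,y) = d(x, x_b)$ then uniqueness of nearest-point projection in CAT(0) (Lemma \ref{Lem:CAT}) forces $y = x_b$, so $x_b \in [x,y]$. In the remaining main case $d(x, x_b) \geq 1$ and $d(x, y) > d(x, x_b)$, choose $z \in [x, y]$ at distance exactly $d(x, x_b) - 1$ from $x$. This calibration is chosen so that $d(x, z) < d(x, b)$ holds strictly, which activates the projection-contracting formulation in Lemma \ref{lem: contracting implies projection contracting} and yields $d(x_b, z_b) \leq \cc \kappa(x_b)$.

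To bound $d(z, z_b)$, I apply Lemma \ref{travelling in sublinear nbhds} to the pair $(z, y)$; since $y \in b$, this yields $d(z, y) \geq d(z, z_b) + d(z_b, y) - \cc \kappa(z_b) - 1$. Two straightforward estimates feed into this: first, $d(z, y) = d(x, y) - d(x, x_b) + 1 \leq d(x_b, y) + 1$ (from $z \in [x,y]$ combined with the triangle inequality on the triple $(x, x_b, y)$), and second, $d(z_b, y) \geq d(x_b, y) - d(x_b, z_b) \geq d(x_b, y) - \cc\kappa(x_b)$ (triangle inequality together with the previous paragraph). The $d(x_b, y)$ terms cancel, leaving $d(z, z_b) \leq 2 + \cc\kappa(x_b) + \cc\kappa(z_b)$. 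Proposition \ref{lemma: constants}, applied with $\dd_0 = \cc$ to the pair $x_b, z_b$, lets me replace $\kappa(z_b)$ by a constant multiple of $\kappa(x_b)$; absorbing the additive $2$ using $\kappa(x_b) \geq 1$ and combining with $d(x_b, z_b) \leq \cc\kappa(x_b)$ through a final triangle inequality $d(z, x_b) \leq d(z, z_b) + d(z_b, x_b)$ produces a bound of the form $\cc' \kappa(x_b)$, completing the argument.

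The main obstacle is the correct choice of $z$. More intuitive candidates such as the midpoint of $[x,y]$ or the point of $[x,y]$ nearest to $x_b$ fail to simultaneously meet both the strict inequality $d(x,z) < d(x, b)$ needed by Lemma \ref{lem: contracting implies projection contracting} and the estimate $d(z, y) \leq d(x_b, y) + O(1)$ that makes Lemma \ref{travelling in sublinear nbhds} collapse into a useful inequality after cancellation. Picking $d(x, z) = d(x, x_b) - 1$ sits precisely at the boundary that activates both lemmas, and the remainder of the argument is then just bookkeeping with the triangle inequality.
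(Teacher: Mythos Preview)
Your proof is correct and follows essentially the same approach as the paper's: pick a point $z$ on $[x,y]$ at (approximately) distance $d(x,x_b)$ from $x$, use the contracting property to bound $d(x_b,z_b)$, apply Lemma~\ref{travelling in sublinear nbhds} to the pair $(z,y)$ to bound $d(z,z_b)$, and finish with the triangle inequality. The only cosmetic differences are that you shift by $1$ to guarantee the strict inequality in Lemma~\ref{lem: contracting implies projection contracting} (the paper takes $d(x,z')=d(x,b)$ and implicitly appeals to the original $\leq$-version of the contracting definition), and you bound $\kappa(z_b)$ via Proposition~\ref{lemma: constants} whereas the paper uses the concavity estimate $\kappa((1+\cc)\|x_b\|)\leq(1+\cc)\kappa(x_b)$ directly.
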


\begin{proof}

Let $x \in X$ and let $y \in b$. Let $z'$ be the point on $[x,y]$ with $d(x,z')=d(x,b).$ Since $b$ is $\kappa$-contracting, using Lemma \ref{lem: contracting implies projection contracting}, we have $d(x_b, z'_b) \leq \cc \kappa(x_b).$ Now, as \[d(y, x_b)+d(x_b, x) \geq d(x,y)=d(x,z')+d(z',y),\] we get that $d(z',y) \leq d(y,x_b).$ Thus, we have 

\begin{equation}\label{E1}
d(z',y) \leq d(y, x_b) \leq d(y, z'_b)+\cc\kappa (x_b).
\end{equation}
 Now, applying Lemma~\ref{travelling in sublinear nbhds} to the points $z'$ and $y$, we get that 

\begin{equation}\label{E2}
d(z',y) \geq d(z', z'_b)+d(z'_b, y)-\cc\kappa(z'_b)-1.
\end{equation}

Since $d(x_b,z'_b) \leq \cc \kappa(x_b)$, we have $||z'_b|| \leq ||x_b||,$ or \[||z'_b||\leq ||x_b||+\cc \kappa(x_b) \leq ||x_b||+\cc||x_b||=(1+\cc)||x_b||.\]
 Combining the inequalities \ref{E1} and \ref{E2}, we get: 

\[ d(z', z'_b)) \leq \cc \kappa(x_b)+ \cc \kappa(z'_b)+1 \leq  \cc \kappa(x_b)+ \cc \kappa((1+\cc)x_b)+1.\] Therefore

  \[ d(z', z'_b) \leq (\cc+\cc(\cc+1))\kappa(x_b)+1 \] Thus

\[d(z', x_b) \leq d(z', z'_b)+d(z'_b, x_b) \leq (\cc+\cc(\cc+1))\kappa(x_b)+1+\cc \kappa(x_b). \] Which gives 

\[d(z', x_b) \leq (3\cc+\cc^2)\kappa(x_b)+1 \leq (3\cc+\cc^2)\kappa(x_b)+\kappa(x_b) .\]

Thus,
\[d(z', x_b) \leq (3\cc+\cc^2+1)\kappa(x_b) .\] Choosing $\cc'=3\cc+\cc^2+1$ gives the desired result.

\end{proof}

\begin{proposition}\label{thin iff contracting} A geodesic ray $b$ is $\kappa$-contracting if and only if it satisfies $\kappa$-slim condition 2.

\end{proposition}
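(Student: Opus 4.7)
The forward implication is immediate: Lemma~\ref{contracting implies slim} shows that every $\kappa$-contracting geodesic satisfies $\kappa$-slim condition 1, and Lemma~\ref{lem:imply}(a) shows that condition 1 implies condition 2.

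For the reverse direction, I argue by contradiction using the reformulation of $\kappa$-contracting given in Lemma~\ref{lem: contracting implies projection contracting}. Suppose $b$ satisfies $\kappa$-slim condition 2 with constant $\cc$ but is not $\kappa$-contracting. Then for each $N$ one can find $x, y \in X$ with $d(x,y) < d(x,b)$ and $D := d(x_b, y_b) \geq N \kappa(x_b)$. Applying $\kappa$-slim condition 2 to the pair $(x, y_b)$ produces a point $q^{*} \in [x, y_b]$ and a point $z$ on $b$ between $x_b$ and $y_b$ such that $d(x_b, q^{*}) \leq \cc \kappa(z)$. Concavity of $\kappa$ gives the subadditive inequality $\kappa(a+b) \leq \kappa(a) + \kappa(b)$, so $\kappa(z) \leq \kappa(y_b) \leq \kappa(x_b) + \kappa(D)$. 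The CAT(0) right-angle inequality at $x_b$ in the triangle $(x, x_b, y_b)$, together with the triangle bound $d(x, y_b) \leq 2 d(x,y) + d(x,b) < 3 d(x,b)$, yields $d(x,b) > D/(2\sqrt{2})$.

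The proof then splits into cases based on the position of $q^{*}$ along $[x, y_b]$. In the favorable case $d(x, q^{*}) \leq d(x,b)/2$, the triangle inequality forces $d(x_b, q^{*}) \geq d(x,b)/2$, whence $d(x,b)/2 \leq \cc(\kappa(x_b) + \kappa(D))$. Combined with $d(x, b) > D/(2\sqrt{2})$, this gives $D < 4\sqrt{2}\, \cc\, (\kappa(x_b) + \kappa(D))$; by sublinearity of $\kappa$ (i.e.\ $\kappa(D)/D \to 0$), for $D$ sufficiently large one obtains $D \leq 8 \sqrt{2}\, \cc\, \kappa(x_b)$, contradicting $D \geq N \kappa(x_b)$ for $N$ large. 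In the unfavorable case, when $q^{*}$ lies past the midpoint of $[x, y_b]$, the same argument is applied symmetrically to the pair $(y, x_b)$; if neither case is directly favorable, one invokes Proposition~\ref{Proposition: Constructing a close quasi-geodesic} to construct a $(32, 0)$-quasi-geodesic $\beta$ containing a point $p$ with $d(p, b) \geq D/80$, and applies $\kappa$-slim 2 to $p$ paired with the endpoints of $\beta$ to close the argument via a similar sublinearity estimate.

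The main obstacle is the precise control of the $\kappa$-values appearing in $\kappa$-slim condition 2. Since the witness point $z$ can lie as far as $D$ from $x_b$, the multiplicative estimate from Proposition~\ref{lemma: constants}---whose constant $\dd_{2}$ deteriorates with the displacement ratio $d(z, x_b)/\kappa(x_b)$---is not uniform in $D$, and a direct application does not close the gap. The resolution is to instead use the sharper additive bound $\kappa(z) \leq \kappa(x_b) + \kappa(D)$ coming from concavity, and then exploit the sublinearity $\kappa(D) = o(D)$ to defeat the linear-in-$D$ lower bound $D/(2\sqrt{2})$ on $d(x,b)$.
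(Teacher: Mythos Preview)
Your forward direction matches the paper's.

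Your backward direction has a genuine gap. The favorable case $d(x,q^{*})\le d(x,b)/2$ is handled correctly, but nothing forces the slim-2 witness $q^{*}$ to lie in that half of $[x,y_{b}]$: condition~2 only says \emph{some} point of $[x,y_b]$ is $\cc\kappa(z)$-close to $x_b$, and that point may well sit far from $x$. Switching to the pair $(y,x_b)$ does not rescue this, because the analogous favorable sub-case would only yield $d(y,y_b)\le 2\cc\kappa(z')$, which says nothing about $D=d(x_b,y_b)$ (indeed $d(y,y_b)$ can be arbitrarily small). Your final fallback via Proposition~\ref{Proposition: Constructing a close quasi-geodesic} is not an argument either: producing $p$ with $d(p,b)\ge D/80$ and applying slim~2 to $(p,\text{endpoint})$ controls only $d(p_b,[p,\text{endpoint}])$, which does not bound $D$. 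As written, the unfavorable branch never closes.

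The paper avoids the case split entirely with a direct two-line estimate. First apply slim condition~2 to the pair $(y, x_b)$: one obtains $w\in[y,x_b]$ with $d(y_b,w)\le \cc\kappa(z)$, and since $d(x,y)<d(x,x_b)$, CAT(0) convexity of $t\mapsto d(x,\cdot)$ along $[y,x_b]$ gives $d(x,w)\le\max\{d(x,y),d(x,x_b)\}=d(x,x_b)$, hence
\[
d(x,y_b)\le d(x,w)+d(w,y_b)\le d(x,x_b)+\cc\kappa(z).
\]
Next apply slim condition~2 to $(x,y_b)$: a point $w'\in[x,y_b]$ with $d(x_b,w')\le\cc\kappa(z')$ gives, via two triangle inequalities,
\[
d(x,y_b)=d(x,w')+d(w',y_b)\ge\bigl(d(x,x_b)-\cc\kappa(z')\bigr)+\bigl(d(x_b,y_b)-\cc\kappa(z')\bigr).
\]
Combining the two displays yields $d(x_b,y_b)\le 2\cc\kappa(z')+\cc\kappa(z)$, and since $z,z'\in[x_b,y_b]$ one finishes by bounding $\kappa(\max\{\Norm{x_b},\Norm{y_b}\})$ in terms of $\kappa(x)$ using $\Norm{y_b}\le\Norm{y}\le 2\Norm{x}$. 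The ingredient you are missing is the convexity trick producing the \emph{upper} bound on $d(x,y_b)$; with it, the location of the witness on $[x,y_b]$ is irrelevant and no contradiction argument or Pythagorean estimate is needed.
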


$$
\begin{tikzpicture}
\draw[thin,->] (-5,0) -- (5,0);

\draw[thick] (0,3.5) circle (1.5cm);
\node[above] at (0,3.5) {$x$};
\node[below] at (0,0) {$x_b$};
\draw[thin] (0,3.5) -- (.5,0);
\draw[thin ] (1,3.5) -- (0,0);

\draw[thin,dashed] (0,3.5) -- (0,0);

\draw[dashed] (1,3.5) node[above] {$y$} .. controls ++(-.1,-.1) and ++(0,1) .. (.5,0) node[below] {$y_b$};

\draw[thick,fill=black] (0,3.5) circle (0.02cm);
\draw[thick,fill=black] (1,3.5) circle (0.02cm);

\end{tikzpicture}
$$

\begin{proof}  We have already shown that $\kappa$-contracting geodesic rays are $\kappa$-slim condition 1, and by Lemma \ref{lem:imply} a) it also satisfies $\kappa$-slim condition 2.  Suppose that $b$ is $\kappa$-slim condition 2, let $x \in X$ and let $B$ be a disjoint ball around $x$ of radius $r.$ Let $y \in B=B(x,r)$ and let $x_b, y_b$ be the respective projections of $x,y$ on $b$. Since $b$ is $\kappa$-slim condition 2, there exists a point $w \in [x_b,y]$ with $d(y_b,w) \leq \cc \kappa(z)$ for some $z \in [x_b,y_b].$ Using convexity of the CAT(0) metric, since $d(x,y) \leq d(x,x_b)$, we have $ d(x,w) \leq max \{d(x,x_b),d(x,y) \}=d(x,x_b).$

Therefore

\begin{center}
$d(x,y_b) \leq d(x,w)+\cc\kappa(z) \leq d(x,x_b)+\cc\kappa(z).$
\end{center}

On the other hand, applying the $\kappa$-slim condition 2 to $\Delta=\Delta (x,x_b,y_b)$, we get a point $w'$ on $[x,y_b]$ with $d(x_b, w') \leq \cc\kappa(z')$ for some $z' \in [x_b,y_b]$. Hence, we have
 
 \begin{align*}
d(x,y_b)&=d(x,w')+d(w',y_b)\\
&\geq [d(x,x_b)-\cc \kappa(z')] +[d(x_b,y_b)-\cc \kappa(z')]\\
&=d(x,x_b)+d(x_b,y_b)-2\cc\kappa(z'). 
\end{align*}

This implies

\begin{center}
    
    $d(x,x_b)+d(x_b,y_b)-2\cc\kappa(z') \leq d(x,y_b) 
    \leq d(x,x_b)+ \cc \kappa(z).$
    \end{center}

 Therefore, we have 
 
 \begin{center}
     $d(x_b,y_b) \leq 2\cc \kappa(z')+\cc\kappa(z).$
 \end{center}

We have two cases, if $x_b \leq y_b$, then

\begin{align*}
    d(x_b,y_b) &\leq 2\cc \kappa(z')+\cc\kappa(z)\\
    &\leq 2\cc \kappa(y_b)+\cc\kappa(y_b)\\ &\leq 2\cc \kappa(y)+2\cc\kappa(y)\\
    &=4\cc \kappa(y)\\
    &\leq 4 \cc \kappa(2x)\\ 
    &\leq 8 \cc \kappa(x).
\end{align*}

Now, if $y_b \leq x_b$, then 

\begin{align*}
d(x_b,y_b) &\leq 2\cc \kappa(z')+\cc\kappa(z)\\
&\leq 2\cc \kappa(x_b)+\cc\kappa(x_b)\\
&\leq 2\cc \kappa(x)+2\cc\kappa(x)\\
&=4\cc \kappa(x).
\end{align*}

\end{proof}

\begin{corollary}\label{310}
All the $\kappa$-slim conditions are equivalent.
\end{corollary}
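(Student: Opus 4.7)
The proof is essentially an exercise in assembling the pieces already proved. The plan is to form a cycle of implications among the four conditions by combining Lemma \ref{lem:imply}, Lemma \ref{contracting implies slim}, and Proposition \ref{thin iff contracting}, which together contain all of the geometric content needed.

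First, I would recall the implications collected in Lemma \ref{lem:imply}: we have (1) $\Rightarrow$ (2), the equivalence (1) $\iff$ (3), the implication (3) $\Rightarrow$ (4), and (4) $\Rightarrow$ (2). Thus the only piece missing to close the loop is a route from condition (2) back to condition (1).

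For this missing step, the plan is to pass through the $\kappa$-contracting property as an intermediate. By Proposition \ref{thin iff contracting}, a geodesic ray $b$ satisfies $\kappa$-slim condition 2 if and only if $b$ is $\kappa$-contracting. Combining this with Lemma \ref{contracting implies slim}, which says every $\kappa$-contracting geodesic ray satisfies $\kappa$-slim condition 1, yields the chain (2) $\Rightarrow$ $\kappa$-contracting $\Rightarrow$ (1).

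Assembling these gives the cycle
\[
(1) \Longrightarrow (3) \Longrightarrow (4) \Longrightarrow (2) \Longrightarrow (1),
\]
so all four $\kappa$-slim conditions are equivalent. There is no real obstacle in the argument — everything has already been done in the preceding lemmas and proposition, and this corollary simply records the consequence.
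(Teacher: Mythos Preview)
Your proposal is correct and follows essentially the same approach as the paper: the paper's proof also uses Proposition~\ref{thin iff contracting} together with Lemma~\ref{contracting implies slim} to get that $\kappa$-slim condition~2 implies $\kappa$-slim condition~1, and then invokes Lemma~\ref{lem:imply} to close the cycle. Your write-up is simply a more explicit version of the same argument.
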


\begin{proof}
Using Theorem \ref{thin iff contracting} and Lemma \ref{contracting implies slim}, we get that $\kappa$-slim condition 2 implies $\kappa$-slim condition 1. Now, applying Lemmas \ref{lem:imply} we get the desired result.

\end{proof}

Therefore we give the following definition.
\begin{definition}[$\kappa$-slim geodesic]
A geodesic ray $b$ is said to be \emph{$\kappa$-slim} if it satisfies any of the four $\kappa$-slim conditions.
\end{definition}

\begin{theorem} The $\kappa$-lower divergence for a $\kappa$-contracting geodesic ray $b$ is at least quadratic.

\end{theorem}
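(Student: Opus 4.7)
The plan is to fix a $\kappa$-contracting geodesic ray $b$ with contracting constant $\cc$, fix $r > 0$ and $t$ with $t > r\kappa(t)$, set $R := r\kappa(t)$, and show that every path $\gamma$ from $b(t - R)$ to $b(t + R)$ avoiding the open ball $B(b(t), R)$ has length at least $r^2 \kappa(t)/(16\cc)$ once $r$ is large. Dividing by $\kappa(t)$ and infimizing in $t$ then yields $div_\kappa(r) \geq r^2/(16\cc)$, giving the at-least-quadratic statement. The heuristic is that $\kappa$-contraction forces each unit of length of $\gamma$ to move the nearest-point projection on $b$ by only a sublinear amount, so $\gamma$ must be much longer than the shadow it casts on $b$.

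\textbf{Step 1: isolate a controlled subarc of $\gamma$.} Since nearest-point projection onto $b$ is continuous (Lemma~\ref{Lem:CAT}), parametrize $\gamma$ by arc length and write $\gamma(s)_b = b(\tau(s))$; then $\tau$ is continuous with $\tau(0) = t - R$ and $\tau(L(\gamma)) = t + R$. By the intermediate value theorem choose $s_0 < s_N$ with $\tau(s_0) = t - R/2$, $\tau(s_N) = t + R/2$, and $\tau([s_0, s_N]) \subseteq [t - R/2, t + R/2]$. Let $\gamma_0 := \gamma|_{[s_0, s_N]}$. For every $p \in \gamma_0$ the triangle inequality gives
\[
R \leq d(p, b(t)) \leq d(p, p_b) + d(p_b, b(t)) \leq d(p, b) + R/2,
\]
so $d(p, b) \geq R/2$ throughout $\gamma_0$.

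\textbf{Step 2: discretize along $\gamma_0$ and apply $\kappa$-contraction.} Define $s_0^* = s_0$ and, inductively, $s_{i+1}^* = \min(s_i^* + d(\gamma(s_i^*), b)/2, \, s_N)$, stopping at the first $N'$ with $s_{N'}^* = s_N$. Each step satisfies $d(\gamma(s_i^*), \gamma(s_{i+1}^*)) < d(\gamma(s_i^*), b)$, so Lemma~\ref{lem: contracting implies projection contracting} gives
\[
d(\gamma(s_i^*)_b,\, \gamma(s_{i+1}^*)_b) \leq \cc\, \kappa\bigl(\gamma(s_i^*)_b\bigr).
\]
Since $\|\gamma(s_i^*)_b\| = \tau(s_i^*) \leq t + R/2 \leq 2t$ (using $R < t$), concavity of $\kappa$ (see \eqref{Eq:Concave}) bounds the right-hand side by $2\cc\,\kappa(t)$. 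Applying the triangle inequality along the sequence of projections, which connects $b(t - R/2)$ to $b(t + R/2)$, yields
\[
R \leq \sum_{i=0}^{N'-1} d\bigl(\gamma(s_i^*)_b,\, \gamma(s_{i+1}^*)_b\bigr) \leq 2\cc\, N'\, \kappa(t),
\]
forcing $N' \geq r/(2\cc)$.

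\textbf{Step 3: sum arc lengths.} Each full step contributes arc length $s_{i+1}^* - s_i^* = d(\gamma(s_i^*), b)/2 \geq R/4$, so for $r \geq 4\cc$,
\[
L(\gamma) \geq s_N - s_0 \geq (N' - 1) \cdot \tfrac{R}{4} \geq \frac{r^{2}\kappa(t)}{16\cc}.
\]
Infimizing in $t$ gives $div_\kappa(r) \geq r^2/(16\cc)$ for all sufficiently large $r$, so $div_\kappa$ is at least quadratic. The main obstacle is that the projection-jump bound in Step 2 a priori depends on the varying norm $\|\gamma(s_i^*)_b\|$; the IVT selection of $\gamma_0$ in Step 1 keeps these norms in $[0, 2t]$, allowing concavity of $\kappa$ to collapse them to the single uniform bound $2\kappa(t)$.
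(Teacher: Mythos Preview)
Your proof is correct, and the underlying mechanism is the same as the paper's: $\kappa$--contraction forces each ball of radius $\sim d(\cdot,b)$ along the detour to project to a segment of length $\lesssim \kappa(t)$ on $b$, so covering a shadow of length $\sim r\kappa(t)$ requires $\sim r$ such balls, each contributing $\sim r\kappa(t)$ to the arc length. The implementation differs, though. The paper chooses a point $z_0\in\beta$ with $\pi_b(z_0)=b(t)$ and then steps along $\beta$ with \emph{decreasing} radii $r\kappa(t)-i\cc\kappa(2t)$ (since after each step the guaranteed distance to $b$ drops), summing the resulting arithmetic progression to get $|\beta|/\kappa(t)\ge r^2/(4\cc)-r$. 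You instead use the intermediate value theorem to isolate a subarc $\gamma_0$ whose projection is trapped in $[t-R/2,\,t+R/2]$, which buys you a \emph{uniform} lower bound $d(\cdot,b)\ge R/2$ on $\gamma_0$ and hence uniform step sizes $\ge R/4$; the count then reduces to a product $N'\cdot R/4$ rather than a triangular sum. Your IVT trick is a clean substitute for the paper's shrinking-step bookkeeping, and both give the same quadratic order with comparable constants.
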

$$
\begin{tikzpicture}
\draw[thin,->] (-5,0) -- (5,0);

\node[right] at (2,-.3) {$b(t)+r\kappa(t)$};
\draw[thick,fill=black] (2.5,0) circle (0.03cm);
\draw[thick,fill=black] (0,0) circle (0.03cm);

\draw[dashed] (.4,0) .. controls ++(0,1) and ++(-.3,-.3) .. (1.05,2.25) node[left] {$z_1$};
\draw[dashed] (.8,0) .. controls ++(0,1) and ++(-.3,-.3) .. (1.7,1.7) node[left] {$z_2$};

\draw[thin, dashed] (0,2.5) -- (0,0);
\node[above] at (0,2.5) {$z_0$};
\node[below] at (-.1,0) {$b(t)$};

\begin{scope}
    \clip (-2.5,0) rectangle (2.5,2.5);
    \draw (0,0) circle(2.5);
    \draw (-2.5,0) -- (2.5,0);
\end{scope}

\end{tikzpicture}
$$

\begin{proof}

Suppose that $b$ is $\kappa$-contracting. Consider a ball $B=B_{r \kappa(t)}(b(t))$ with $t>r\kappa(t)$. Let $\beta$ be a path avoiding $B$ connecting $b(t-r\kappa(t))$ to $b(t+r \kappa(t))$. Choose $z_0 \in im(\beta)$ with $\pi_b(z_0)=b(t).$ Since $d(z_0,b(t)) \geq r\kappa(t),$ we have \[d(z_0,b(t+r\kappa(t))>r\kappa(t),\] and hence we may choose $z_1 \in im(\beta)$ with $d(z_0,z_1)=r \kappa(t)$. Now, if $x_1= \pi_b(z_1)$, then $d(b(t),x_1) \leq \cc\kappa(2t).$ By triangle inequality, \[d(z_1,x_1) \geq r\kappa(t)-\cc\kappa(2t)\] and hence \[d(z_1, b(t+r\kappa(t))> r\kappa(t)-\cc\kappa(2t).\] Thus, we may choose $z_2 \in im (\beta)$ with $d(z_2,z_1)=r \kappa(t)-\cc \kappa(2t).$ We iterate this process to get a sequence $z_0,z_1,z_2,...,z_n$ on $\beta$, with respective projections $b(t)=x_0, x_1,...,x_n$ such that 
\[d(z_i,z_{i+1})=r\kappa(t)-i\cc \kappa(2t) \quad \text{ and } \quad d(x_i,x_{i+1}) \leq \cc\kappa(2t),\] where $n$ is an integer satisfying $\frac{r\kappa(t)}{\cc\kappa(2t)}-1 \leq n \leq \frac{r\kappa(t)}{\cc\kappa(2t)}$. We remark that since $\kappa$ is an increasing function and since $\kappa(2t) \leq 2 \kappa(t)$, we have $1 \leq \frac{\kappa(2t)}{\kappa(t)}  \leq 2$. We also have
     \begin{align*}
        |\beta| &\geq d(z_0,z_1)+...+d(z_{n-1}, z_n) \\
                &= (r\kappa(t))+(r\kappa(t)-\cc\kappa(2t))+(r\kappa(t)-2\cc\kappa(2t))+...+(r\kappa(t)-(n-1)\cc\kappa(2t))\\
                &=nr\kappa(t)-\frac{n(n-1)\cc\kappa(2t)}{2} \\
                &\geq nr \kappa(t)-\frac{n^2\cc\kappa(2t)}{2}.
     \end{align*}

Thus

\begin{align*}
\frac{|\beta|}{r \kappa(t)} & \geq n-\frac{n^2 \cc \kappa(2t)}{2r \kappa(t)}\\
                            &\geq \frac{r\kappa(t)}{\cc\kappa(2t)}-1-\big( \frac{r\kappa(t)}{c\kappa(2t)} \big)^{2}\frac{\cc \kappa(2t)}{2r\kappa(t)}\\
                            &=\frac{\kappa(t)}{\cc\kappa(2t)}(r-\frac{r}{2})-1\\
                            &=\frac{\kappa(t)r}{2\cc\kappa(2t)}-1 \\
                            &\geq \frac{r \kappa(t)}{2\cc 2 \kappa(t)}-1\\ &=\frac{r}{4\cc}-1. 
                            \end{align*}
Hence, we have $\frac{|\beta|}{\kappa(t)} \geq \frac{r^2}{4c}-r$. Since $\beta$ was an arbitrary path avoiding $B$, we conclude

\begin{center}

$div_{\kappa}(r)= \underset{t>r\kappa(t)}{\text{inf}}\,\,\frac{\rho_\kappa(r,t)}{\kappa(t)} \geq \frac{r^2}{4\cc}-r$.

\end{center}

\end{proof}


\begin{theorem} If the $\kappa$-lower divergence of a geodesic ray $b$ is superlinear, then $b$ is $\kappa$-slim.
\end{theorem}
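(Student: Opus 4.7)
The plan is to prove the contrapositive: suppose $b$ is not $\kappa$-slim, and deduce that the $\kappa$-lower divergence of $b$ is not superlinear. By Proposition~\ref{thin iff contracting} and Corollary~\ref{310}, $b$ failing to be $\kappa$-slim is equivalent to $b$ failing to be $\kappa$-contracting, so Lemma~\ref{lem: contracting implies projection contracting} furnishes, for every $n \geq 1$, a pair $x_n, y_n \in X$ with $d(x_n, y_n) < d(x_n, b)$ and $s_n := d((x_n)_b, (y_n)_b) > n\,\kappa(\|(x_n)_b\|)$. The goal is to convert each bad pair into a path around a ball along $b$ whose length is at most a universal constant times $s_n$ and whose avoided ball has radius at least a universal constant times $s_n$; this produces radii $r_n \to \infty$ along which $\operatorname{div}_\kappa(r_n) = O(r_n)$, contradicting superlinearity.

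To build the path, I would apply Proposition~\ref{Proposition: Constructing a close quasi-geodesic} to each pair $(x_n, y_n)$ to obtain a continuous $(32, 0)$-quasi-geodesic $\beta_n$ with both endpoints on $b$ at distance between $s_n/4$ and $s_n$, of parameter length at most $32 s_n$, together with a point $p_n \in \beta_n$ with $d(p_n, b) \geq s_n/80$. I would extract the maximal sub-arc of $\beta_n$ through $p_n$ on which $d(\cdot, b) \geq s_n/C$ (universal $C$), with cut endpoints $q_1, q_2$, and close it back to $b$ by the geodesics $[\pi_b(q_i), q_i]$ of length $s_n/C$. In a CAT(0) space, $\pi_b(q') = \pi_b(q)$ for every $q'$ on the geodesic from $\pi_b(q)$ to $q$, and the Pythagorean-type inequality $d(q, w)^2 \geq d(q, \pi_b(q))^2 + d(\pi_b(q), w)^2$ holds for the nearest-point projection onto the convex set $b$, so these closing geodesics stay at distance at least $d(\pi_b(q_i), w)$ from any fixed $w \in b$. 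Splicing in short arcs of $b$, this produces a path $\gamma_n$ from $b(\tau_n - \rho_n)$ to $b(\tau_n + \rho_n)$ that avoids $B(b(\tau_n), \rho_n)$, with $\rho_n \geq s_n/C'$ and $\operatorname{length}(\gamma_n) \leq C'' s_n$ for universal constants.

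Setting $r_n := \rho_n / \kappa(\tau_n)$, the construction yields $\operatorname{div}_\kappa(r_n) \leq \operatorname{length}(\gamma_n)/\kappa(\tau_n) \leq C'' s_n/\kappa(\tau_n) \leq C''' r_n$. The inequality $s_n > n \kappa(\|(x_n)_b\|)$ together with the monotonicity, subadditivity and sublinearity of $\kappa$ forces $r_n \to \infty$ (split into cases depending on whether $\|(x_n)_b\|$ stays bounded or diverges), so $\operatorname{div}_\kappa(r)/r$ does not tend to infinity, contradicting superlinearity. The main technical obstacle I expect is the middle step: since $\beta_n$ has both endpoints on $b$, it cannot itself avoid any open ball centred on $b$, so the cut-and-close procedure must be arranged so that the avoided ball is centred at a single point $b(\tau_n) \in b$ whose projections $\pi_b(q_1), \pi_b(q_2)$ lie on opposite sides at distance at least $\rho_n$ along $b$. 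Pinning this down requires both the CAT(0) Pythagorean inequality and an intermediate-value/continuity argument for $\pi_b \circ \beta_n$ to ensure that the cut endpoints' projections genuinely bracket a suitable point of $b$.
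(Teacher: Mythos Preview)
Your overall strategy---argue by contrapositive and produce, for arbitrarily large $r$, a path avoiding $B(b(t), r\kappa(t))$ of length $O(r\kappa(t))$---matches the paper's. The route, however, is different, and the cut-and-close step contains a genuine gap.

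The paper does not pass through the equivalence with $\kappa$-contracting or invoke Proposition~\ref{Proposition: Constructing a close quasi-geodesic}. It negates $\kappa$-slim condition~4 directly: for each $r$ there is a triangle $\triangle(x_r, y_r, z_r)$ with $[y_r, z_r] \subset b$ and a point $b(t_r) \in [y_r, z_r]$ at distance exactly $r\kappa(t_r)$ from both sides $[y_r, x_r]$ and $[x_r, z_r]$ (after harmlessly adjusting $x_r$ and $z_r$). Taking points $a, c$ on $b$ and $e$ on $[b(t_r), x_r]$, each at distance $3r\kappa(t_r)$ from $b(t_r)$, projecting them to the nearer triangle side (convexity bounds these projection distances by $r\kappa(t_r)$), and concatenating the resulting eight geodesic segments yields a path of length at most $20\, r\kappa(t_r)$ from $b(t_r - r\kappa(t_r))$ to $b(t_r + r\kappa(t_r))$ that avoids $B(b(t_r), r\kappa(t_r))$. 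Hence $div_\kappa(r) \leq 20r$ for every $r$. The virtue of this approach is that the fat triangle comes pre-equipped with both the centre $b(t_r)$ and the radius $r\kappa(t_r)$, and the avoiding path consists of geodesic pieces whose distance from $b(t_r)$ is controlled by \CAT convexity alone.

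The obstacle you flag in your own route is real and is not resolved by the hints you give. Nothing forces the boundary points $q_1, q_2$ of the maximal sub-arc of $\beta_n$ through $p_n$ (where $d(\cdot, b) \geq s_n/C$) to have projections far apart on $b$: a $(32,0)$-quasi-geodesic may rise from $b$, reach $p_n$, and descend back to nearly the same point of $b$ before running close to $b$ for the remainder of its length. In that scenario there is no $\tau_n$ between $\pi_b(q_1)$ and $\pi_b(q_2)$ at distance $\geq \rho_n$ from both, so the closed-up path does not avoid any ball of radius $\rho_n$ centred on $b$. Neither the Pythagorean inequality nor continuity of $\pi_b \circ \beta_n$ rules this out. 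One might salvage the argument by exploiting the explicit piecewise-geodesic structure hidden inside the Charney--Sultan construction behind Proposition~\ref{Proposition: Constructing a close quasi-geodesic}, but as written the proposal does not close this gap; the paper's direct approach via condition~4 sidesteps the issue entirely by handing you the centre and radius from the outset.
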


\begin{figure}[h!]

\begin{tikzpicture}[scale=0.6]
 \tikzstyle{vertex} =[circle,draw,fill=black,thick, inner sep=0pt,minimum size=.5 mm]
[thick, 
    scale=1,
    vertex/.style={circle,draw,fill=black,thick,
                   inner sep=0pt,minimum size= .5 mm},
                  
      trans/.style={thick,->, shorten >=6pt,shorten <=6pt,>=stealth},
   ]

 \draw[thick]  (-10, 0)--(10, 0){};
  \draw[thick]  (0, 0)--(0, 9){};
    \draw[thick, dashed]  (0, 0)--(2.15, 2.15){};
 \draw [thick] (3,0) arc (0:180:3cm);
  \draw[thick, dashed]  (6,0) arc (0:180:6cm);
  \draw[thick]  (-9,0) .. controls (-1.75,1) and (-0.75, 2) .. (0,9);
    \draw [thick] (9,0) .. controls (1.75, 1) and (0.75,2) .. (0,9);

  \node[vertex] at (-9,0)[label=below:$y_{r}$] {}; 
  \node[vertex] at (9,0)[label=below:$z_{r}$] {}; 
   \node[vertex] at (0,9)[label=right:$x_{r}$] {};

  \node[vertex] at (-6,0)[label=below:$a$] {}; 
   \node[vertex] at (-7,0.3)[label=above:$a'$] {}; 
 \node[vertex] at (6,0)[label=below:$c$] {}; 
  \node[vertex] at (7,0.3)[label=above:$c'$] {}; 
  \node[vertex] at (-3,0)[label=below:$p_{1}$] {}; 
    \node[vertex] at (3,0)[label=below:$p_{2}$] {}; 
      \node[vertex] at (0,0)[label=below:$t_{r}$] {}; 
       \node[vertex] at (0,6)[label=below right :$e$] {}; 
       \node[vertex] at (-0.25,7)[label=left:$e'$] {}; 
       \node[vertex] at (0.25,7)[label=right:$e''$] {};

 
  \draw [decorate,decoration={brace,amplitude=10pt},xshift=0pt,yshift=0pt]
  (0,-0.8) -- (-6,-0.8)  node [thick, black,midway,xshift=0pt,yshift=0pt] {};       
 \node at (-3,-2) {$3r \kappa(t_{r})$};
  \node at (1.6,0.7) {$r \kappa(t_{r})$};

  \end{tikzpicture}

\end{figure}

\begin{proof}

Suppose that $b$ is not $\kappa$-slim. In particular, $b$ is not $\kappa$-slim with respect to condition 4. Thus, we have that for any $r >0$, there exist three points $x_r, y_r, z_r \in b$ as in condition 4 with \[||y_r|| \leq ||(x_r)_b||\leq ||z_r||\] and also a point $t_r \in [y_r,z_r]$ such that 

\begin{center}
    
$d(t_r,[y_r,x_r] \cup [z_r,x_r]) \geq r \kappa(z_r) \geq r \kappa(t_r).$

\end{center}

Note that this implies in particular that 
\[
d(y_r, t_r) \geq r \kappa(t_r) \text{ and }
d(z_r, t_r) \geq r \kappa(t_r)
\]

Up to replacing $x_r$ by some $x_r'$ in $[x_r,(x_r)_b],$ we may assume \[d(t_r,[y_r,x_r] \cup [z_r,x_r])=r \kappa(t_r).\] Thus, we have $d(t_r,[y_r,x_r])=r\kappa(t_r)$ or $d(t_r,[z_r,x_r])=r\kappa(t_r)$, without loss of generality, say $d(t_r,[y_r,x_r])=r\kappa(t_r)$. Up to replacing $[x_r,z_r]$ by $[x_r,z_r']$ with $||z_r'|| \leq ||z_r||,$ we may also assume $d(t_r,[x_r,z_r])=r \kappa(t_r)$. Therefore, we have $x_r,y_r,z_r,t_r$ with \begin{center}

$d(t_r,[y_r,x_r])=r\kappa(t_r)$ and $d(t_r,[z_r,x_r])=r\kappa(t_r).$
\end{center}

 Let $a$ be the point on $[y_r,t_r]$ at distance $3r\kappa(t_r)$ from $t_r$ (if no such $a$ exists, choose $a=y_r)$ and let $a'$ be the projection of $a$ on $[y_r,x_r].$ By convexity of the CAT(0) metric, we have \[ d(a,a')=d(a,[y_r,x_r]) \leq d(t_r,[y_r,x_r])=r\kappa(t_r).\] 
 
 Similarly, if $c$ is the point in $[t_r, z_r]$ at distance $3r \kappa(t_r)$ from $t_r$ (or $c=z_r$ if $d(t_r,z_r)< 3 \kappa(t_r))$ and $c'$ is the projection of $c$ on $[x_r,z_r],$ then we have \[d(c,c') \leq r \kappa(t_r).\] Now, let $e$ be the point on $[x_r,t_r]$ at distance $3r \kappa(t_r)$ from $t_r$ (or $e=x_r$ if $d(t_r,x_r)< 3 \kappa(t_r)$)  and let $e', e''$ be the projections of $e$ on $[x_r, y_{r}],[x_r,z_r]$ respectively. By the convexity of the CAT(0) metric, we have 
  \[d(e,e') \leq r\kappa(t_r) \quad \text{ and } \quad d(e,e'')  \leq r\kappa(t_r) .\]  Also, since projections in CAT(0) spaces are distance decreasing, we have 
  \[d(a',e') \leq 6r \kappa(t_r) \quad \text{ and } \quad d(c',e'') \leq 6r \kappa(t_r).\] Now, consider the path $\beta$ connecting $p_1=b(t_r- r \kappa(t_r))$ to $p_2=b(t_r+r \kappa(t_r))$
 \begin{center}
     
 $\beta=[p_1,a] \cup [a,a'] \cup [a',e'] \cup [e',e] \cup [e, e'']\cup [e'',c'] \cup[c',c] \cup [c,p_2]$.
  \end{center}
  The length of this path \[|\beta| \leq 2r \kappa(t_r)+r \kappa(t_r)+6r \kappa(t_r)+r\kappa(t_r)+r\kappa(t_r)+6r\kappa(t_r)+r\kappa(t_r)+2r\kappa(t_r)=20r \kappa(t_r).\]
  
   Therefore, we have $\frac{|\beta|}{\kappa(t_r)} \leq 20r$. It remains to show that $\beta$ lies outside the ball $B=B(t_r,r\kappa(t_r))$. Notice that we either have $d(a, t_r)=3r \kappa(t_r)$, where the distance $d(a,B(t_r, r\kappa(t_r))=2r \kappa(t_r)>r \kappa(t_r)$ and hence the geodesic $[a,a']$ lies outside $B=B(t_r,r\kappa(t_r))$ as $d(a,a') \leq r\kappa(t_r)$; or we have $a= y_r$ and $a$ is outside $B=B(t_r,r\kappa(t_r))$ by the convexity of the CAT(0) metric. Similarly, the geodesics $[c,c'], [e,e']$ and $[e,e'']$ all lie outside $B$. On the other hand, the geodesics $[p_1,a], [a',e'],[e'',c']$ and $[c,p_2]$ also lie outside $B$ by construction.

\end{proof}

\section{$\kappa$--contracting geodesic rays in CAT(0) cube complexes}
The main theorem we aim to prove in this section is Theorem~\ref{thm: 3rd theorem in the intro}, restated here:
\begin{theorem} \label{thm: main theorem section 4}
Let $X$ be a locally finite cube complex. A geodesic ray $b \in X$ is $\kappa$-contracting if and only if there exists $\cc>0$ such that $b$ crosses an infinite sequence of hyperplanes $h_1, h_2,...$ at points $b(t_i)$ satisfying: 

\begin{enumerate}
    \item $d(t_i,t_{i+1}) \leq \cc \kappa(t_{i+1}).$
    \item $h_i,h_{i+1}$ are $\cc \kappa(t_{i+1})$-well-separated.
\end{enumerate}
\end{theorem}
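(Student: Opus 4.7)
The plan is to prove both directions separately, using the hyperplane structure as a combinatorial witness for $\kappa$-contraction and vice versa.

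\textbf{Forward direction ($\kappa$-contracting $\Rightarrow$ sequence).} Assume $b$ is $\kappa$-contracting with constant $\cc_b$. After a standard reduction replacing $b$ by a combinatorial geodesic at bounded distance, I would start with $h_1$ dual to an initial edge of $b$, and construct the sequence inductively. Having chosen $h_1, \ldots, h_i$ crossing $b$ at times $t_1 < \cdots < t_i$, define $t_{i+1}$ to be the smallest $t > t_i$ with $t - t_i \geq \kappa(t)$ such that the hyperplane crossing $b$ at time $t$ is $\cc\kappa(t)$-well-separated from $h_i$, where $\cc$ is to be determined by $\cc_b$. The main technical step is showing that $t_{i+1}$ exists and satisfies $t_{i+1} - t_i \leq \cc\kappa(t_{i+1})$. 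I would argue by contradiction: if no such $t_{i+1}$ exists within the required window, every hyperplane along $b$ in that window fails the well-separation test, yielding a family $\mathcal{K}$ of more than $\cc\kappa(t_{i+1})$ hyperplanes crossing both $h_i$ and some candidate hyperplane with no facing triple. I would then apply a Ramsey-type extraction to $\mathcal{K}$ to obtain either a long chain of pairwise disjoint hyperplanes (producing a product-like region between $h_i$ and the candidate) or a large pairwise-crossing family (producing a high-dimensional combinatorial cube), and in either case construct a point $x$ of controlled distance from $b$ whose disjoint ball has projection onto $b$ too large to satisfy the $\kappa$-contracting inequality.

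\textbf{Backward direction (sequence $\Rightarrow$ $\kappa$-contracting).} Given the sequence, I would verify the criterion of Lemma~\ref{lem: contracting implies projection contracting}. For $x, y \in X$ with $d(x, y) < d(x, b)$, let $x_b = b(s_1)$, $y_b = b(s_2)$ with $s_1 \leq s_2$, and choose indices $i \leq j$ with $t_i \leq s_1 < t_{i+1}$ and $t_{j-1} < s_2 \leq t_j$. The key observation is that each $h_l$ with $i < l < j$ separates $x$ from $y$: indeed $h_l$ separates $x_b$ from $y_b$ by choice, and the standard gate characterization in CAT(0) cube complexes (no hyperplane separating a point from its gate to $b$ can cross $b$) places $x, x_b$ on the same side of $h_l$, and similarly for $y, y_b$. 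This yields $j - i - 1 \leq d_1(x, y)$. The spacing condition (1) then gives $d(x_b, y_b) \leq (j - i)\cc \kappa(t_j)$. To convert this into a bound of the form $\cc'\kappa(x_b)$ independent of $d(x, y)$, I would invoke well-separation (2): any hyperplane crossing the ball $B(x, d(x, b))$ and straddling two consecutive $h_l, h_{l+1}$ lies in a no-facing-triple family of size at most $\cc\kappa(t_{l+1})$, which sharpens the count $j - i$ and yields the desired estimate.

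\textbf{Main obstacle.} The hardest step is the ``combinatorial-to-geometric'' extraction in the forward direction: from a Ramsey-extracted chain or flat within the large no-facing-triple family, one must produce, with precise quantitative control, a ball whose CAT(0) projection onto $b$ is large enough to contradict $\kappa$-contraction. Tracking how pairwise disjoint chains and pairwise crossing flats translate into metric distances in the CAT(0) realization, sharply enough to violate the inequality $\cc_b \kappa(r)$, is the most delicate component, and for the ``furthermore'' statement requires the additional input of a factor system to bootstrap the bound to a uniform constant $\cc'$.
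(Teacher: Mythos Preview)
Your plan diverges substantially from the paper's argument in both directions, and in each case the divergence introduces a real gap.

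\textbf{Forward direction.} The paper does not argue by contradiction via a Ramsey extraction. Instead it proceeds constructively in two steps. First (Lemma~\ref{lem: insert a hyperplane in the middle}), it shows that on any subsegment $[b(s),b(t)]$ of length at least $\cc\kappa(t)$ one can find a hyperplane whose entire CAT(0) projection to $b$ lies inside $[b(s),b(t)]$; this uses $\kappa$-slimness and a convex-hull/gate count. Second, having chosen such $h_i$ and $h_{i+1}$ at a prescribed spacing, it uses Lemma~\ref{lemma: large projection means close} (a consequence of $\kappa$-contraction) to show that \emph{every} geodesic from $h_i$ to $h_{i+1}$ must pass through a ball of radius $\asymp\kappa(t_{i+1})$ about $b(t_{i+1})$; hence every hyperplane meeting both $h_i$ and $h_{i+1}$ meets that ball, and a convex-hull estimate bounds the well-separation constant. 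Your Ramsey route---extracting chains or pairwise-crossing families and building a fat ball with large projection---is plausible in spirit but you have not indicated how to get quantitative control tight enough to contradict the contracting inequality; the paper's geometric route via $\kappa$-slimness sidesteps this entirely.

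\textbf{Backward direction.} Here there is a concrete gap. Your argument yields $j-i-1\le d^{(1)}(x,y)$ and then $d(x_b,y_b)\le (j-i)\,\cc\kappa(t_j)$, which is a bound of order $d(x,y)\cdot\kappa(t_j)$, not $\kappa(x_b)$. Your proposed ``sharpening'' via well-separation of hyperplanes crossing the ball $B(x,d(x,b))$ does not close this: you would need to reduce the factor $j-i$ to a universal constant, and nothing in your sketch does that. (There is also a technical issue: the gate characterization you invoke requires $b$ to be a convex subcomplex, which a CAT(0) geodesic ray in a cube complex need not be.) The paper instead proves $\kappa$-slimness directly via Lemma~\ref{lem: Main lemma to show backward direction}: for $x\in h_i$ and $y\in b$ beyond $b(t_{i+2})$, a careful hyperplane count using the well-separation of the triple $h_i,h_{i+1},h_{i+2}$ shows the geodesic $[x,y]$ meets $h_{i+1}$ within $\dd\kappa(t_{i+1})$ of $b(t_{i+1})$. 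Iterating this along the chain gives slimness, hence $\kappa$-contraction by Proposition~\ref{thin iff contracting}. The key idea you are missing is to control \emph{where} a geodesic from $x$ to a point on $b$ crosses the intermediate hyperplanes, rather than merely to count how many hyperplanes separate $x$ from $y$.
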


First we refer to geodesic ray that satisfies the exact condition of this theorem as  \emph{$\kappa$--excursion geodesics}. The constant $\cc$ will be referred to as the \emph{excursion constant}.

\begin{definition}[\CAT cube complexes]
Given $d \geq 0$, a $d$--cube is a copy of $[0,1]^{d}$, equipped
 with Euclidean metric. Its dimension is $d$. A mid-cube is a subspace obtained by restricting exactly one coordinate to   1/2. Mid-cubes are $(d-1)$--cubes.

A \emph{cube complex} is a CW complex $X$ such that 
\begin{itemize}
\item each cell is a $d$--cube for some $d$;
\item each cell is attached using an isometry of some face.
\end{itemize}
The dimension of $X$ is the supremum of the set of dimensions of cubes of $X$. A \emph{\CAT cube complex} is a cube complex whose underlying space is a \CAT space. We put an equivalence relation on the set of
mid-cubes generated by the condition that two mid-cubes are equivalent if they
share a face. A \emph{hyperplane} is the union of all the mid-cubes in an equivalence
class. 

\end{definition}

Let $X$ be a finite dimensional CAT(0) cube complex (not necessarily uniformly locally finite) of dimension $v$. Let $X^{(1)}$ denote the 1-skeleton of $X$.  We denote the distance function in the CAT(0) space $X$ by $d$, and the $\ell^{1}$-distance by $d^{(1)}.$ A geodesic in the $d$-metric is said to be a \emph{CAT(0) geodesic} while a geodesic in the $d^{(1)}$-metric is said to be a \emph{combinatorial geodesic}. For $x,y \in X^{(1)}$, we let  $\mathcal{C}_{x,y}$ denote the collection of hyperplanes separating $x$ from $y$. The \CAT distance between two points in a \CAT cube complex is multiplicatively related to $\mathcal{C}_{x,y}$:

\begin{lemma} [{\cite[Lemma 2.2]{caprice}}] \label{quasi-isometry between combinatorial and CAT(0) metrics}
Let $X$ be a CAT(0) cube complex, then there exists a constant $\cc$ depending only on the dimension of $X$,  such that for any $x,y \in X$, we have 

\begin{center}
    $d(x,y) \leq |\mathcal{C}_{x,y}| \leq \cc d(x,y)$.
\end{center}
\end{lemma}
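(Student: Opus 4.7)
The plan is to address the two inequalities separately, since they require very different arguments. The lower bound will come from exhibiting a concrete CAT(0) path of length $|\mathcal{C}_{x,y}|$, while the upper bound requires a quantitative cube-by-cube analysis of the CAT(0) geodesic.

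For the lower bound $d(x,y) \leq |\mathcal{C}_{x,y}|$, the first step is to reduce to vertices $x, y \in X^{(0)}$. In that case any combinatorial geodesic $\alpha$ in $X^{(1)}$ from $x$ to $y$ has $\ell^1$-length equal to $|\mathcal{C}_{x,y}|$, because each edge of such a path is dual to a unique hyperplane in $\mathcal{C}_{x,y}$ and a combinatorial geodesic is characterized by never crossing any hyperplane twice. Since each edge of $X$ has CAT(0) length $1$, the path $\alpha$ is a CAT(0) path of length $|\mathcal{C}_{x,y}|$, which gives $d(x,y) \leq |\mathcal{C}_{x,y}|$ immediately. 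The extension to points in $X^{(1)}$ (or in $X$) proceeds by replacing $x, y$ with nearby vertices $v_x, v_y$ in their carriers, using that both $d$ and $|\mathcal{C}_{\cdot,\cdot}|$ change by at most $O(\dim X)$ under such a swap, and passing to the limit.

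For the upper bound $|\mathcal{C}_{x,y}| \leq \cc\, d(x,y)$ with $\cc = \cc(\dim X)$, consider the CAT(0) geodesic $\gamma:[0,d(x,y)] \to X$, which by a basic property of CAT(0) cube complexes crosses every hyperplane in $\mathcal{C}_{x,y}$ exactly once. The strategy is to partition $\gamma$ into maximal subsegments $\gamma_j = \gamma \cap C_j$, each lying in a single cube $C_j \cong [0,1]^{v_j}$ with $v_j \leq \dim X$, and with Euclidean lengths $\ell_j$ satisfying $\sum \ell_j = d(x,y)$. Each $\gamma_j$ is a straight Euclidean segment and therefore crosses at most $v_j \leq \dim X$ hyperplanes. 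The quantitative heart of the argument is a per-cube inequality $k_j \leq \cc_0\, \ell_j$ bounding the number of hyperplanes crossed by $\gamma_j$ in terms of its length. This follows from a Cauchy--Schwarz-type computation once one uses that $\gamma_j$ must enter and exit $C_j$ through genuine $(v_j-1)$-faces, which forces at least one coordinate to move a macroscopic amount and distributes length across the $k_j$ crossed directions. Summing over $j$ yields $|\mathcal{C}_{x,y}| = \sum_j k_j \leq \cc_0 \sum_j \ell_j = \cc_0\, d(x,y)$, as desired.

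The main obstacle, and where the cleanest form of the argument breaks, is at the boundary cubes $C_1$ and $C_N$, where $\gamma$ does not enter or exit through a full face but rather starts at $x$ or ends at $y$. When $x$ and $y$ are vertices this is harmless, since vertices lie on $0$-faces of the respective cubes. For arbitrary points in $X$, however, the per-cube bound in the terminal cubes typically acquires an additive error of size $O(\dim X)$; this error is invisible to the quasi-isometry statement but requires care, and can be absorbed into the constant $\cc$ either by treating the $d(x,y) \geq 1$ regime separately or by reducing to vertices via the approximation in the lower-bound argument above.
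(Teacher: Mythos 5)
The paper does not prove this lemma; it is cited directly from Caprace--Sageev, where the statement is for \emph{vertices} $x,y$ with explicit constant $\cc=\sqrt{\dim X}$. (Indeed, for arbitrary $x,y\in X$ the lower bound $d(x,y)\leq|\mathcal{C}_{x,y}|$ as literally written is false: two distinct points in the interior of one cube separated by no midcube have $|\mathcal{C}_{x,y}|=0$ but $d(x,y)>0$. The vertex hypothesis is not cosmetic.) Your lower-bound argument for vertices is correct and standard.

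There is, however, a genuine gap in your upper bound. The per-cube inequality $k_j\leq\cc_0\,\ell_j$ is false, and your remedy --- that $\gamma_j$ enters and exits $C_j$ through genuine $(v_j-1)$-faces, forcing macroscopic movement --- does not repair it. A geodesic segment inside a single cube can cross a midcube at arbitrarily small cost. Concretely, in $[0,1]^3$ take the straight segment from $(0,\delta,\tfrac12-\delta)$ to $(\delta,0,\tfrac12+\delta)$: it enters through the face $x_1=0$ and exits through the face $x_2=0$ (both genuine $2$-faces), crosses the midcube $x_3=\tfrac12$, and has length $\sqrt{6}\,\delta\to 0$. The entry and exit faces pin down one coordinate each but place no constraint on the transversal coordinates, which can skim their midcubes within $C_j$ at negligible cost; the ratio $k_j/\ell_j$ is therefore unbounded over cubes.

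The repair is a global accounting, and this is exactly where the vertex hypothesis does its real work (not merely as a boundary-cube technicality). The per-cube Cauchy--Schwarz step you propose is sound: if $\ell^1_j$ is the $\ell^1$-length of $\gamma_j$ then $\ell^1_j\leq\sqrt{v_j}\,\ell_j$, and summing gives $\ell^1(\gamma)\leq\sqrt{\dim X}\cdot d(x,y)$. What must be shown, and what cannot be localized to a single cube, is $\ell^1(\gamma)\geq|\mathcal{C}_{x,y}|$. This follows because $\ell^1(\gamma)=\sum_h\operatorname{Var}(f_h\circ\gamma)$, where $f_h$ is the normalized wall coordinate for the hyperplane $h$; and for $h\in\mathcal{C}_{x,y}$ one has $\{f_h(x),f_h(y)\}=\{0,1\}$ precisely because $x$ and $y$ are vertices, so $\operatorname{Var}(f_h\circ\gamma)\geq 1$. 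In other words, the unit cost of crossing $h$ is not charged inside the single cube where $\gamma$ meets $h$; it is spread over the whole portion of $\gamma$ lying in the carrier $N(h)$, which typically spans many cubes. Your proposal collapses that global charge to a per-cube one, and that is where the argument breaks.
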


We also need the following remark.

\begin{remark}\label{remark: convex hull}

If $B^{(1)}$ is a ball of radius $r$ in the $d^{(1)}$--metric, then the $d^{(1)}$ convex hull of  $B^{(1)}$ in $X^{(1)}$ has diameter at most $2vr.$
\end{remark}

\subsection{Counterexample to sublinear bound on separation}
Two hyperplanes are said to be \emph{$k$-separated} if the number of hyperplanes crossing them both is bounded above by $k$. In \cite{ChSu2014}, Charney and Sultan give the following characterization for uniformly contracting geodesic rays in CAT(0) cube complexes. Recall that a geodesic ray $b$ is said to be \emph{uniformly contracting} if it is $\kappa$-contracting with $\kappa=1$.
\begin{theorem}[{\cite[Theorem 4.2]{ChSu2014}}]\label{thm: Charney and Sultan hyperplanes crossing}
Let $X$ be a uniformly locally finite CAT(0) cube complex. There exist
$r > 0,$ $k \geq 0$ such that a geodesic ray $b$ in $X$ is uniformly contracting
if and only if $b$ crosses an infinite sequence of hyperplanes $h_1, h_2, h_3$, . . . at points   $x_i =
b \cap h_i$ satisfying:

\begin{enumerate}
    \item  $h_i
, h_{i+1}$ are $k$-separated and
\item $d(x_i,x_{i+1}) \leq r.$
\end{enumerate}

\end{theorem}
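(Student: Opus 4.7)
The plan is to prove both directions of Charney--Sultan's equivalence using the hyperplane wall structure as a combinatorial bridge to the \CAT geometry of $b$. Throughout, I would pass freely between the \CAT metric and the $\ell^{1}$ metric on $X^{(1)}$ via Lemma~\ref{quasi-isometry between combinatorial and CAT(0) metrics}, at the cost of a multiplicative factor depending on $v=\dim X$.

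For the forward direction, assume $b$ is $D$-uniformly contracting. I would first choose $r$ large enough (depending on $v$) that every segment of $b$ of length $r/2$ crosses at least one hyperplane, define $t_{1}$ as the first crossing time, and inductively take $t_{i+1}$ as the next crossing time after $t_{i}+r/2$, setting $h_{i}$ to be the hyperplane crossed by $b$ at $b(t_{i})$. Condition (2) then holds by construction. To verify uniform $k$-separation, I would argue by contradiction: if no uniform $k$ suffices, then for some pair $h_{i}, h_{i+1}$ there are arbitrarily many hyperplanes crossing both. A long transverse chain forces a ``flat-like'' product structure in the slab bounded by $h_{i}\cup h_{i+1}$, from which, using local finiteness, I would extract two points $p, q$ deep in this slab satisfying $d(p,q)\leq \min(d(p,b),d(q,b))$ but with $\pi_{b}(p)$ near $b(t_{i})$ and $\pi_{b}(q)$ near $b(t_{i+1})$. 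Since $d(b(t_{i}),b(t_{i+1}))\geq r/2$, choosing $r>2D$ at the outset gives a contradiction with $D$-uniform contraction.

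For the backward direction, assume the sequence with constants $r, k$ exists. By Lemma~\ref{lem: contracting implies projection contracting} specialized to $\kappa\equiv 1$, it suffices to uniformly bound $d(x_{b},y_{b})$ whenever $d(x,y)<d(x,b)$. I would classify the hyperplanes $H$ separating $x_{b}$ from $y_{b}$, whose total count is proportional to $d(x_{b},y_{b})$ by Lemma~\ref{quasi-isometry between combinatorial and CAT(0) metrics}. Each such $H$ is either of ``crossing'' type, separating $x$ from $y$ and hence contributing to $d^{(1)}(x,y)\leq v\cdot d(x,b)$, or of ``shadow'' type, where $H$ crosses $b$ inside $[x_{b},y_{b}]$ while $x$ and $y$ sit on the same side. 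A shadow hyperplane crossing $b$ between two consecutive sequence members $h_{j}, h_{j+1}$ must then cross both of them, since nearest-point projections being distance-nonincreasing prevents any hyperplane from separating $x$ from $x_{b}$. The $k$-separation assumption bounds the number of shadow hyperplanes per consecutive pair by $k$, while condition (2) bounds the number of pairs inside $[x_{b},y_{b}]$ by $d(x_{b},y_{b})/r+1$. Combining the two counts forces a uniform upper bound on $d(x_{b},y_{b})$.

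The main obstacle will be making the extraction of the witness points $p, q$ in the forward direction precise. Local finiteness alone is insufficient: one must convert an unbounded transverse chain into genuine metric width in the slab, rather than an accumulating cardinality with no geometric content. The right tool is Remark~\ref{remark: convex hull}, which bounds the diameter of $\ell^{1}$-convex hulls of combinatorial balls and prevents an unbounded family of pairwise transverse hyperplanes from being absorbed into a bounded region. In the backward direction, the subtlety lies in the shadow-vs-crossing dichotomy, which depends on the \CAT uniqueness of projections and the fact that nearest-point projections are distance-nonincreasing; formalizing why a shadow hyperplane between $h_{j}$ and $h_{j+1}$ is actually forced to cross both is where the crucial combinatorial lemma is needed.
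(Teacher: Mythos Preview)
The paper does not prove this theorem; it is quoted verbatim from \cite[Theorem~4.2]{ChSu2014} as background for the counterexample and for the $\kappa$--generalization in Theorem~\ref{thm: main theorem section 4}. So there is no ``paper's own proof'' to compare against here. What the paper \emph{does} prove is the $\kappa$--analogue, and specializing that argument to $\kappa\equiv 1$ gives a clean proof of the present statement (with well-separation in place of separation; for uniformly locally finite complexes and uniform contraction the two notions are interchangeable up to constants).

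Your forward direction has a genuine gap. You select $h_i$ to be \emph{any} hyperplane that $b$ happens to cross near $b(t_i)$, and then try to argue $k$--separation by contradiction. The problem is that a hyperplane crossing $b$ at $b(t_i)$ need not have its nearest-point projection to $b$ concentrated near $b(t_i)$; points of $h_i$ far from $b$ can project arbitrarily far along $b$, and then the witness points $p,q$ you want simply need not exist with the required projection control. The paper's proof of the $\kappa$--version addresses exactly this: Lemma~\ref{lem: insert a hyperplane in the middle} shows that on a sufficiently long subsegment of $b$ one can find a hyperplane whose \emph{entire} projection to $b$ lies inside that subsegment. Choosing the $h_i$ this way is what makes the separation estimate go through (via Lemma~\ref{lemma: large projection means close}, which forces any geodesic between $h_i$ and $h_{i+1}$ to pass through a fixed ball, and then a convex-hull diameter bound as in Remark~\ref{remark: convex hull}). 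Your appeal to Remark~\ref{remark: convex hull} is in the right spirit, but without first pinning down the projections of the $h_i$ it does not bite.

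Your backward direction is closer in spirit to a direct counting argument, and is a legitimate alternative route, but the key step is not justified: the assertion that a ``shadow'' hyperplane crossing $b$ between $h_j$ and $h_{j+1}$ must cross both $h_j$ and $h_{j+1}$ is exactly the content that needs proof, and your parenthetical (``nearest-point projections being distance-nonincreasing prevents any hyperplane from separating $x$ from $x_b$'') is false as stated---plenty of hyperplanes separate $x$ from $x_b$. The paper's approach instead shows $\kappa$--slimness directly via Lemma~\ref{lem: Main lemma to show backward direction} and Corollary~\ref{cor-distancebound}: one bounds, by repeated use of the well-separation constant, the number of hyperplanes separating $[x,y]\cap h_{i+1}$ from $b(t_{i+1})$, which immediately gives the slim-triangle estimate. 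If you want to salvage your counting approach, you would need a lemma of that type to control where the shadow hyperplanes can go.
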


 It may be natural to expect that $\kappa$-contracting geodesic rays are characterized where both $k$ and $r$ are replaced by the $\kappa$-function. This is in fact not the case: 
\begin{example}\label{example: counter example}

Let $\Gamma_n$ be the graph of an $n$-depth binary tree, see Figure~\ref{fig:bt}. Note that $\Gamma_{n}$ embeds into $\Gamma_{n+1}$ by mapping the base-point of one to the other and extending upwards in the natural way.

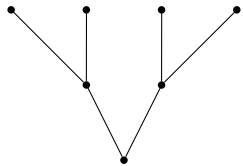
\begin{figure}[h]
\begin{tikzpicture}

\foreach[evaluate={\j=int(2^\y-1)}] \y in {0,1,2} 
	\foreach \x in {0,...,\j} 
		\node at (-\j/2-1/2+\x,\y)[scale=0.3,circle,fill](\x\y){};

\foreach[evaluate={\j=int(2^\y-1)}] \y in {0,1} 
	\foreach \x in {0,...,\j}{
		\pgfmathsetmacro{\PHI}{\y+1}
		\pgfmathsetmacro{\THETA}{int(2*\x)}
		\draw (\x\y) -- ($(\THETA\PHI) - (0.05,0)$);
		\pgfmathsetmacro{\THETA}{int(2*\x+1)}
		\draw (\x\y) -- ($(\THETA\PHI) - (0.05,0)$);
		}
\end{tikzpicture}
\caption{Binary tree $\Gamma_2$}
\label{fig:bt}
\end{figure}

We can now construct the cube complex, $X$. For each $n \in \mathbb{N} \cup \{0\},$ create the following sub-complexes $C_n = \Gamma_n \times [0,2^n]$. Next, for each $n$ glue the second face of $C_n$ (the face $\Gamma_n \times \{2^n\}$) to the first face of $C_{n+1}$ (the face $\Gamma_{n+1} \times \{0\}$) via the embedding $\Gamma_n \hookrightarrow \Gamma_{n+1}$.

Consider the geodesic ray $b(t)$ which is the bottom edge of the complex $X$. i.e. if we orient $X$ so that in each $C_n$ we have all the branches of the binary tree pointing upwards, the image of $b(t)$ is the bottom most edge of the complex (in particular, $b(0)$ is the bottom vertex of $\Gamma_0 \times \{0\})$.

We claim the following about the complex $X$ and $b(t)$:
\begin{enumerate}
\item Every point $x \in C_n$ is at most distance $n$ away from the geodesic $b$.
\item For any $t$, if $b(t) \in C_n$ then $2^{n}-1\leq t \leq 2^{n+1}-1.$
\item The geodesic ray $b$ is $\kappa$-contracting: (1) and (2) implies that the geodesic $b$ satisfies the condition of Lemma~\ref{lem: contracting implies projection contracting}. \noindent (a quick verification of this is to look at $X$ sidelong and the complex will be contained under the curve of $\log_2(x+1)$ which is sublinear). We present the proof of (3) in more details:

\begin{proof} Let $x,y \in X$ so that $d(x, y) \leq d(x,b)$. Now since $x \in X$ there is an $n$ with $x \in C_n$. Claim 1 says that $d(x, b) \leq n$, so we know that $d(x_b, y_b)\leq n$, but since $x_b \in C_n$ as well, by Fact 2 we see that $2^n -1 \leq d(b(0),x_b) = ||x_b||$. This gives us that $n \leq \log_2(||x_b|| + 1)$ and so $d(x_b, y_b) \leq \log_2(||x_b||+1)$. A little algebra implies the existence of a constant $\cc$ so that $d(x_b,y_b) \leq \cc \log_2(x_b)$  which shows that $b$ is $\log_2$-contracting, hence, we are done.

\end{proof}

\item If $h_{i-1}$ and $h_i$ are a pair of hyperplanes in $X$ which intersect $b(t)$ such that $h_{i-1}$ is in $C_n$ and $h_i$ is in $C_m$ with $n \leq m$, then $h_{i-1}$ and $h_i$ are at $(2^{n+1}-2)$--separated but are only $2n$-well separated.

\begin{proof} Since $h_{i-1}$ intersects $b$ it will be isometric to the graph $\Gamma_n$. The midpoint of every edge in $\Gamma_n$ corresponds to a hyperplane that intersects $h_{i-1}$, and this is all of them. Each of these hyperplanes also intersects $h_{i}$ since $h_i$ is either also in $C_n$ or in $C_m$ with $m > n$ and intersects $h_i$ at the same midpoints of the edges of $\Gamma_n \subset \Gamma_m$. Since there are exactly $2^{n+1}-2$ edges in $\Gamma_n$ then $h_{i-1}$ and $h_i$ are $(2^{n+1}-2)$--separated.

Now consider a collection of hyperplanes that intersect $h_{i-1}$. Since $h_{i-1}$ is isometric to $\Gamma_n$ we may consider the intersection points as points in $\Gamma_n$. If the convex hull of these points contains a tripod then there are three hyperplanes in the collection that form a facing triple. If the convex hull doesn't contain a tripod, then the points fall on a single geodesic line. The largest geodesic line inside of $\Gamma_n$ is length $2n$. Thus any collection of hyperplanes with no facing triples can only intersect $2n$ edges and so has cardinality at most $2n$.

\end{proof}

\end{enumerate}
\end{example}

\subsubsection{Proof of Theorem~\ref{theorem:counter example in the intro}} Let $\kappa(t)$ be any sublinear function, let $\cc$ be some constant, and let $\calH$ be a sequence of hyperplanes with 
\[b(t_i)\cap h_i\neq \emptyset  \text{ and } |t_{i}-t_{i+1}| < \cc\kappa(t_{i+1}).\] For each pair of hyperplanes $h_i$ and $h_{i+1}$, let $K(t_{i+1})$ be the smallest number such that they are $K(t_{i+1})$--separated.

For each $t_{i}$ there is a unique integer $n_{i}$, so that $n_{i} \leq \log_2(t_{i}+1) \leq n_{i}+1$. However, this implies the following inequality

\begin{equation} 2^{n_{i}} - 1 \leq t_{i} \leq 2^{n_{i}+1} - 1.
\label{inequality bounds}
\end{equation}

\vspace{12pt}

But by construction, this is simply the statement that $b(t_{i}) \in C_{n_{i}}$ and thus that $h_{i} \in C_{n_{i}}$. It follows immediately that $h_{i+1}$ is in $C_m$ for $m \geq n_i$. We can then apply fact (4) to get that $h_i$ and $h_{i+1}$ are $(2^{n_i+1}-2)$--separated. This tells us that $K(t_{i+1}) = 2^{n_i+1}-2$.

 Using inequality (\ref{inequality bounds}) we get that $t_i-1 \leq K(t_{i+1})$, or in other words that $t_i \leq K(t_{i+1}) + 1$. In particular this means that $ 1 \leq \lim_{t_i \to \infty} \frac{K(t_{i+1})}{t_i}$, and so $K(t_{i+1})$ is asymptotically linear. Since $t_{i+1}-t_i \leq \cc \kappa(t_{i+1})$, and since $\kappa$ is a sublinear function, we can choose $i$ large enough so that $t_{i+1} \leq t_i + \cc \kappa(t_{i+1})\leq t_i+ \cc \frac{t_{i+1}}{2\cc}$ and hence $\frac{t_{i+1}}{2} \leq t_i$ or $t_{i+1} \leq 2t_{i}$. This implies that

 $$ 1 \leq \lim_{t_i \to \infty} \frac{K(t_{i+1})}{t_i} \leq \lim_{t_i \to \infty} \frac{2K(t_{i+1})}{t_{i+1}},$$ and hence $\lim_{t_i \to \infty} \frac{K(t_{i+1})}{t_{i+1}} \geq \frac{1}{2}$. Since $\kappa(t_{i+1})$ is a sublinear function, we get that $K(t_{i+1})$ strictly dominates $\kappa(t_{i+1})$ for large enough values of $t_i$.

On the other hand, by Lemma~\ref{lem: contracting implies projection contracting}, $b(t)$ is a $\log_2$-contracting geodesic. More precisely, if we take the sequence of hyperplanes that intersect $b(t)$ we can see that $h_i$ and $h_{i+1}$ are at most $2\log_2(t_i+1)$-well separated, since the longest chain of hyperplanes which intersect both is at most twice the height of the binary tree that $h_i$ lives inside.

\subsection{The characterization theorem.}
Now we are ready to present and prove Theorem~\ref{thm: 3rd theorem in the intro} in the introduction. 

\begin{definition}[Facing Triples]\label{def:facing triples}
A collection of three hyperplanes $h_1, h_2, h_3$ is said to form a \emph{facing triple} if they are disjoint and none of the three hyperplanes separates the other two.
\end{definition}

   Notice that if a (combinatorial or CAT(0)) geodesic $b$ crosses three disjoint hyperplanes $h_1,h_2$ and $h_3$ respectively, then $h_2$ separates $h_1$ and $h_3.$ In particular, a geodesic $b$ cannot cross a facing triple. Conversely, if $\mathcal{C}$ is a collection of hyperplanes  which contains no facing triple, then there is a geodesic which crosses a \emph{definite proportion} of the hyperplanes in $\mathcal{C}$:
    
    \begin{lemma}[{\cite[Corollary 3.4]{Hagen20}}]\label{lem:definite proportion} Let $X$ be a CAT(0) cube complex of dimension $v$. There exists a constant $k$, depending only on $v$, such that the following holds.  If $\mathcal{C}$ is a collection of hyperplanes which contains no facing triple, then there exists a (combinatorial or CAT(0)) geodesic which crosses at least $\frac{\mathcal{|C|}}{k}$ hyperplanes from the collection $\mathcal{C}.$
    
    \end{lemma}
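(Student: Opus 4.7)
The plan is to partition $\mathcal{C}$ into a bounded number of linearly separated chains of hyperplanes, each of which is crossed by a single geodesic, and then pigeonhole out a chain of size at least $|\mathcal{C}|/k$. To set this up, fix a basepoint $x_0 \in X$ not lying on any hyperplane of $\mathcal{C}$, and for each $h \in \mathcal{C}$ let $h^-$ denote the halfspace of $h$ containing $x_0$ and $h^+$ the opposite halfspace. Define $h \prec h'$ to mean that $h, h'$ are disjoint and $h' \subset h^+$; equivalently, $h$ separates $x_0$ from $h'$.

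First I would verify that $\prec$ is a strict partial order on $\mathcal{C}$. Antisymmetry holds because $h \prec h'$ and $h' \prec h$ simultaneously would force each of the two disjoint hyperplanes to lie strictly beyond the other as seen from $x_0$, which is geometrically impossible. For transitivity, if $h_1 \prec h_2 \prec h_3$, then $h_1 \subset h_2^-$ and $h_3 \subset h_2^+$, so $h_1$ and $h_3$ lie in opposite halfspaces of $h_2$ and are therefore disjoint; moreover, convexity of halfspaces yields $h_2^+ \subset h_1^+$, so $h_3 \subset h_1^+$ and $h_1 \prec h_3$.

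Next I would bound the width of $(\mathcal{C}, \prec)$ using the no-facing-triple hypothesis. Two elements of $\mathcal{C}$ are incomparable under $\prec$ exactly when they are either (i) crossing, or (ii) disjoint with each lying in the other's $x_0$-containing halfspace. A pairwise-crossing family embeds a $v$-cube, so has cardinality at most $v$. On the other hand, three hyperplanes pairwise in case (ii) form a facing triple: each of the three lies in the $x_0$-halfspace of the other two, which means none of the three separates the other two. The hypothesis therefore forbids such triples, so within any antichain the case-(ii) edges form a triangle-free graph, and a Ramsey argument (with parameters $R(v+1,3)$) bounds the size of every antichain by some $k=k(v)$ depending only on $v$.

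Finally, Dilworth's theorem partitions $\mathcal{C}$ into at most $k(v)$ chains under $\prec$. Each chain $h_{i_1} \prec h_{i_2} \prec \cdots$ is a pairwise disjoint, linearly separated sequence, and any geodesic (combinatorial or \CAT) from $x_0$ to a point in $h_{i_m}^+$ crosses every hyperplane in the chain. Pigeonhole then yields a chain of size at least $|\mathcal{C}|/k(v)$, as desired. The main obstacle is the antichain bound in the third step: the dimension bound controls the pairwise-crossing side, but converting the no-facing-triple hypothesis into the triangle-free statement requires the case-analysis above showing that three hyperplanes each containing the other two in its $x_0$-halfspace is \emph{exactly} a facing triple, irrespective of the particular choice of basepoint.
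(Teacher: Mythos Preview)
The paper does not supply its own proof of this lemma; it is quoted verbatim from \cite[Corollary~3.4]{Hagen20} and used as a black box. So there is nothing in the present paper to compare your argument against.

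That said, your argument is correct and self-contained. The Dilworth--Ramsey strategy you outline is the standard route to this statement: bound the width of the separation poset $(\mathcal{C},\prec)$ by a function of the dimension, apply Dilworth to extract a long chain, and observe that any geodesic from $x_0$ into the innermost halfspace $h_{i_m}^+$ crosses every member of the chain. Two small remarks. First, the worry you flag at the end is not actually a worry: you only need the one direction you prove, namely that a case-(ii) triangle in the incomparability graph \emph{is} a facing triple (since all three hyperplanes lie in the same halfspace $h_j^-$ of each $h_j$, none separates the other two); the converse is irrelevant. Second, Dilworth's theorem in the form you invoke needs $\mathcal{C}$ finite. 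This is implicit in the statement (the quantity $|\mathcal{C}|/k$ would otherwise be meaningless) and is explicitly verified in the paper's only use of the lemma, but you should say so.
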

 
\begin{definition}[Well-separated hyperplanes]
Two disjoint hyperplanes $h_1, h_2$ are said to be \emph{$k$-well-separated} if any collection of hyperplanes intersecting both $h_1, h_2$, and which does not contain any facing triple, has cardinality at most $k$. We say that $h_1$ and $h_2$ are \emph{well-separated} if they are $k$-well-separated for some $k.$
\end{definition}

\begin{definition}[Combinatorial projections]

Let $X$ be a CAT(0) cube complex and let $X^{(1)}$ be its 1-skeleton with the $d^{(1)}$-metric. Let $Z$ be a convex sub-complex. For all $x \in X^{(0)}$, the point $\sg_Z(x) \in Z$ is defined to be the closest vertex in $Z$ to $x$. The vertex $\sg_Z(x)$ is unique and it is characterized by the property that a hyperplane $h$ separates $\sg(x)$ from $x$ if and only if $h$ separates $x$ from $Z$, for more details see \cite{Behrstock2017}.
\end{definition}

The following proposition states that for a convex sub-complex $Z$ and for $x,y \notin Z$, if a hyperplane $h$ separates $x$ and $y$ then either $h$ doesn't intersect $Z$, or if does, it separates $\sg_Z(x)$ and $\sg_Z(y).$

 \begin{proposition}[{\cite[Proposition 2.7]{Genevois2020}}]\label{key: hyperplanes intersecting a convex subcomplex}
 Let $Z$ be a convex sub-complex in a CAT(0) cube complex $X$ and let $x,y $ be two vertices of $X^{(1)}$. The hyperplanes separating $\sg_Z(x)$ and $\sg_Z(y)$ are precisely the ones separating $x$ and $y$ which intersect $Z.$

\end{proposition}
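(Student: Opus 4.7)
The plan is to prove both directions by leveraging the characterization of the gate projection $\sg_Z$ stated just before the proposition: namely, a hyperplane $h$ separates $x$ from $\sg_Z(x)$ if and only if $h$ separates $x$ from the entire sub-complex $Z$ (i.e., $Z$ lies entirely on one side of $h$ with $x$ on the other). This one fact, combined with convexity of $Z$, is essentially all that is needed; no geodesic arguments are required.

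For the forward direction, suppose a hyperplane $h$ separates $\sg_Z(x)$ from $\sg_Z(y)$. First I observe that $h$ must cross $Z$: both gate points lie in $Z$ and on opposite sides of $h$, and since $Z$ is convex it cannot lie entirely on one side of $h$. Now since $h$ intersects $Z$, $h$ does not separate $x$ from $Z$, so by the gate characterization $h$ does not separate $x$ from $\sg_Z(x)$; that is, $x$ lies on the same side of $h$ as $\sg_Z(x)$. The same reasoning applied to $y$ shows $y$ lies on the same side of $h$ as $\sg_Z(y)$. Since $\sg_Z(x)$ and $\sg_Z(y)$ are on opposite sides of $h$, so are $x$ and $y$, which yields the forward implication together with the fact that $h$ crosses $Z$.

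For the reverse direction, suppose $h$ separates $x$ from $y$ and $h$ intersects $Z$. Because $h$ crosses $Z$, $h$ does not separate $x$ from $Z$, so by the gate characterization $h$ does not separate $x$ from $\sg_Z(x)$; similarly, $h$ does not separate $y$ from $\sg_Z(y)$. Thus $x$ and $\sg_Z(x)$ lie on a common side of $h$, and likewise for $y$ and $\sg_Z(y)$. Since $x$ and $y$ sit on opposite sides of $h$, the points $\sg_Z(x)$ and $\sg_Z(y)$ must also sit on opposite sides of $h$, so $h$ separates the two gate points.

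The only genuinely nontrivial step is the observation at the start of the forward direction: a hyperplane that separates two points of $Z$ must cross $Z$. Strictly speaking this uses the standard fact that the carrier (or halfspace) structure associated to a hyperplane restricts to a partition of $Z$, and convexity of $Z$ in $X$ implies $h \cap Z$ is itself a hyperplane of $Z$ whenever $h$ meets $Z$ in more than a single side. Once this is in hand, the rest of the argument is a clean bookkeeping of sides using only the defining property of the gate projection.
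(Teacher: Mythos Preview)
Your argument is correct. Note, however, that the paper does not actually prove this proposition: it is quoted as \cite[Proposition 2.7]{Genevois2020} and used as a black box, so there is no proof in the paper to compare against. What you have written is the standard proof of this gate-projection fact, relying only on the defining property of $\sg_Z$ recalled immediately before the proposition together with the fact that a convex subcomplex lying on both sides of a hyperplane must be crossed by that hyperplane. Both directions are handled cleanly; the last paragraph about carriers and induced hyperplanes of $Z$ is more than is needed, since for combinatorially convex $Z$ the statement ``$Z$ contains vertices in both halfspaces of $h$ $\Rightarrow$ $h\cap Z\neq\emptyset$'' follows directly from the description of convex subcomplexes as intersections of halfspaces.
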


We proceed with a few lemmas working towards the forward direction of the argument.

%
%
%

\begin{lemma}\label{lem: insert a hyperplane in the middle}
Let $X$ be a CAT(0) cube complex of dimension $v$ and suppose $b$ is a $\kappa$-contracting geodesic ray in $X$. There exists a constant $\cc$ such that if $t, s \in \mathbb{R}$ are such that $t-s \geq \cc \kappa(t)$, then $[b(s), b(t)]$ crosses a hyperplane whose nearest-point projection to $b$ is entirely between $b(s)$ and $b(t).$

\end{lemma}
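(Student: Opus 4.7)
The plan is to locate a hyperplane that crosses $b$ well inside $[b(s),b(t)]$ and then use the $\kappa$-contracting property to argue its projection onto $b$ cannot escape the interval.

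The heart of the argument is an auxiliary sublinear bound: there is a constant $C_0$, depending only on the $\kappa$-contracting constant of $b$, such that for every hyperplane $h$ crossed by $b$ at $b(t_h)$, the projection $\pi_b(h)$ is contained in $[b(t_h-C_0\kappa(\tau)),b(t_h+C_0\kappa(\tau))]$, where $\tau$ is an endpoint of $\pi_b(h)$. To establish this, suppose $b(\beta)\in\pi_b(h)$ with $\beta>t_h$ and pick $p\in h$ with $\pi_b(p)=b(\beta)$. Since hyperplanes in a \CAT cube complex are convex and both $p$ and $b(t_h)$ lie in $h$, the geodesic $[p,b(t_h)]$ lies in $h$. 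Applying the $\kappa$-slim property (Corollary~\ref{310}) to $x=p$ and $y=b(t_h)$ yields $d(b(\beta),h)\leq \cc\kappa(\beta)$, so there is a point $q\in h$ within $\cc\kappa(\beta)$ of $b(\beta)$. Iterating this along $h$ in the direction of $b(t_h)$, and controlling the projections at each step via Lemma~\ref{lem: contracting implies projection contracting} together with the multiplicative stability of $\kappa$ from Proposition~\ref{lemma: constants}, produces the desired sublinear bound on $\beta-t_h$. The symmetric argument handles the other endpoint.

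With the auxiliary bound in hand, set $\cc=4C_0$ and assume $t-s\geq \cc\kappa(t)$. The sub-interval of $b$ between $b(s+2C_0\kappa(t))$ and $b(t-2C_0\kappa(t))$ then has length at least $\cc\kappa(t)/2$, so by Lemma~\ref{quasi-isometry between combinatorial and CAT(0) metrics} some hyperplane $h_m$ crosses $b$ at a point $b(t_m)$ inside it. Writing $\pi_b(h_m)=[b(\alpha_m),b(\beta_m)]$, the left endpoint satisfies $\alpha_m\geq t_m-C_0\kappa(\alpha_m)\geq t_m-C_0\kappa(t)\geq s$ by monotonicity of $\kappa$. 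For the right endpoint, if one had $\beta_m>t$, then concavity of $\kappa$ gives $\kappa(\beta_m)\leq \kappa(t)(1+(\beta_m-t)/t)$, and the auxiliary bound $\beta_m-t_m\leq C_0\kappa(\beta_m)$ combined with $t_m\leq t-2C_0\kappa(t)$ forces $(\beta_m-t)(1-C_0\kappa(t)/t)\leq -C_0\kappa(t)$, which is impossible once $t$ is large compared to $\kappa(t)$ (a condition automatic from $t-s\geq \cc\kappa(t)$ with $\cc$ large enough). Hence $\pi_b(h_m)\subseteq[b(s),b(t)]$, as required.

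The main obstacle is the auxiliary claim: one application of the $\kappa$-slim condition only yields a single point $q\in h$ close to $b(\beta)$, whereas to bound $\beta-t_h$ one must iterate along $h$ and show that the successive projections telescope toward $b(t_h)$. The convexity of $d(\cdot,b)$ along the convex set $h$ ensures that each iteration brings the working point definitively closer to $b$, and sublinearity of $\kappa$ keeps the cumulative error sublinear; carefully organizing this iteration so that the total drift in projection is controlled by $\cc\kappa(\beta)$ is the principal technical task.
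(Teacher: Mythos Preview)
Your auxiliary claim is attractive---a uniform $\kappa$-bound on the projection of every hyperplane crossing $b$ would indeed make the lemma immediate---but the iteration you outline does not establish it. One application of $\kappa$-slimness to $x=p$, $y=b(t_h)$ produces $q\in[p,b(t_h)]\subset h$ with $d(q,b(\beta))\le \cc\kappa(\beta)$; in particular $q$ is already within $\cc\kappa(\beta)$ of $b$, and its projection $q_b=b(\gamma)$ satisfies only $|\gamma-\beta|\le\cc\kappa(\beta)$. A second application to $x=q$ yields a point within $\cc\kappa(\gamma)\approx\cc\kappa(\beta)$ of $b(\gamma)$, which is still near $b(\beta)$. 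The iteration stabilizes near $b(\beta)$ rather than drifting toward $b(t_h)$: you have shown that the \emph{distance to $b$} drops to $O(\kappa(\beta))$ after one step, but this says nothing about the \emph{parameter} of the projection, which is what you need to decrease from $\beta$ to $t_h$. The convexity of $d(\cdot,b)$ on $h$ guarantees only the former. So the ``principal technical task'' you flag is not a matter of careful bookkeeping---the mechanism itself does not move.

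The paper sidesteps this by never bounding the projection of a single hyperplane. Instead it \emph{counts} how many hyperplanes crossing $[b(s),b(t)]$ can project outside. The first step matches yours: if $h$ contains a point projecting to $b(t)$, slimness forces $h$ to meet the ball $B_{\cc_1\kappa(t)}(b(t))$. The second step is combinatorial: pass to the $d^{(1)}$-metric, take the convex hull $Z$ of the corresponding combinatorial ball (diameter $\le 2v\cc_2\kappa(t)$ by Remark~\ref{remark: convex hull}), and use the gate map (Proposition~\ref{key: hyperplanes intersecting a convex subcomplex}) to see that at most $\mathrm{diam}(Z)$ hyperplanes from the chain $\mathcal{C}$ can meet $Z$. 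Thus at most $4v\cc_2\kappa(t)$ hyperplanes are ``bad'', and taking $\cc$ large enough so that $|\mathcal{C}|$ exceeds this count finishes the proof. The cube-complex structure enters essentially here, converting a metric neighborhood into a diameter bound on hyperplane intersections; your purely CAT(0) iteration has no analogue of this step.
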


\begin{proof}
Let $\mathcal{C}$ be the collection of hyperplanes which cross $[b(s), b(t)]$. Let $h \in \mathcal{C},$ and define $z=h \cap b$. If $h$ projects outside $[b(s), b(t)]$, then it contains a point $x$ with $x_b=b(t)$ or $x_b=b(s)$. If $h$ contains a point $x$ with   $x_b=b(t)$, then using $\kappa$-slimness of the triangle $\Delta=\Delta (x, b(t),z),$ there exists a constant $\cc_1$ such that $h \cap B_{\cc_1 \kappa(t)}(b(t)) \neq \emptyset.$ Since the CAT(0) metric is
bounded above and below by linear functions of the $d^{(1)}$-metric, depending only on the dimension $v$, there exists a constant $\cc_2=\cc_2(v, \cc_1)$ such that the ball $B_{\cc_1 \kappa(t)}(b(t))$ is contained in a combinatorial ball  $B^{(1)}$ of radius $\cc_2 \kappa(t)$.

By Remark \ref{remark: convex hull}, the diameter of the convex hull $Z$ of $B^{(1)}$ is at most $2v\cc_2 \kappa(t),$ where $v$ bounds the dimension of the cube complex.

 Let $x, y \in X^{(1)}$ be two vertices which are separated by every hyperplane in $\mathcal{C}$. Since $Z$ has diameter at most $2v\cc_2 \kappa(t)$, we have \[d(\sg_{Z}(x), \sg_{Z}(y)) \leq 2v\cc_2 \kappa(t).\] Also, using Proposition~\ref{key: hyperplanes intersecting a convex subcomplex}, any hyperplane $h \in \mathcal{C}$ that crosses $Z$ separates $\sg_{Z}(x), \sg_{Z}(y)$. Thus, the number of such hyperplanes is at most $diam(Z) \leq 2v\cc_2 \kappa(t)$.

  Therefore, $2v\cc_2 \kappa(t)$ bounds the number of hyperplanes $h \in \mathcal{C}$ that have a projection point to $b(t).$ Similarly, $2v\cc_2 \kappa(s)$ bounds the number of hyperplanes $h \in \mathcal{C}$ which project to $b(s)$. In turns, the number of hyperplanes $h \in \mathcal{C},$ projecting outside $[b(s), b(t)]$ is at most $2v\cc_2 (\kappa(s)+\kappa(t)) \leq 4v\cc_2 \kappa(t),$ where $\cc_2$ depends only on $v$ and $\cc_1.$ Therefore, choosing $\cc$ so that the number of hyperplanes crossed by $[b(s),b(t)]$ exceeds $4v\cc_2$ gives the desired result.

\end{proof}

\begin{lemma} \label{lemma: travelling close to geodesics}
Let $b$ be a $\kappa$-contracting geodesic ray with constant $\cc$ and let $x,y \in X$ and not in $b$ such that $d(y_b,b(0)) \geq d(x_b,b(0))$. If $d(x_b,y_b) \geq 3\cc \kappa(y_b),$ then we have the following:

$$d(x,y) \geq d(x, x_b)+d(x_b, y_b)+d(y_b, y)-\cc \kappa(x_{b})-\cc \kappa(y_{b}).$$

\end{lemma}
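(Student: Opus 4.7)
The plan is to introduce auxiliary points $p, q$ on the geodesic $[x,y]$ just inside the balls of radii $d(x,x_b)$ and $d(y,y_b)$ about $x$ and $y$, so that Lemma~\ref{lem: contracting implies projection contracting} can be applied at each end, and then lower-bound $d(x,y)$ by decomposing it as $d(x,p)+d(p,q)+d(q,y)$ with $d(p,q) \geq d(p_b,q_b)$. First I would observe that both $d(x,y) \geq d(x,x_b)$ and $d(x,y) \geq d(y,y_b)$. Indeed, if $d(x,y) < d(x,b) = d(x,x_b)$, then Lemma~\ref{lem: contracting implies projection contracting} would give $d(x_b,y_b) \leq \cc\kappa(x_b) \leq \cc\kappa(y_b)$, using monotonicity of $\kappa$ together with $\|y_b\| \geq \|x_b\|$, contradicting the hypothesis $d(x_b,y_b) \geq 3\cc\kappa(y_b)$; the symmetric argument handles the other inequality.

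Next I would pick $p, q \in [x,y]$ with $d(x,p) = d(x,x_b) - 1$ and $d(q,y) = d(y,y_b) - 1$; these exist by the previous step. Since $d(x,p) < d(x,b)$ and $d(y,q) < d(y,b)$, Lemma~\ref{lem: contracting implies projection contracting} supplies $d(x_b,p_b) \leq \cc\kappa(x_b)$ and $d(y_b,q_b) \leq \cc\kappa(y_b)$. The heart of the argument is verifying that $p$ and $q$ occur in the expected order on $[x,y]$, namely $d(x,p) + d(q,y) \leq d(x,y)$. If they were out of order, I would have $d(p,y) = d(x,y)-d(x,p) < d(y,y_b) = d(y,b)$, so a second application of Lemma~\ref{lem: contracting implies projection contracting}, this time to the pair $(y,p)$, would also yield $d(y_b,p_b) \leq \cc\kappa(y_b)$. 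Combined with $d(x_b,p_b) \leq \cc\kappa(x_b) \leq \cc\kappa(y_b)$ and the triangle inequality this gives $d(x_b,y_b) \leq 2\cc\kappa(y_b)$, contradicting the hypothesis.

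With $p$ and $q$ in the correct order one has $d(p,q) = d(x,y) - d(x,x_b) - d(y,y_b) + 2$, and since the nearest-point projection onto $b$ is $1$-Lipschitz by Lemma~\ref{Lem:CAT},
\[
d(p,q) \geq d(p_b,q_b) \geq d(x_b,y_b) - d(x_b,p_b) - d(y_b,q_b) \geq d(x_b,y_b) - \cc\kappa(x_b) - \cc\kappa(y_b).
\]
Rearranging produces the desired lower bound, with the stray additive $+2$ absorbed into the sublinear terms by means of $\kappa\geq 1$ (enlarging $\cc$ by a small amount if necessary). The main obstacle is the order-verification step in the second paragraph, which is precisely where the quantitative threshold $3\cc\kappa(y_b)$ is used: without a definite gap between the two projections, one cannot guarantee that the geodesic $[x,y]$ first travels close to $b$ near $x_b$ and only afterwards returns near $y_b$.
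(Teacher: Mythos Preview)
Your argument is correct and follows essentially the same route as the paper's: introduce two auxiliary points on $[x,y]$ near $x$ and $y$, control their projections via the contracting property, check they occur in the right order, and then use that nearest-point projection to $b$ is $1$-Lipschitz. The paper takes the auxiliary points $x',y'$ at the exact distances $d(x,x_b),\,d(y,y_b)$ (the contracting bound extends to the closed ball), thereby avoiding your stray additive constant and yielding the stated $\cc$ on the nose, and it handles the ordering in one stroke by assuming $d(x,y)<d(x,x_b)+d(y,y_b)$ and choosing a single point $z\in[x,y]$ lying inside both open balls, which produces the same $2\cc\kappa(y_b)$ contradiction you obtain.
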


\begin{proof}
First, we will show that $d(x,y) \geq d(x,x_b)+d(y,y_b).$ For the sake of contradiction, suppose that $d(x,y) < d(x,x_b)+d(y,y_b).$ This implies the existence of a point $z \in [x,y]$ such that $d(z,x)<d(x,x_b)$ and $d(z,y)<d(y,y_b).$ Since our geodesic $b$ is $\kappa$-contracting, we obtain that $d(x_b, z_b) \leq \cc \kappa(x_b)$ and $d(y_b,z_b) \leq \cc \kappa(y_b)$. This implies that $$d(x_b,y_b) \leq d(x_b, z_b)+ d(z_b, y_b) \leq \cc \kappa(x_b)+ \cc \kappa(y_b) \leq 2\cc \kappa(y_b),$$ contradicting the assumption that $d(x_b,y_b) \geq 3\cc \kappa(y_b).$ Thus we have shown that 
\[
d(x,y) \geq d(x,x_b)+d(y,y_b).
\]

Let $x'$ be the point on $[x,y]$ at distance $d(x, x_b)$ from $x$. Similarly, let $y'$ be the point on $[x,y]$ at distance $d(y, y_b)$ from $y$. The projections of $[x,x']$ and $[y',y]$ to $b$ have diameters at most  $\cc \kappa(x_b)$ and $\cc \kappa(y_b)$, respectively. Since projections in CAT(0) are distance decreasing, we have $$d(x',y') \geq d(x'_b, y'_b) \geq d(x_b, y_b)-\cc\kappa(x_b)-\cc \kappa(y_b).$$ 

Now, we have 
\begin{align*}
d(x,y)&=d(x,x')+d(x',y')+d(y',y) \\
         &=d(x, x_b)+d(x',y')+d(y, y_b) \\
         & \geq d(x, x_b)+d(x_b, y_b)-\cc\kappa(x_b)-\cc \kappa(y_b)+d(y, y_b).
         \end{align*}

\end{proof}

\begin{lemma} \label{lemma: large projection means close}
Let $b$ be a $\kappa$-contracting geodesic ray  starting at $\go$ with constant $\cc$. Let $x,y \in X$ and not in $b$ such that $d(\go,x_b) \leq d(\go,y_b)$ and $d(x_b,y_b) \geq 4 \cc \kappa(y_b).$ We have the following:

\begin{enumerate}
    \item  $d(x,y) \geq d(x, x_b)+d(x_b, y_b)+d(y_b, y)-2\cc \kappa(y_{b}).$
\item  $\pi_{b}([x,y]) \subseteq N_{5\cc\kappa(y_b)}([x,y]). $

\end{enumerate}

\end{lemma}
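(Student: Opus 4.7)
Part (1) is an immediate consequence of Lemma~\ref{lemma: travelling close to geodesics}. Since $d(x_b, y_b) \geq 4\cc\kappa(y_b) \geq 3\cc\kappa(y_b)$, that lemma yields
\[ d(x,y) \geq d(x,x_b) + d(x_b, y_b) + d(y_b, y) - \cc\kappa(x_b) - \cc\kappa(y_b). \]
Because $||x_b|| \leq ||y_b||$ and $\kappa$ is monotone increasing, $\kappa(x_b) \leq \kappa(y_b)$, so the correction term is at least $-2\cc\kappa(y_b)$, which proves (1).

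For Part (2), the plan is a two-step argument: first bound the distance from each of the endpoints $x_b, y_b$ to $[x,y]$ by a multiple of $\kappa(y_b)$, then extend this bound to the entire segment $[x_b, y_b]$ via convexity of the distance-to-convex-set function. Let $x' \in [x,y]$ be the point with $d(x, x') = d(x, x_b) = d(x, b)$. The $\kappa$-contracting property (Definition~\ref{Def:Contracting}) gives $d(x_b, x'_b) \leq \cc\kappa(x_b) \leq \cc\kappa(y_b)$. The strengthened hypothesis $d(x_b, y_b) \geq 4\cc\kappa(y_b)$ together with this estimate guarantees $||x'_b|| \leq ||y_b||$ and $d(x'_b, y_b) \geq 3\cc\kappa(y_b)$, so Lemma~\ref{lemma: travelling close to geodesics} applies to the pair $(x', y)$. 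Combining the resulting lower bound on $d(x', y)$ with the identity $d(x', y) = d(x, y) - d(x, x_b)$ and the triangle-inequality upper bound $d(x, y) \leq d(x, x_b) + d(x_b, y_b) + d(y_b, y)$ produces $d(x', x'_b) \leq 3\cc\kappa(y_b)$, so $d(x_b, x') \leq d(x_b, x'_b) + d(x'_b, x') \leq 4\cc\kappa(y_b)$. A symmetric argument on the $y$-side produces a point $y' \in [x,y]$ with $d(y_b, y') \leq 4\cc\kappa(y_b)$.

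The function $w \mapsto d(w, [x,y])$ is convex on $X$ by Lemma~\ref{Lem:CAT}(3), so its restriction to the geodesic $b$ is a convex function of the arc-length parameter. Since this function is bounded above by $4\cc\kappa(y_b)$ at both endpoints $x_b$ and $y_b$ of the sub-segment $[x_b, y_b] \subset b$, convexity yields the same bound on the entire sub-segment. Because the projection of a geodesic onto a geodesic is monotone in a CAT(0) space, $\pi_b([x,y]) \subseteq [x_b, y_b]$, and every $p \in \pi_b([x,y])$ therefore satisfies $d(p, [x,y]) \leq 4\cc\kappa(y_b) < 5\cc\kappa(y_b)$, as required.

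The main technical point is verifying that the hypotheses of Lemma~\ref{lemma: travelling close to geodesics} still hold when it is applied to the perturbed pair $(x', y)$: the projection $x'_b$ may have drifted from $x_b$ by as much as $\cc\kappa(y_b)$, and we need the residual separation $d(x'_b, y_b) \geq 3\cc\kappa(y_b)$ to invoke the lemma. The strengthened hypothesis $d(x_b, y_b) \geq 4\cc\kappa(y_b)$ (rather than $\geq 3\cc\kappa(y_b)$ as in Lemma~\ref{lemma: travelling close to geodesics}) provides exactly the $\cc\kappa(y_b)$ of buffer needed to absorb this drift, which is what allows the two-step argument to go through.
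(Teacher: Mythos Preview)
Your proof is correct and follows essentially the same strategy as the paper: introduce the point $x'\in[x,y]$ with $d(x,x')=d(x,b)$ (and symmetrically $y'$), use the contracting property together with Lemma~\ref{lemma: travelling close to geodesics} to bound $d(x',x'_b)$ and $d(y',y'_b)$, and then appeal to CAT(0) convexity to propagate the bound along a segment. The only differences are organisational: the paper applies Part~(1) once to the pair $(x',y')$ and obtains the combined bound $d(x',x'_b)+d(y',y'_b)\leq 4\cc\kappa(y_b)$, whereas you apply Lemma~\ref{lemma: travelling close to geodesics} separately to $(x',y)$ and $(x,y')$; and the paper invokes convexity of $d(\cdot,b)$ along $[x',y']$ and then adds $\cc\kappa(y_b)$ to reach $[x_b,y_b]$, whereas you invoke convexity of $d(\cdot,[x,y])$ directly along $[x_b,y_b]$. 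Both routes give the same constants and the same conclusion.
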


\begin{figure}[H]
\begin{tikzpicture}[scale=0.4]
\tikzstyle{vertex} =[circle,draw,fill=black,thick, inner sep=0pt,minimum size=.5 mm]
 
[thick, 
    scale=1,
    vertex/.style={circle,draw,fill=black,thick,
                   inner sep=0pt,minimum size= .5 mm},
                  
      trans/.style={thick,->, shorten >=6pt,shorten <=6pt,>=stealth},
   ]

%
%
%

%
%

\draw[->, thick, black] (-10, 0) to (10, 0){};
\node [vertex] at (-8, 8)[label=left:$x$]{};
\node [vertex] at (8, 8)[label=right:$y$]{};

\draw[-, thick, black] (-8, 8) to [bend left=15](-6, 0){};
\draw[-, thick, black] (8, 8) to [bend right=15](6, 0){};
\node [vertex] at (-6, 0)[label=below:$x_{b}$]{};
\node [vertex] at (6, 0)[label=below:$y_{b}$]{};
{};
 \draw [decorate,decoration={brace,amplitude=10pt},xshift=0pt,yshift=0pt]
  (6,0) -- (-6,0)  node [thick, black,midway,xshift=0pt,yshift=0pt] {};      
 \node at (0.5, -1.4) {$d(x_{b}, y_{b}) \geq 4 c \kappa (y_{b})$};
  \node at (0, 2) {$[x, y]$};

\draw[dashed, thick, black] (-6, 0) to(-4, 2.1){};
\draw[dashed, thick, black] (6, 0) to(4, 2.1){};
\draw[thick] (-8,8) to (-4.5, 3){};
\draw[thick] (8,8) to (4.5, 3){};
\draw[thick] (-4.5, 3) to [bend right=59] (4.5, 3){};

%
\end{tikzpicture}
\caption{$d(x_{b}, [x, y]) \leq 5 c \kappa (x_{b}), d(y_{b}, [x, y]) \leq 5 c \kappa (x_{b}).$}
\end{figure}
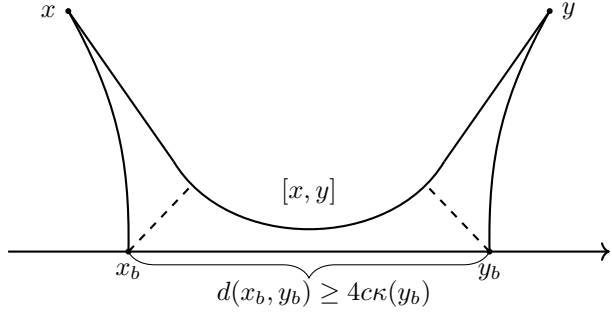

\begin{proof}

  The first part follows from Lemma \ref{lemma: travelling close to geodesics} since $\kappa$ is a non-increasing function. Let $x', y'$ be as in the previous problem. This gives that

     \begin{align*}
        d(x,y) &= d(x,x')+d(x',y')+d(y',y) \\
               &= d(x, x_b)+d(x',y')+d(y, y_b) \\
               & \leq d(x, x_b)+d(x_b, y_b)+d(y, y_b).
      \end{align*}

     Thus, $d(x',y') \leq d(x_b, y_b).$ Now, applying part (1)  to $x',y'$, we get that
     
     \begin{align*}
     d(x_b, y_b) &\geq d(x',y') \\
                 &\geq d(x', x'_b)+d(x'_b, y'_b)+d(y', y'_b)-2\cc\kappa(y'_b) \\
                 &\geq d(x', x'_b)+d(x_b, y_b)-2\cc\kappa(y_b)+d(y', y'_b)-2\cc\kappa(y'_b)\\
                 &\geq d(x', x'_b)+d(x_b, y_b)-2\cc\kappa(y_b)+d(y', y'_b)-2\cc\kappa(y_b) \\
                 &\geq d(x', x'_b)+d(x_b, y_b)+d(y', y'_b)-4\cc\kappa(y_b).
       \end{align*}

     This yields that $d(x', x'_b)+d(y', y'_b) \leq 4\cc\kappa(y_b)$. In particular,
     
     \begin{center}
         
    max$\{d(x', x'_b), d(y',y'_b)\} \leq 4\cc \kappa(y_b).$
     \end{center}
    Using convexity of the CAT(0) metric, we get that $\pi_{b}([x',y']) \subseteq N_{4\cc\kappa(y_b)}([x,y])$, but since every point in $[x_b, y_b]$ is within $\cc \kappa(y_b)$ of $[x'_b, y'_b]$, the result follows. 
   
    \end{proof}

The proof of Theorem \ref{thm: main theorem section 4} is presented in two parts.

\subsubsection*{The forward direction of Theorem \ref{thm: main theorem section 4}: $\kappa$-contracting implies $\kappa$-excursion}

Let $b$ be a $\kappa$-contracting geodesic ray with constant $\cc$ and let $\dd$ be a constant as in Lemma \ref{lem: insert a hyperplane in the middle}. Since $\kappa$ is a sublinear function, there exists an $s_0 \geq 0$ such that if $s \geq s_0,$ we have $\kappa(s) \leq \frac{s}{2\dd}$. Choose $s_1 \geq s_0$ and divide the geodesic ray into pieces $0<s_1<s'_1<s_2<s'_2<...<s_i<s'_i<..$ such that for each $i$, we have

\begin{align*}
    s'_{i}-s_i  &= \dd \kappa(s'_{i}), \\
    s_{i+1}-s'_i&= 4\cc(1+2\dd) \kappa(s_{i+1}).
\end{align*}

Therefore,  $s'_{i} =s_i+\dd \kappa(s'_{i}) \leq s_i+\dd\frac{s'_{i}}{2\dd}$ and hence $\frac{s'_{i}}{2} \leq s_i$ or $s'_{i} \leq 2 s_i.$

Lemma \ref{lem: insert a hyperplane in the middle} assures that each $[b(s_i),b(s_i')]$ crosses a hyperplane $h_i$ whose projection lies entirely inside $[b(s_i),b(s_i')]$. Define $b(t_{i}):=h_{i} \cap [b(s_i),b(s_i')]$. Since

\begin{align*}
    d(b(t_i), b(t_{i+1})) &\leq d(s_i, s'_i)+d(s'_i,s_{i+1})+d(s_{i+1}, s'_{i+1}) \\
    &\leq \dd \kappa(s'_i)+4\cc(1+2\dd)\kappa(s_{i+1})+\dd \kappa(s'_{i+1})\\
    &\leq \dd \kappa(s'_i)+4\cc(1+2\dd)\kappa(s_{i+1})+2\dd \kappa(s_{i+1}) \\
    &\leq \dd \kappa(t_{i+1})+4\cc(1+2\dd)\kappa(t_{i+1})+2\dd \kappa(t_{i+1})\\
    &= (3\dd +4\cc(1+2\dd))\kappa(t_{i+1}).
\end{align*}

Let $x,y$ be in $\in h_i, h_{i+1}$ respectively. Notice that as  $y_b \in [b(s_{i+1}),b(s'_{i+1}) ]$, we have $$||y_b|| \leq s'_{i+1} \leq 2s_{i+1}<(1+2\dd) s_{i+1}.$$ Hence, we have

\begin{center}
    
$\kappa(||y_b||) \leq \kappa((1+2\dd) s_{i+1} ) \leq (1+2\dd)\kappa(s_{i+1}) \leq (1+2\dd)\kappa(t_{i+1}).$

\end{center}

Notice that diam$ (\pi_b ([x,y]) \geq  4\cc(1+2\dd) \kappa(s_{i+1}) \geq 4\cc  \kappa(||y_b||)=4\cc \kappa(||y_b||).$ Using Lemma \ref{lemma: large projection means close}, any geodesic $[x, y]$ connecting $h_i$ to $h_{i+1}$ has to come $5\cc \kappa(y_b)$ close to every point in $[t_i,t_{i+1}]$. In particular, we have $[x,y] \cap B_{5\cc \kappa (||y_b||)}(t_{i+1}) \neq \emptyset$. Hence, using the equation above, we have

\begin{center}

$[x,y] \cap B_{5\cc(1+2\dd) \kappa(t_{i+1})}(t_{i+1}) \neq \emptyset.$
\end{center}

Let $\cc'=5\cc(1+2\dd)$. The previous equation then implies that every hyperplane $h$ intersecting both $h_i$ and $h_{i+1}$ meets $B_{\cc' \kappa(t_{i+1})}(t_{i+1})$. Thus, if $\mathcal{C}$ denotes the collection of hyperplanes meeting both $h_i, h_{i+1}$ which contain no facing triple, then $|\mathcal{C}|< \infty.$ Since the CAT(0) metric is quasi-isometric to the $d^{(1)}$-metric, there exists a constant $\cc_1=\cc_1(v, \cc')$ such that the ball $B_{\cc' \kappa(t_{i+1})}(t_{i+1})$ is contained in a combinatorial ball $B^{(1)}$ of radius $\cc_1 \kappa(t_{i+1})$.

Using Remark \ref{remark: convex hull}, the diameter of the convex hull $Z$ of $B^{(1)}$ is at most $2v\cc_1 \kappa(t_{i+1}),$ where $v$ bounds the dimension of the cube complex.

 By Lemma \ref{lem:definite proportion}, there exist two points  $x',y' \in X^{(1)}$ and $\frac{1}{k}|\mathcal{C}|$ hyperplanes in $\mathcal{C}$ which separate $x',y'$, where $k$ is a uniform constant depending only on the dimension $v$. Denote this collection by $\mathcal{C}'$. Since $Z$ has diameter at most $2v\cc_1 \kappa(t_{i+1})$, we have $d(\sg_{Z}(x'), \sg_{Z}(y')) \leq 2v\cc_1 \kappa(t_{i+1})$. Also, using Proposition \ref{key: hyperplanes intersecting a convex subcomplex}, any hyperplane $h \in \mathcal{C}'$ crossing $Z$ separates $\sg_{Z}(x'), \sg_{Z}(y')$. Thus, there can be at most $diam(Z) \leq 2v\cc_1 \kappa(t_{i+1})$ such hyperplanes. Therefore, $2v\cc_1 \kappa(t_{i+1})$ bounds the number of hyperplanes $h \in \mathcal{C}'$ which in turn implies that  $2kv\cc_1 \kappa(t_{i+1})$ bonds in the number of hyperplanes in $\mathcal{C}.$

\subsubsection{The backward direction of Theorem~\ref{thm: main theorem section 4}: $\kappa$-excursion implies $\kappa$-contracting.}

Let $b$ be the $\kappa$--excursion geodesic under consideration. Let $[x,y]$ be a geodesic connecting $x \in X$ to  $y \in b $. The following lemma states that if $[x,y]$ meets one of the separated hyperplanes crossed by $b$, it meets it at distance at most $\kappa$ from $b$. The following argument is inspired by the similar argument in \cite{Merlin}.

\begin{lemma} \label{lem: Main lemma to show backward direction}
Let $b$ be a $\kappa$-excursion geodesic and let $\{h_i\}$ be the sequence of well-separated hyperplanes crossed by $b$. There exists a constant $\dd$, depending only on $v$ and the excursion constant $\cc$, such that the following holds:

\begin{enumerate}
    \item If $x \in h_i$ and $y \in b$, with $y>b(t_{i+2})$, then the geodesic $[x,y]$ intersects $h_{i+1}$ at distance at most $\dd \kappa(t_{i+1})$ from $b(t_{i+1})$.
    
    \item If $x \in h_i$ and $y \in b$, with $y<b(t_{i-2})$, then the geodesic $[x,y]$ intersects $h_{i-1}$ at distance at most $\dd \kappa(t_{i-1})$ from $b(t_{i-1})$.

\end{enumerate}
\end{lemma}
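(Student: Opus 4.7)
The two parts are symmetric; I outline the proof of (1). First, since $y=b(s)$ for some $s>t_{i+2}$ and $x \in h_i$, the hyperplane $h_{i+1}$ separates $x$ from $y$, so the CAT(0) geodesic $[x,y]$ crosses $h_{i+1}$ at a unique point $p$. Write $c:=b(t_{i+1})$, $a:=b(t_i)$, $e:=b(t_{i+2})$. By the bi-Lipschitz equivalence between the CAT(0) and $\ell^{1}$ metrics (Lemma~\ref{quasi-isometry between combinatorial and CAT(0) metrics}), bounding $d(p,c)$ reduces, up to an additive constant depending only on the dimension $v$, to bounding the cardinality of $\mathcal{H}^{*}:=\{H : H \text{ separates } c \text{ from } p\}$. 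Every $H\in\mathcal{H}^{*}$ crosses $h_{i+1}$ because $c,p\in h_{i+1}$. I orient each $H$ so that $c\in H^{-}$ and $p\in H^{+}$, and partition $\mathcal{H}^{*}$ by the location of $\{x,y\}$.

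The subcase $x,y\in H^{-}$ is immediately impossible: the halfspace $H^{-}$ is CAT(0)-convex, so $[x,y]\subset H^{-}$, contradicting $p\in [x,y]\cap H^{+}$. When $H$ separates $x$ and $y$, Proposition~\ref{key: hyperplanes intersecting a convex subcomplex} applied to the convex subcomplex $h_{i+1}$ identifies the count of such $H$ with $d^{(1)}(\sg_{h_{i+1}}(x),\sg_{h_{i+1}}(y))$. I plan to bound this by the triangle inequality through $c$. For the first piece, comparing $\sg_{h_{i+1}}(x)$ with $\sg_{h_{i+1}}(a)$: since $x,a\in h_i$, Proposition~\ref{key: hyperplanes intersecting a convex subcomplex} shows the relevant hyperplanes cross both $h_i$ and $h_{i+1}$ and contain no facing triple (they separate two vertices), so well-separation of $(h_i,h_{i+1})$ gives a bound of $\cc\kappa(t_{i+1})$; adding the 1-Lipschitz bound $d^{(1)}(\sg_{h_{i+1}}(a),c)\leq d^{(1)}(a,c)$ and the excursion condition gives $d^{(1)}(\sg_{h_{i+1}}(x),c)=O(\kappa(t_{i+1}))$. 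For $\sg_{h_{i+1}}(y)$, since $h_{i+2}$ separates $y$ from $h_{i+1}$, gate composition yields $\sg_{h_{i+1}}(y)=\sg_{h_{i+1}}(\sg_{h_{i+2}}(y))$ with $\sg_{h_{i+2}}(y)\in h_{i+2}$; the analogous argument with the pair $(h_{i+1},h_{i+2})$ and concavity of $\kappa$ (to compare $\kappa(t_{i+2})$ with $\kappa(t_{i+1})$) yields the matching bound.

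The remaining subcase is $x,y\in H^{+}$ (so $H$ does not separate $x,y$). If $H$ crosses $h_i$ or $h_{i+2}$, the corresponding well-separation hypothesis bounds the count by $O(\kappa(t_{i+1}))$. Otherwise, $H$ is disjoint from both; since $x\in h_i\cap H^{+}$ and $h_i$ is connected, $h_i\subset H^{+}$, so $a\in H^{+}$. If one also had $h_{i+2}\subset H^{+}$, then $a,e\in H^{+}$ while $c\in H^{-}$, which would force the CAT(0) geodesic $b$ to cross the hyperplane $H$ twice (once in $[t_i,t_{i+1}]$ and once in $[t_{i+1},t_{i+2}]$); this is impossible because halfspaces are CAT(0)-convex, so $b$ can cross each hyperplane at most once. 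Hence $h_{i+2}\subset H^{-}$, and $H$ separates $h_i$ from $h_{i+2}$, placing it among the hyperplanes crossed by $b$ in $[t_i,t_{i+2}]$, whose count is at most $d^{(1)}(a,e)=O(\kappa(t_{i+1}))$ by the excursion condition.

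Summing all contributions gives $|\mathcal{H}^{*}|\leq \dd\kappa(t_{i+1})$ for a constant $\dd$ depending only on $\cc$ and $v$, which, via Lemma~\ref{quasi-isometry between combinatorial and CAT(0) metrics}, translates to $d(p,c)\leq \dd\kappa(t_{i+1})$. I expect the delicate point to be the case $x,y\in H^{+}$ with $H$ disjoint from $h_i$ and $h_{i+2}$: the well-separation hypotheses cover transverse configurations, but here one must invoke the ``at-most-one crossing'' property of CAT(0) geodesics to rule out a zig-zag of $b$ across $H$ and thereby reduce to the hyperplanes actually crossed by $b$ between $t_i$ and $t_{i+2}$.
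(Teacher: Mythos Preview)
Your argument is correct and arrives at the same bound, but it follows a genuinely different decomposition from the paper's.  The paper introduces the second crossing point $x_2=[x,y]\cap h_{i+2}$ and runs a ``closed-loop'' argument twice: first on the quadrilateral with vertices $x_1,b(t_{i+1}),b(t_{i+2}),x_2$, partitioning the hyperplanes that meet $[x_1,b(t_{i+1})]\subset h_{i+1}$ according to which of the other three sides they exit through (bounded respectively by the excursion distance $d(b(t_{i+1}),b(t_{i+2}))$, the well-separation of $h_{i+1},h_{i+2}$, and a residual family $\mathcal{C}_3$); then $\mathcal{C}_3$ is handled by a second loop $x,x_1,b(t_{i+1}),b(t_i)$ using the pair $h_i,h_{i+1}$.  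You never introduce $x_2$; instead you orient each $H\in\mathcal{H}^*$ and split according to where $\{x,y\}$ sits, treating the ``$H$ separates $x,y$'' case via gates and Proposition~\ref{key: hyperplanes intersecting a convex subcomplex}, and the ``$x,y\in H^+$'' case by halfspace nesting.  The paper's loop argument is more self-contained (it never needs gate maps), while your halfspace bookkeeping makes the role of each well-separation hypothesis more transparent.  Two small remarks.  First, the gate-composition identity $\sg_{h_{i+1}}(y)=\sg_{h_{i+1}}(\sg_{h_{i+2}}(y))$ is correct but deserves a line of justification: since $h_{i+2}$ separates $y$ from $h_{i+1}$, any hyperplane separating $y$ from $h_{i+2}$ also separates $y$ from $h_{i+1}$, hence no hyperplane crossing $h_{i+1}$ separates $y$ from $\sg_{h_{i+2}}(y)$, which forces equal gates; you should also note that $a,c,e$ need not be vertices, so the combinatorial gates apply only after passing to nearby $0$-cells at bounded cost in $v$.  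Second, your residual sub-case ($x,y\in H^+$ with $H$ disjoint from both $h_i$ and $h_{i+2}$) is actually empty: after you rule out $h_{i+2}\subset H^+$ by the double-crossing argument, the alternative $h_{i+2}\subset H^-$ forces $h_{i+2}^+\subset H^-$ by nesting and hence $y\in H^-$, contradicting $y\in H^+$.  So the bound you write for that sub-case is valid but vacuous.
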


\begin{figure}[H]
\begin{tikzpicture}[scale=0.3]
\tikzstyle{vertex} =[circle,draw,fill=black,thick, inner sep=0pt,minimum size=.5 mm]
 
[thick, 
    scale=1,
    vertex/.style={circle,draw,fill=black,thick,
                   inner sep=0pt,minimum size= .5 mm},
                  
      trans/.style={thick,->, shorten >=6pt,shorten <=6pt,>=stealth},
   ]

%
%
%

%
%

\begin{scope}
        \myGlobalTransformation{-100}{75};
        \draw [black!50, step=2cm] grid (6, 12);
        \draw [thick, draw=black, fill=yellow, fill opacity=0.5]
       (0,0) -- (0,12) -- (6,12) --(6,0)--cycle;
\end{scope}

\begin{scope}
        \myGlobalTransformation{0}{75};
         \draw [black!50, step=2cm] grid (6, 12);
        \draw [thick, draw=black, fill=yellow, fill opacity=0.5]
       (0,0) -- (0,12) -- (6,12) --(6,0)--cycle;
\end{scope}

\begin{scope}
        \myGlobalTransformation{100}{75};
        \draw [black!50, step=2cm] grid (6, 12);
        \draw [thick, draw=black, fill=yellow, fill opacity=0.5]
       (0,0) -- (0,12) -- (6,12) --(6,0)--cycle;
\end{scope}

\draw[->, black] (-7, 10) to (15, 10){};
\draw [-] (-5, 18) to [bend right=15] (10, 10);
\node [vertex] at (10, 10)[label=above right:$y$]{};
\node [vertex] at (-2.7, 16)[label=above right:$x$]{};
\node [vertex] at (0.9, 13.45)[label=above right:$x_{1}$]{};
\node [vertex] at (4, 11.9)[label=above right:$x_{2}$]{};

\draw [-, thick] (-5, 12) to  (-1, 12);
\node [vertex] at (-5, 12)[label=left:$C'_{1}$]{};
\draw [-, thick] (-1, 12) to  [bend right=65] (-1.8, 11);

\draw [-, thick] (-1.8, 11) to (-1.5, 6);
\node [vertex] at (-1.5, 6)[label=below:$C'_{2}$]{};

\draw [-, thick] (-1, 12) to [bend right=45] (3, 16);
\node at (3, 16)[label=above:$C_{2}$]{};
\draw [-, thick] (-1, 11.2) to  (5, 10.3);
\node at (5, 10.3)[label=below right:$C_{2}$]{};
\draw [-, thick] (-1, 10.5) to [bend left=45] (3, 6);
\node at (3, 6)[label=below:$C_{1}$]{};

%
%
%
%
%
%
%
%
\node at (-2, 1)[label=left:$h_{i}$]{};
\node at (3, 1)[label=left:$h_{i+1}$]{};
\node at (6.5, 1)[label=left:$h_{i+2}$]{};
\end{tikzpicture}
\end{figure}

%
%
%
%
%

\begin{proof}

We give a detailed proof to (1), the proof for (2) is identical.  Let $\cc$ be the excursion constant of $b$ and let $\cc_1$ be a constant as in Lemma \ref{quasi-isometry between combinatorial and CAT(0) metrics}. Define $x_1=[x,y] \cap h_{i+1}$, $x_2=[x,y] \cap h_{i+2}$ and let $\mathcal{C}$ denote the collection of hyperplanes intersecting $[x_1,b(t_{i+1})]$. The collection $\mathcal{C}$ contains no facing triple since every hyperplane in $\mathcal{C}$ is crossed by a geodesic. The collection $\mathcal{C}$ of hyperplanes intersects $[b(t_{i+1}), b(t_{i+2})]$,  $[x_2,b(t_{i+2})]$ or $[x_1,x_2]$. Denote such hyperplanes by $\mathcal{C}_1, \mathcal{C}_2$ and $\mathcal{C}_3$ respectively.

By Lemma \ref{quasi-isometry between combinatorial and CAT(0) metrics}, the number of hyperplanes in $\mathcal{C}$ intersecting both $[x_1,b(t_{i+1})]$ and $[b(t_{i+1}), b(t_{i+2})]$ is bounded above by $\cc_1 \cc \kappa(t_{i+2}).$

Since $h_{i+1}, h_{i+2}$ are $\cc\kappa(t_{i+2})$-well-separated, there is at most $\cc \kappa(t_{i+2})$ hyperplanes in $\mathcal{C}$ that intersect $[x_2,b(t_{i+2})]$. Therefore, we have:

\begin{align*}
    |\mathcal{C}| & \leq |\mathcal{C}_1| +|\mathcal{C}_2| +|\mathcal{C}_3|\\
                  & \leq \cc_1 \cc \kappa(t_{i+2})+ \cc \kappa(t_{i+2})+|\mathcal{C}_3| \\
                  &   = (\cc_1\cc+\cc)\kappa(t_{i+2})+|\mathcal{C}_3|.\\
\end{align*}

Every hyperplane in $\mathcal{C}_3$ intersects $[b(t_i),b(t_{i+1})]$ or $[x,b(t_i)]$. Denote such hyperplanes by $\mathcal{C}'_1, \mathcal{C}'_2$, respectively.

 Using Lemma \ref{quasi-isometry between combinatorial and CAT(0) metrics}, $|\mathcal{C}'_1| \leq \cc_1\cc \kappa(t_{i+1})$. Since $h_i, h_{i+1}$ are $\kappa (t_{i+1})$-well-separated, $|\mathcal{C}'_2| \leq \cc \kappa(t_{i+1}).$ Therefore,
 \begin{center}
     
 $|\mathcal{C}_3| \leq \cc_1\cc \kappa(t_{i+1})+\cc \kappa(t_{i+1})$
 
  \end{center}
  
  This implies the following:
  
  \begin{align*}
      \mathcal{|C|} &\leq (\cc_1\cc+\cc)\kappa(t_{i+2})+|\mathcal{C}_3| \\
                  & \leq (\cc_1\cc+\cc)\kappa(t_{i+2})+ \cc_1\cc \kappa(t_{i+1})+\cc \kappa(t_{i+1})\\
                  & \leq (\cc_1\cc+\cc)\kappa(t_{i+2})+ \cc_1\cc \kappa(t_{i+2})+\cc \kappa(t_{i+2})\\
                  & \leq 2(\cc_1\cc+\cc ) \kappa(t_{i+2}).
  \end{align*}

Since $\mathcal{C}$ was defined to be the collection of hyperplanes separating intersecting the geodesic $[x_1, b(t_{i+1})]$, using Lemma \ref{quasi-isometry between combinatorial and CAT(0) metrics}, we have $d(x_1,b (t_{i+1})) \leq 2\cc_1(\cc_1\cc+\cc ) \kappa(t_{i+2}).$

\end{proof}

From Lemma~\ref{lem: Main lemma to show backward direction} we have the following:
\begin{corollary}\label{cor-distancebound}
Let $b$ be a $\kappa$-excursion geodesic and let $\{h_i\}$ be the sequence of well-separated hyperplanes crossed by $b$. There exists a constant $\dd$, depending only on $v$ and the excursion constant $\cc$, such that the following holds:

\begin{enumerate}
    \item For any $x$ between $h_{i-1}$ and $h_{i}$, if $y \in b$ satisfies $y>b(t_{i+2})$, then the geodesic $[x,y]$ intersects $h_{i+1}$ at distance at most $\dd \kappa(t_{i+1})$ from $b(t_{i+1})$.
    
    \item For any $x$ between $h_i$ and $h_{i+1}$, if $y \in b$ satisfies $y<b(t_{i-2})$, then the geodesic $[x,y]$ intersects $h_{i-1}$ at distance at most $\dd \kappa(t_{i-1})$ from $b(t_{i-1})$.

    \item If $x$ lies between $h_i$ and $h_{i+1}$, then $x_b \in B_{\dd \kappa(t_{i})}(b(t_{i}))$.

\end{enumerate}
\end{corollary}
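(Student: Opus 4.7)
\textbf{Plan for the proof of Corollary \ref{cor-distancebound}.} The plan is to derive (1) and (2) as immediate extensions of Lemma \ref{lem: Main lemma to show backward direction} by truncating $[x,y]$ at its first crossing with $h_i$, and then to extract (3) by combining (1) with the defining property of the nearest-point projection in a CAT(0) space.

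For (1), observe that since $x$ lies between $h_{i-1}$ and $h_i$ while $y$ lies past $b(t_{i+2})$, the geodesic $[x,y]$ must cross $h_i$. Let $x' := [x,y] \cap h_i$; this intersection is a single point because a CAT(0) geodesic in a CAT(0) cube complex meets each hyperplane (a convex subset) at most once. Applying Lemma \ref{lem: Main lemma to show backward direction}(1) to the pair $(x',y)$ yields the bound $d([x',y] \cap h_{i+1},\,b(t_{i+1})) \leq \dd\kappa(t_{i+1})$, and since $[x',y]$ is a sub-arc of $[x,y]$ the two geodesics share the same crossing with $h_{i+1}$. Part (2) follows by the mirror argument using Lemma \ref{lem: Main lemma to show backward direction}(2) in place of (1).

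Part (3) is the substantive step. Write $x_b = b(s)$. Throughout, the excursion hypothesis $t_{j+1}-t_j \leq \cc\kappa(t_{j+1})$ combined with Proposition \ref{lemma: constants} makes $\kappa(t_{i-3}),\dots,\kappa(t_{i+3})$ pairwise multiplicatively comparable, and bounds $t_{i+3}-t_{i-3}$ by a fixed multiple of $\kappa(t_i)$. Assume $s \geq t_i$ (the opposite case is symmetric and uses part (2)). If $s \leq t_{i+3}$, the comparability already yields $|s-t_i| \leq t_{i+3}-t_i \leq \dd'\kappa(t_i)$, and we are done. Otherwise $s > t_{i+3}$; since $x$ lies between $h_i$ and $h_{i+1}$, part (1) applied with indices shifted by one and with $y = x_b$ produces a point $z = [x,x_b] \cap h_{i+2}$ satisfying $d(z,b(t_{i+2})) \leq \dd\kappa(t_{i+2})$.

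The endgame is a standard projection comparison: since $x_b$ minimises the distance from $x$ to $b$, we have $d(x,x_b) \leq d(x,b(t_{i+2})) \leq d(x,z)+\dd\kappa(t_{i+2})$, while $z \in [x,x_b]$ forces $d(x,x_b) = d(x,z)+d(z,x_b)$. Subtracting gives $d(z,x_b) \leq \dd\kappa(t_{i+2})$, hence $d(x_b,b(t_{i+2})) \leq 2\dd\kappa(t_{i+2})$; adding $d(b(t_{i+2}),b(t_i)) \leq t_{i+2}-t_i$ and invoking comparability of the $\kappa(t_j)$ finishes the argument. The main obstacle I anticipate is purely bookkeeping the constants spawned by the excursion condition and by Proposition \ref{lemma: constants}; no new geometric ingredient beyond Lemma \ref{lem: Main lemma to show backward direction} and the nearest-point projection property in CAT(0) is required.
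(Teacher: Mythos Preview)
Your proof is correct and follows essentially the same route as the paper: parts (1) and (2) are obtained by truncating $[x,y]$ at its crossing with $h_i$ and invoking Lemma~\ref{lem: Main lemma to show backward direction}, and part (3) is reduced to (1)--(2) applied to the segment $[x,x_b]$. You in fact give more detail than the paper does---the paper's proof of (3) simply asserts that applying (1) and (2) to $[x,x_b]$ ``yields the desired result,'' whereas you spell out the nearest-point projection comparison needed to convert the closeness of $z$ to $b(t_{i+2})$ into a bound on $d(x_b,b(t_i))$.
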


\begin{proof}
For (1), define $x':= [x,y] \cap h_i$. Applying Lemma \ref{lem: Main lemma to show backward direction} to the geodesic $[x',y]$ gives the desired claim. The proof of (2) is the exact same. For (3), if $x_b \in [b(t_{i-2}), b(t_{i+2})]$, then we are done since $b$ is a $\kappa$-excursion geodesic. Otherwise if $x_b \notin [b(t_{i-2}), b(t_{i+2})]$, then applying $(1)$ and $(2)$ to the geodesic $[x,x_b]$ yields the desired result.

\end{proof}

\subsubsection*{Proof of the backward direction of Theorem~\ref{thm: main theorem section 4}}
Let $x \in X$ and let $y \in b$, we will show that $b$ is $\kappa$-slim with respect to condition 1. In other words, we need to show there exists a constant $\cc$ such that $d(x_b, [x,y]) \leq \cc \kappa (x_b),$ which follows from Corollary~\ref{cor-distancebound}. Therefore, every $\kappa$-excursion geodesic is $\kappa$-slim and is hence $\kappa$-contracting.

\subsection{Consequence for right-angled Artin groups}

 Recall that two hyperplanes $h_1, h_2$ are \emph{ strongly separated} if they are disjoint and no hyperplane intersects both $h_1$ and $h_2$. By \cite{Genevois2020}, when the space in question is the Salvetti complex of a right-angled Artin group, two hyperplanes are strongly separated if and only if they are $k$-well-separated for any $k$. Based on this fact we have the following:

\begin{corollary} \label{hyperplane crossings in RAAG}

Let $X$ be the Salvetti complex complex of a right-angled Artin group, then a geodesic ray $b$ in $X$ is $\kappa$-contracting if and only if there exists constants $\cc$ and an infinite sequence of strongly separated disjointed hyperplanes that $b$ crosses, labelled $h_{1}, h_{2}, ...$ at points $b(t_i)$ , such that $d(b (t_i), b (t_{i+1}))=d(t_i, t_{i+1})< \cc \kappa(t_{i+1})$.

\end{corollary}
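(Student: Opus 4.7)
The plan is to derive Corollary \ref{hyperplane crossings in RAAG} directly from the main characterization Theorem \ref{thm: main theorem section 4}, combined with the structural fact from \cite{Genevois2020} that in the Salvetti complex of a right-angled Artin group, two disjoint hyperplanes are strongly separated if and only if they are $k$-well-separated for some (equivalently, for every) $k$. No further combinatorial work should be needed.

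For the forward direction, I would assume $b$ is $\kappa$-contracting. Applying Theorem \ref{thm: main theorem section 4} produces a constant $\cc>0$ and an infinite sequence of hyperplanes $h_1,h_2,\ldots$ crossed by $b$ at points $b(t_i)$ satisfying both $d(t_i,t_{i+1})\leq \cc\kappa(t_{i+1})$ and the property that $h_i,h_{i+1}$ are $\cc\kappa(t_{i+1})$-well-separated. For each $i$, the quantity $\cc\kappa(t_{i+1})$ is a finite constant, so $h_i,h_{i+1}$ are $k$-well-separated for some $k$. Invoking the RAAG-specific equivalence from \cite{Genevois2020}, each consecutive pair $h_i,h_{i+1}$ is then strongly separated, giving exactly the desired sequence.

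For the backward direction, I would begin with an infinite sequence of pairwise disjoint, strongly separated hyperplanes $h_1,h_2,\ldots$ crossed by $b$ at $b(t_i)$ with $d(t_i,t_{i+1})<\cc\kappa(t_{i+1})$. By definition of strong separation, no hyperplane crosses both $h_i$ and $h_{i+1}$, so the set of hyperplanes crossing both (with or without facing triples) is empty; in particular $h_i,h_{i+1}$ are vacuously $\cc\kappa(t_{i+1})$-well-separated. Thus both hypotheses of Theorem \ref{thm: main theorem section 4} are satisfied with the same constant $\cc$, and the theorem yields that $b$ is $\kappa$-contracting.

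The only real subtlety is applying the cited Genevois equivalence correctly: it is crucial that in a Salvetti complex, finite well-separation upgrades to strong separation (which fails in general CAT(0) cube complexes, as Example \ref{example: counter example} illustrates). Once this input is taken as given, the corollary is essentially a direct translation of Theorem \ref{thm: main theorem section 4} into the RAAG vocabulary, with no further geometric estimates required.
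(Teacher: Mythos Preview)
Your proposal is correct and follows exactly the same approach as the paper: the authors simply note that in a Salvetti complex two hyperplanes are well-separated if and only if they are strongly separated (citing \cite{Genevois2020}), and then invoke Theorem~\ref{thm: main theorem section 4}. Your write-up is in fact more detailed than the paper's one-sentence proof, but the logic is identical.
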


\begin{proof}
In a Salvetti complex, two hyperplanes are well-separated if and only if they are strongly separated, thus the corollary follows from Theorem \ref{thm: main theorem section 4}.
\end{proof}

\subsection{Progress in contact graphs associated with Salvetti complexes.}
In \cite{Hagen2013}, Hagen introduced the contact graph of a CAT(0) cube complex. It is defined as follows. Recall that a \emph{hyperplane carrier}  is the union of all (closed) cubes intersecting a hyperplane $h$, which we denote $N(h)$.

\begin{definition}[Contact graph] Let $X$ be a CAT(0) cube complex, the contact graph of $X$, denoted by $\mathcal X$, is the intersection
graph of the hyperplane carriers. In other words, it is a graph where vertices are hyperplane carriers in $X$, and two vertices are connected by an edge if the corresponding hyperplanes carriers intersect. In this case we can also say the two hyperplanes have \emph{contact relation}.

\end{definition}

For a CAT(0) cube complex $X$, there is a coarse map $p: X \rightarrow \mathcal{C}X$: Given $x \in X^{(0)}$, the set of hyperplanes $h$ with $x \in N (h)$ corresponds to a complete subgraph
of $\mathcal{C}X$, which we denote $p(x)$. Hence $p : X^{(0)} \rightarrow
\mathcal{C}X$ is a coarse map. For each edge $e$ of $X$, we
define $p(e)$ to be the vertex corresponding to the hyperplane dual to $e$. Hence we have a coarse
map $p : X^{(1)} \rightarrow
\mathcal{C}X$. More generally, if $c$ is an open $n$--cube, $n \geq 1$, let $p(c)$ be the complete
subgraph with vertex set the hyperplanes intersecting $c$.

As an application to Theorem~\ref{thm: main theorem section 4}, we will prove the Corollary~\ref{cor:final statement of the intro} from the introduction. But first we need a lemma:

\begin{lemma}\label{lem:projection length}
Let $b$ be a geodesic ray in a finite dimensional $\CAT$ cube complex and let $p:X \rightarrow \mathcal{C}X$ be the projection defined in the previous paragraph. If $b$ intersects a sequence of strongly separated hyperplanes, $h_1, h_2, ..., h_n$ then the diameter of the projection of $b$ in the contact graph $\mathcal{C}X$ is at least $n-1$.
\end{lemma}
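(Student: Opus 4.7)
The plan is to reduce the lemma to a lower bound on the combinatorial distance $d_{\mathcal{C}X}(h_1,h_n)$ inside the contact graph. Since $b$ crosses each $h_i$ at a point $b(t_i)\in h_i$, which lies in some cube $c$ that $h_i$ passes through, we have $h_i\in p(c)\subset p(b)$ for every $i$. Consequently $\mathrm{diam}\,p(b)\geq d_{\mathcal{C}X}(h_1,h_n)$, and it suffices to prove $d_{\mathcal{C}X}(h_1,h_n)\geq n-1$.

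To do so I would fix an arbitrary path $h_1=k_0,k_1,\ldots,k_m=h_n$ in $\mathcal{C}X$ and show $m\geq n-1$. First I would record a promoting observation: strong separation of consecutive pairs forces strong separation of every pair $h_i,h_{i'}$ with $i<i'$, because any hyperplane crossing both would have to cross each intermediate $h_j$ (which separates $h_i$ from $h_{i'}$ in $X$), and in particular a consecutive pair, contradicting the hypothesis. In particular no single hyperplane crosses more than one of the $h_i$, and all the $h_i$ are pairwise disjoint.

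The key step is, for each $i$ with $2\leq i\leq n-1$, to locate an index $j(i)\in\{1,\ldots,m-1\}$ for which $k_{j(i)}$ either equals $h_i$ or crosses $h_i$. The hyperplane $h_i$ divides $X$ into two open half-spaces, with $k_0=h_1$ in one and $k_m=h_n$ in the other (visible from the order in which $b$ crosses the $h_j$, combined with pairwise disjointness). The crucial subclaim is that if $k_j$ and $k_{j+1}$ are in contact and both disjoint from $h_i$, then they lie on the same side of $h_i$. In the crossing case a point of $k_j\cap k_{j+1}$ determines the common side. In the osculation case one picks a vertex $v\in N(k_j)\cap N(k_{j+1})$ and dual edges $e_j,e_{j+1}$ at $v$; since $k_j\neq h_i$, the edge $e_j$ is not dual to $h_i$ (edges have unique dual hyperplanes), so $e_j$ does not cross $h_i$, forcing $v$ to lie on the same side as the midpoint of $e_j$, which lies on $k_j$; similarly for $k_{j+1}$. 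Thus, because the path starts and ends on opposite sides of $h_i$, some $k_{j(i)}$ must fail to be disjoint from $h_i$.

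It remains to check that the indices $j(2),\ldots,j(n-1)$ are pairwise distinct and lie in $\{1,\ldots,m-1\}$. Distinctness is an easy case analysis using the promoted strong separation: $k_{j(i)}$ can equal at most one $h_i$ (they are distinct), cannot simultaneously cross two disjoint $h_i,h_{i'}$ (strong separation), and cannot equal one $h_i$ while crossing another (that would force $h_i$ and $h_{i'}$ to meet). And $j(i)\notin\{0,m\}$ because $k_0=h_1$ and $k_m=h_n$ are disjoint from and distinct from every intermediate $h_i$. This produces $n-2$ distinct indices in $\{1,\ldots,m-1\}$, hence $m\geq n-1$. The step I expect to require the most care is the osculation part of the subclaim, where one must use carefully that an edge dual to a hyperplane other than $h_i$ cannot cross $h_i$ and therefore sits, together with both its endpoints, on one side of $h_i$.
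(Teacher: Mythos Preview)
Your proof is correct and follows essentially the same strategy as the paper's: both reduce to showing $d_{\mathcal{C}X}(h_1,h_n)\geq n-1$ by arguing that along any contact-graph path from $h_1$ to $h_n$, each intermediate $h_i$ must be crossed by (or equal) some vertex of the path, and strong separation prevents any single vertex from serving two distinct $h_i$'s. The only difference is packaging: the paper builds a continuous path in $X$ through the successive carriers $N(v_j)$ and uses that each $h_i$ separates its endpoints, hence meets the path, hence shares a cube with some $v_j$ and therefore crosses it; you instead run a half-space argument, checking directly that two contacting hyperplanes both disjoint from $h_i$ lie on the same side of $h_i$ (with the osculation case handled via a common vertex and its dual edges). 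Your combinatorial treatment of osculation is exactly what underlies the paper's one-line observation that ``a hyperplane meets another hyperplane's carrier if and only if both hyperplanes intersect some cube,'' so the two arguments are really the same.
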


\begin{proof}
Reindexing the $h_i$ if necessary, we may assume that the hyperplanes intersect the geodesic $b$ in the same order as their indexing numbers. 

For the sake of contradiction assume that the diameter of the projection of $b$ in $\mathcal{C}X$ is less than $n-1$. In particular, since $h_1$ and $h_n$ are in the projection, we know that the distance between $h_1$ and $h_n$ is less than $n-1$. Let $v_1=h_1, v_2, ... ,v_{k-1}, v_k=h_n$ be a sequence of vertices in $\mathcal{C}X$ which forms an edge path geodesic from $h_1$ to $h_n$. The assumption tells us the edge path is no longer than $n-1$ i.e. that $k-1<n-1$, or in other words that $k < n$. 

Since the sequence of $v_j$ forms a geodesic in $\mathcal{C}X$ we know that for each $j$ the hyperplane carrier of $v_j$ intersects the hyperplane carrier of $v_{j+1}$. Thus, in the space $X$, there is a continuous path from the intersection point $b \cap h_1$ to $b\cap h_n$ which is the concatination of line segments contained inside a hyperplane carrier of $v_j$ for some $j$. Lets call this continuous path $c$. 

Note that $h_1$ and $h_n$ intersect $c$ (at the starting point and ending point respectively). Also notice that the sequence of planes $h_1, ..., h_n$ cannot contain any facing triples since they all intersect the same geodesic (see the discussion after Definition~\ref{def:facing triples}). Thus for each $h_i$ with $1<i<n$, $h_i$ separates $h_1$ from $h_n$ and since $c$ is continuous, this means that $h_i$ must intersect $c$. 

Since for any $i$, $h_i$ intersects $c$ there must be a $j$ where $h_i$ meets the hyperplane carrier of $v_j$. Notice that a hyperplane meets another hyperplane's carrier if and only if both hyperplanes intersect some cube, but all hyperplanes in a given cube must intersect one another, and so for each $1\leq i \leq n$, $h_i$ intersects at least one of the $v_j$. Since $k < n$ there is a $1\leq j \leq k$ and $1\leq i \neq l\leq n$ so that $v_j$ intersects both $h_i$ and $h_l$.  

However since the $h_i$ are all strongly separated no $v_j$ intersects more than one of the $h_i$. This gives us the needed contradiction and so the diameter of the projection of $b$ is at least $n-1$.
\end{proof}

Now we can prove Corollary~\ref{cor:final statement of the intro} from the introduction in the case where $X$ is the Salvetti complex of a right-angled Artin group:

\begin{proposition}

Let $\Gamma$ be a graph, $A_\Gamma, \Tilde{\mathcal{S}_\Gamma}$ be the corresponding right-angled Artin group and  Salvetti complex respectively. Every $\kappa$-contracting geodesic makes a definite progress in the contact graph. More precisely, if $b$ is a $\kappa$-contracting geodesic ray and $p:\Tilde{\mathcal{S}_\Gamma} \rightarrow \mathcal{C}\Tilde{\mathcal{S}_\Gamma}$ is the projection map, then there exists a constant $\dd$ such that for any $t,$ we have $d(p(\go),p(b(t)) \geq \frac{t}{\dd\kappa(t)}-2.$
\end{proposition}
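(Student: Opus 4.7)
The strategy is to combine Corollary~\ref{hyperplane crossings in RAAG} (which provides a sequence of strongly separated hyperplanes crossed by $b$ at controlled times) with Lemma~\ref{lem:projection length} (which converts such a sequence into a lower bound in the contact graph), and then use a covering argument to pass from $d_{\mathcal{C}X}(h_1,h_n)$ to $d_{\mathcal{C}X}(p(\go),p(b(t)))$.

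First, I would apply Corollary~\ref{hyperplane crossings in RAAG} to obtain an infinite sequence of strongly separated hyperplanes $h_1,h_2,\ldots$ crossed by $b$ at $b(t_i)$ with $t_{i+1}-t_i\leq \cc\kappa(t_{i+1})$. For $t$ large, let $n=n(t)$ be the largest index with $t_n\leq t$. Because $\kappa$ is sublinear, eventually $\cc\kappa(t)\leq t/2$; this together with $t_{n+1}-t_n\leq\cc\kappa(t_{n+1})$ and $t_n\leq t<t_{n+1}$ forces $t_{n+1}\leq 2t$, and concavity (see (\ref{Eq:Concave})) then gives $\kappa(t_{i+1})\leq 2\kappa(t)$ for all $i\leq n$. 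Summing $t_{i+1}-t_i\leq 2\cc\kappa(t)$ from $i=1$ to $n-1$ and using $t_n\geq t-2\cc\kappa(t)$ yields a bound of the form $n\geq t/(2\cc\kappa(t))-C_0$ for some constant $C_0=C_0(b)$. Then I apply Lemma~\ref{lem:projection length} to the segment $b[0,t]$, which crosses exactly $h_1,\ldots,h_n$, to conclude $d_{\mathcal{C}X}(h_1,h_n)\geq n-1$.

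The core step is upgrading this to a bound on $d_{\mathcal{C}X}(p(\go),p(b(t)))$. Given any edge-geodesic $u_0,u_1,\ldots,u_k$ in $\mathcal{C}X$ with $u_0\in p(\go)$ and $u_k\in p(b(t))$ that realizes this distance, consider $Y=\bigcup_{j=0}^k N(u_j)\subseteq \Tilde{\mathcal{S}_\Gamma}$. Consecutive $u_j,u_{j+1}$ are adjacent in the contact graph, so $N(u_j)\cap N(u_{j+1})$ contains a cube; hence $Y$ is connected and contains both $\go$ and $b(t)$. Since each $h_i$ with $i\leq n$ separates $\go$ from $b(t)$, connectedness of $Y$ forces $h_i\cap Y\neq\emptyset$, so some cube $C\subseteq N(u_j)$ meets $h_i$; this places $u_j$ and $h_i$ in the clique $p(C)$ and gives $d_{\mathcal{C}X}(u_j,h_i)\leq 1$. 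Thus $\{u_j\}$ is a $1$-net for $\{h_1,\ldots,h_n\}$ in $\mathcal{C}X$, and the triangle inequality yields $k\geq d_{\mathcal{C}X}(h_1,h_n)-2\geq n-3$. Combined with the lower bound on $n$, this produces an inequality of the form $d_{\mathcal{C}X}(p(\go),p(b(t)))\geq t/(\dd\kappa(t))-\dd_2$ with $\dd=2\cc$ and $\dd_2=C_0+3$, which recovers the stated bound (with the additive constant absorbed into $\dd$ if one insists on the specific value $2$).

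The principal obstacle is the covering claim that each separating hyperplane $h_i$ must lie within contact-graph distance $1$ of some $u_j$. This relies on the fact that a hyperplane in a CAT(0) cube complex is a connected separating subspace, so any connected subset of $X$ joining the two sides of $h_i$ must pass through a cube dual to $h_i$; identifying that cube inside some $N(u_j)$ is what places $u_j$ in the clique $p(C)$ together with $h_i$. Once this is in place, the remaining estimates are elementary manipulations using the sublinearity and concavity of $\kappa$.
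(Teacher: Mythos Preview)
Your proposal is correct and follows essentially the same route as the paper: invoke Corollary~\ref{hyperplane crossings in RAAG}, count the hyperplanes $h_1,\dots,h_n$ crossed by time $t$, and use the connected-carrier argument behind Lemma~\ref{lem:projection length} to bound contact-graph distance. Your explicit ``upgrading'' step from $d_{\mathcal{C}X}(h_1,h_n)$ to $d_{\mathcal{C}X}(p(\go),p(b(t)))$ is in fact precisely what the paper glosses over when it cites Lemma~\ref{lem:projection length} directly (the lemma as stated bounds the diameter of the projection, not this particular distance), so your version is if anything more careful here; the paper instead bounds $\kappa(t_i)$ via Proposition~\ref{lemma: constants} where you use concavity directly. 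One small point you should make explicit, as the paper does (citing \cite{Behrstock2011}), is that the $h_i$ are \emph{pairwise} strongly separated, not merely consecutively: Lemma~\ref{lem:projection length} needs this in its pigeonhole step, and it follows because the $h_i$ are crossed in order by the geodesic $b$, so any hyperplane meeting both $h_i$ and $h_j$ with $i<j$ would also meet $h_i$ and $h_{i+1}$.
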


\begin{proof}

The idea of the proof is that among all hyperplanes crossed by a geodesic ray $b$, only strongly separated hyperplanes will contribute to distance in the contact graph. For any given $t>0$, the number of such hyperplanes is asymptotically $\frac{t}{\kappa(t)}$.

Since $b$ is $\kappa$-contracting with constant, using Corollary \ref{hyperplane crossings in RAAG}, there exists $\cc$, and an infinite sequence of  hyperplanes

$$h_1,h_2,\dots, h_i, \dots$$ at points $b(t_i)$ such that $h_i,h_{i+1}$ are strongly separated and \[d(b(t_i), b(t_{i+1})) \leq \cc \kappa(t_{i+1}).\] Since each $h_i, h_{i+1}$ are strongly separated, every $h_i$ is strongly separated from $h_j$ by Theorem 2.5 in \cite{Behrstock2011}. For each $t \in \mathbb{R}$, there exists $i$ such that $b(t)$ is between $b(t_{i-1})$ and $b(t_{i})$. Thus

\begin{align*}
 d(\go, b(t)) &\leq d(\go, b(t_1))+d(b(t_1), b(t_2))+...+d(b(t_{i-1}), b(t_{i}))\\
                                                               & \leq \cc\kappa(t_1)+\cc \kappa(t_2)+...+\cc \kappa(t_{i})\\
                                                               & \leq \cc \kappa(t_{i})+\cc \kappa(t_{i})...+\cc \kappa(t_{i}) \qquad \text{($\kappa$ is monotone nondecreasing)}\\
                                                               &=i\cc \kappa(t_{i}).
\end{align*}  
This implies that $i \geq \frac{d(\go, b(t))}{\cc \kappa(t_{i})}$, where $t_{i-1} \leq t \leq t_i.$ Applying Lemma~\ref{lem:projection length} to the geodesic $b$ and the hyperplanes $h_1,h_2,..,h_i$ we get that $d(p(\go), p(b(t))) \geq i-2$. Using Proposition \ref{lemma: constants} there exists a constant $\dd'$ such that  $\kappa(t_i) \leq \dd' \kappa(t)$.  Using the above, we get that
 \[d(p(\go), p(b(t))) \geq i-2 \geq \frac{d(\go,b(t))}{\cc\kappa(t_i)}-2 = \frac{t}{\cc\kappa(t_i)}-2 \geq \frac{t}{\cc\dd'\kappa(t)}-2.\]

\end{proof}

\subsection{Bounding the well-separating constant in a factored CAT(0) cube complex} \label{subsec: Gen's graph}

Lastly, we prove that if a CAT(0) cube complex is equipped with a \emph{factor system}, then there is a uniform bound to the 
well-separation constant. Factor systems were introduced by \cite{Behrstock2017}. The Salvetti complexes are the main examples of CAT(0) cube complexes with a factor system, which we define now:

\begin{definition}[Parallel subcomplexes]\label{parallel} Let $X$ be a finite dimensional CAT(0) cube complex. Two convex subcomplexes $H_1, H_2$ are said to be \emph{parallel} if for any hyperplane $h,$ we have $h \cap H_1 \neq \emptyset$ if and only if $h \cap H_2 \neq \emptyset$.
\end{definition}

\begin{definition}[Factor systems \cite{Behrstock2017}] \label{def:factorsystem}Let $X$ be a finite dimensional CAT(0) cube complex. A \emph{factor system}, denoted $\mathfrak{F}$, is a collection of subcomplexes 
of $X$ such that:

\begin{enumerate}
    \item $X \in \mathfrak{F}$.
    \item Each $F \in \mathfrak{F}$ is a nonempty convex subcomplex of $X$
    \item There exists $\cc_{1} \geq 1$ such that for all $x \in X^{(0)}$ at most $\cc_{1}$ elements of $\mathfrak{F}$ contains $x.$
    \item Every nontrivial convex subcomplex paralell to a combinatorial hyperplane of $X$ is in $\mathfrak{F}$.
    \item There exists $\cc_{2}$ such that for all $F, F' \in \mathfrak{F}$, either $\sg_{F}(F') \in \mathfrak{F}$ or $$\text{diam}(\sg_{F}(F')) \leq \cc_{2}$$.
\end{enumerate}

\end{definition}

We also use the following generalization of contact graph introduced in \cite{HypInCube}: 

\begin{definition}[The well-separation space] \label{def:Morse-detecting space}
Let $X$ be a finite dimensional CAT(0) cube complex. For each integer $k$, the \emph{$k$-well-separation space}, denoted by $Y_k$ is defined to be the set whose elements are the vertices of $X$, with the following distance function. For $x,y \in X^{(0)}$, the $k$-distance between $x,y$ denoted by $d_k(x,y)$ is defined to be the cardinality of the maximal collection of $k$-well-separated hyperplanes separating $x,y$.

\end{definition}

It was shown in  \cite{HypInCube} that for any CAT(0) cube complex $X$, the $k$-well-separation space $(Y_k,d_k)$ is hyperbolic.

 \begin{proposition}[{\cite[Proposition 6.53]{HypInCube}}]\label{prop: Gen's metric is hyperbolic} Let $X$ be a finite dimensional CAT(0) cube complex and let $Y_k$ be the $k$-well-separation space as in Definition \ref{def:Morse-detecting space}. For any non-negative integer $k$, the metric space $(Y_k,d_k)$ is $9(k+2)$ hyperbolic.
\end{proposition}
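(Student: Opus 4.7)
The plan is to verify the thin-triangles condition for $(Y_k,d_k)$ by analyzing maximal pairwise $k$-well-separated chains of hyperplanes. The starting point is a concrete geodesic model: given $x,y\in X^{(0)}$, a maximal collection $\mathcal{H}_{xy}$ realizing $d_k(x,y)$ admits a canonical linear order, since disjoint hyperplanes separating a common pair of points are nested by the half-space order. Traversing $\mathcal{H}_{xy}$ in this order gives a combinatorial geodesic in $Y_k$, and every point on a $d_k$-geodesic from $x$ to $y$ corresponds to a position in such a chain.

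For a triangle $x,y,z$, I would apply the standard hyperplane trichotomy: each $h\in\mathcal{H}_{xy}$ separates either $x$ from $z$ or $z$ from $y$. This yields a partition $\mathcal{H}_{xy}=\mathcal{H}_{xy}^{\to z}\sqcup\mathcal{H}_{xy}^{z\to}$ in which both pieces inherit pairwise $k$-well-separation. The goal is to show that each piece essentially embeds into a maximal chain realizing $d_k(x,z)$ or $d_k(z,y)$, up to a bounded additive error whose total will give the constant $9(k+2)$.

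The main obstacle is controlling when a hyperplane $h\in\mathcal{H}_{xy}^{\to z}$ fails to extend the chain $\mathcal{H}_{xz}$ while preserving $k$-well-separation. Such a failure forces the existence of some $h'\in\mathcal{H}_{xz}$ together with more than $k$ hyperplanes, containing no facing triple, crossing both $h$ and $h'$. The linear ordering of the chains and the prohibition on facing triples along a common geodesic (see the discussion following Definition~\ref{def:facing triples}) constrain how many distinct blocking $h$'s can arise from a single $h'$; a term-by-term accounting of these defects, summed over both partition pieces and both endpoints of each chain, is where the explicit constant $9(k+2)$ should emerge.

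Finally, translating this chain-level bound back to $(Y_k,d_k)$-distances via the position-in-chain correspondence yields that each side of the triangle lies in the $9(k+2)$-neighborhood of the union of the other two, which is the desired hyperbolicity. The delicate point throughout is that the coarse containment between chains must be controlled by a constant depending only on $k$ (and not on the triangle), so the facing-triple restriction must be exploited uniformly; if this uniform control fails, one would likely need to pass through a Bowditch-style guessing-geodesics argument instead.
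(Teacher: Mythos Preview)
The paper does not prove this proposition; it is quoted from Genevois \cite{HypInCube} and invoked as a black box, so there is no in-paper argument to compare your outline against.

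On the outline itself: encoding $d_k$ via maximal chains of pairwise $k$-well-separated hyperplanes and splitting the chain at the third vertex is the right picture. The gap is the sentence ``traversing $\mathcal{H}_{xy}$ in this order gives a combinatorial geodesic in $Y_k$, and every point on a $d_k$-geodesic from $x$ to $y$ corresponds to a position in such a chain.'' Neither half is automatic. If you place vertices $w_i$ between consecutive hyperplanes of a maximal chain $h_1,\dots,h_n$, you certainly get $d_k(x,w_i)\ge i$ and $d_k(w_i,y)\ge n-i$, but maximality of the chain for the pair $(x,y)$ does not prevent a strictly longer $k$-well-separated chain from separating $x$ and $w_i$: such a chain need not be $k$-well-separated from $h_{i+1}$, so it need not extend to a chain for $(x,y)$, and hence $d_k(x,w_i)$ can exceed $i$. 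Thus the $w_i$ are not shown to form a $d_k$-geodesic, and it is not even clear from your outline that $(Y_k,d_k)$ is a geodesic space in which a thin-triangles argument makes sense. Conversely, an arbitrary $d_k$-geodesic has no a priori reason to track a single maximal chain. You flag the difficulty at the end by mentioning a Bowditch guessing-geodesics argument as a fallback, but it is not a fallback: that criterion (or a direct verification of the four-point inequality, which sidesteps geodesics entirely) is where the actual content lies, and your chain-partition accounting would then serve as the verification of its hypotheses rather than as a stand-alone thin-triangles proof. As written, the argument does not close because the paths you are treating as geodesics have not been shown to be geodesics.
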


\begin{proposition}[{\cite[Proposition 2.2]{HypInCube}}]\label{prop: Projections between subcomplexes}

Let $X$ be a CAT(0) cube complex and let $A,B \subseteq X$ be two convex subcomplexes. The hyperplanes intersecting $\sg_B(A)$ are precisely those which intersect both $A$ and $B.$
\end{proposition}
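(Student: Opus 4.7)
The plan is to deduce this from Proposition~\ref{key: hyperplanes intersecting a convex subcomplex}, the vertex version of the statement, by reducing ``a hyperplane crosses a convex subcomplex'' to ``a hyperplane separates some pair of vertices in that subcomplex.'' First I would record the standard fact that for any convex subcomplex $C$ of $X$, a hyperplane $h$ meets $C$ if and only if $h$ separates at least two vertices of $C$. I would also note that $\sg_B(A)$ is itself a convex subcomplex of $B$, since gate projections of convex subcomplexes are convex; this is what allows the equivalence to be applied to $\sg_B(A)$.

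For the forward implication, suppose $h$ meets $\sg_B(A)$. Then there exist $a_1, a_2 \in A^{(0)}$ so that $h$ separates $\sg_B(a_1)$ from $\sg_B(a_2)$. Both projections lie in $B$, so $h$ crosses $B$. Applying Proposition~\ref{key: hyperplanes intersecting a convex subcomplex} with $Z=B$, $x=a_1$, $y=a_2$, we learn that every hyperplane separating $\sg_B(a_1)$ from $\sg_B(a_2)$ also separates $a_1$ from $a_2$. Hence $h$ separates two vertices of $A$ and therefore crosses $A$.

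For the reverse implication, assume $h$ crosses both $A$ and $B$. Pick vertices $a_1, a_2 \in A^{(0)}$ on opposite sides of $h$. Since $h$ separates $a_1$ from $a_2$ and crosses $B$, Proposition~\ref{key: hyperplanes intersecting a convex subcomplex} applied again with $Z=B$ tells us that $h$ separates $\sg_B(a_1)$ from $\sg_B(a_2)$. These are two vertices of $\sg_B(A)$, so $h$ meets $\sg_B(A)$.

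The argument is then essentially a two-line rewriting on each side, and the only point that requires a little care is the convexity of $\sg_B(A)$ as a subcomplex of $B$, without which the opening equivalence between ``crossing'' and ``separating two vertices'' would not apply on the projection side. I do not expect this to be a serious obstacle, since convexity of gate images in cube complexes is well-known, but it is the one fact outside of Proposition~\ref{key: hyperplanes intersecting a convex subcomplex} that the proof genuinely relies on.
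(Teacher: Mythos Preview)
The paper does not prove this proposition; it is quoted without proof from \cite{HypInCube} (as is the vertex version, Proposition~\ref{key: hyperplanes intersecting a convex subcomplex}, from \cite{Genevois2020}). So there is no in-paper argument to compare against.

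Your derivation from Proposition~\ref{key: hyperplanes intersecting a convex subcomplex} is correct and is essentially how one bootstraps the set-wise statement from the pointwise one. One small point worth making explicit in the forward direction: when you say ``there exist $a_1,a_2\in A^{(0)}$ so that $h$ separates $\sg_B(a_1)$ from $\sg_B(a_2)$,'' you are using not just that $\sg_B(A)$ is convex but that every vertex of $\sg_B(A)$ is actually in the image of $A^{(0)}$ under $\sg_B$. This is true (indeed $\sg_B(A)$ is typically \emph{defined} as the full subcomplex on $\{\sg_B(a):a\in A^{(0)}\}$, and one checks this set is convex), but it is the second standing fact your argument needs, alongside the convexity you already flagged. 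With that made explicit, the proof is complete.
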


%
%
%

The proof of the following lemma is exactly the same as that of Lemma 6.53 in the first arXiv version of \cite{Gen-arXiv}. However as the Genevois noted, it is only suitable under the assumption that $X$ is a cocompact CAT(0) cube complex with a factor system. For completeness, we present it here.

\begin{lemma} \label{lem: well-separated implies L-well-separated}
Let X be a cocompact CAT(0) cube complex with a factor system. There exists a constant $L \geq 0$ such that any two hyperplanes of $X$
 either are $L$-well-separated or are not well-separated.
\end{lemma}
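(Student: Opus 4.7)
The plan is to reduce the problem to analyzing the diameter of a certain gate projection in the factor system. For hyperplanes $h_1, h_2$ in $X$, I would first pass to combinatorial hyperplanes $H_1, H_2$ parallel to $h_1, h_2$; these are elements of $\mathfrak{F}$ by axiom (4) of Definition~\ref{def:factorsystem}. Since parallel subcomplexes are crossed by the same hyperplanes, Proposition~\ref{prop: Projections between subcomplexes} tells me that a hyperplane $h$ of $X$ crosses both $h_1$ and $h_2$ if and only if it crosses $A := \sg_{H_1}(H_2)$. So the problem reduces to uniformly bounding the size of any facing-triple-free collection of hyperplanes crossing $A$, under the assumption that this size is finite.

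Next, I would apply axiom (5) of factor systems to the pair $(H_1, H_2)$: either $\mathrm{diam}(A) \leq \cc_2$, or $A \in \mathfrak{F}$. In the first case, the cocompactness of $X$ implies uniform local finiteness, so any convex subcomplex of diameter at most $\cc_2$ is crossed by at most some uniform constant $L_1$ hyperplanes, and $h_1, h_2$ are automatically $L_1$-well-separated.

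For the second case, $A \in \mathfrak{F}$, I would split according to whether $A$ is bounded. If $A$ is unbounded, it contains a combinatorial geodesic ray; as noted after Definition~\ref{def:facing triples}, any geodesic crosses a collection of hyperplanes with no facing triples, and each of these hyperplanes crosses $A$, hence crosses both $h_1$ and $h_2$. This produces arbitrarily large facing-triple-free collections of hyperplanes crossing both $h_1$ and $h_2$, so $h_1, h_2$ are not well-separated. If instead $A$ is bounded, I would invoke the key consequence of cocompactness combined with the factor system axioms: bounded elements of $\mathfrak{F}$ have uniformly bounded diameter. This uses that $\mathfrak{F}$ is invariant under the cocompact group action with finitely many orbits (a consequence of axiom (3) in the cocompact setting), and that parallel factors have equal diameter; hence one takes the maximum diameter over the finitely many orbits of bounded factors to get a uniform constant $L_2$. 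Uniform local finiteness then yields a uniform bound $L_3$ on the number of hyperplanes crossing any subcomplex of diameter $\leq L_2$.

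Setting $L := \max(L_1, L_3)$, I would conclude that any two hyperplanes of $X$ are either $L$-well-separated or admit arbitrarily large facing-triple-free collections crossing both (hence not well-separated). The main obstacle is justifying the uniform diameter bound on bounded elements of $\mathfrak{F}$ in the second sub-case; this is where both the cocompactness hypothesis and the factor system axioms play an essential role, paralleling the argument in Genevois' original treatment \cite{Gen-arXiv}.
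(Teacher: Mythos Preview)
Your reduction to the projection $A = \sg_{H_1}(H_2)$ via Proposition~\ref{prop: Projections between subcomplexes} is correct, and the dichotomy from axiom~(5) together with the unbounded case~(2a) are handled cleanly. The problem lies in case~(2b). You assert that bounded elements of $\mathfrak{F}$ have uniformly bounded diameter because ``$\mathfrak{F}$ is invariant under the cocompact group action with finitely many orbits (a consequence of axiom~(3) in the cocompact setting).'' But Definition~\ref{def:factorsystem} makes no reference to a group action, and $G$--invariance of $\mathfrak{F}$ does \emph{not} follow from axiom~(3). One can build factor systems on a cocompact cube complex that are not invariant under the group and whose bounded members have arbitrarily large diameter (e.g.\ on the cubulated line, take $\mathfrak{F}$ to consist of $X$, the vertices, and the intervals $[2^n,2^{n+1}]$; axioms~(1)--(5) hold with $\cc_1=4$, but the bounded factors have diameters $2^n\to\infty$). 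So your finiteness-of-orbits argument collapses.

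The paper's proof sidesteps this by contradiction: assuming pairs $(h_i,u_i)$ with well-separation constant exactly $r_i\to\infty$, it uses cocompactness of $G$ on $X$ and of $\mathrm{Stab}(h)$ on $N(h)$ to translate so that all the projections $\sg_H(W_i)$ contain a single fixed vertex $x$. The crucial point is that axiom~(4) guarantees \emph{every} combinatorial hyperplane lies in $\mathfrak{F}$, so even after translation the pair $(H,W_i)$ is in $\mathfrak{F}$ and axiom~(5) applies to $\sg_H(W_i)$. Now axiom~(3) is invoked once, locally at $x$: at most $\cc_1$ of the (pairwise distinct) projections $\sg_H(W_i)$ can lie in $\mathfrak{F}$, forcing infinitely many to have diameter $\le \cc_2$, contradicting $r_i\to\infty$. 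Your direct argument can be repaired along the same lines---translate each $A=\sg_{H_1}(H_2)$ with $\mathrm{diam}(A)>\cc_2$ so it meets a fixed fundamental domain, observe that the translate is again a projection of combinatorial hyperplanes and hence lies in $\mathfrak{F}$ by axioms~(4) and~(5), and then apply axiom~(3) at the finitely many vertices of the fundamental domain---but this is essentially the paper's mechanism, not the orbit-counting you proposed.
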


\begin{proof}
Let $G$ be the group acting cocompactly on $X$. Suppose for the sake of contradiction that there exists an increasing sequence of integers ($r_{i}$) and a sequence of pairs of hyperplanes $(h_{i}, u_{i})$ such that, for every $i \geq 1$, the hyperplanes
$h_{i}$ and $u_{i}$ are $r_{i}$-well-separated but not $(r_{i}-1)$-well-separated. Since $X$ is cocompact, up to passing to a subsequence of $h_i,$ there exists a hyperplane $h$ such that $h_{i}=g_ih$,   for $g_i \in Stab(h).$ Applying $g^{-1}_i$ to the sequence of pairs $(h_{i},u_{i})$, we obtain that the sequence of pairs $(h, g^{-1}_iu_{i})$ must also be $r_{i}$-well-separated but not $(r_{i}-1)$-well-separated. We will denote the hyperplanes $g^{-1}_iu_{i}$ by $v_i$. Now, let $x_i \in \sg_h(v_i)$. Since the stabiliser $Stab(h) \subset Isom(X)$ acts cocompactly on the carrier $N(h)$, there exists a combinatorial ball of a finite radius in $h$, call it $B$, and infinitely many  $g'_i \in G$ such that $g'_ix_i \in B$. As $B$ contains only finitely many vertices, up to passing to a subsequence, we have $g'_ix_i=x$ for some $x \in B.$ Thus, the hyperplanes $(h,g'_iv_i)$ are also $r_{i}$-well-separated but not $(r_{i}-1)$-well-separated. We will denote the hyperplanes $g'_iv_i$ by $w_i$. Thus, $\sg_{h}(w_{i})$ contains $x$ for every $i \geq 1$. Since the hyperplanes $(h,w_{i})$ are $r_{i}$-well-separated but not $(r_{i} - 1)$-well-separated,  we deduce from Proposition \ref{prop: Projections between subcomplexes} that the maximal cardinality of hyperplanes intersecting $\sg_{h}(w_{i})$ but not containing facing triple is precisely $r_i$.  Since $r_{i}$ is strictly increasing, we have $\sg_{h}(w_{i}) \neq \sg_{h}(w_{j})$ for all $i \neq j$. On the other hand, since $X$ is a CAT(0) cube complex with a factor system $FS$, using condition 3 of Definition \ref{def:factorsystem} there exists a constant $\cc$ such that at most $\cc$ elements of $\sg_h(w_i)$ are contained in $FS$. Therefore, condition 5 of Definition \ref{def:factorsystem} implies that there exists a constant $\cc'$ such that  for infinitely many $i,$ we have diam$(\sg_h(w_i)) \leq \cc'$, which is a contradiction.

\end{proof}

Therefore, in light of Lemma \ref{lem: well-separated implies L-well-separated}, we have the following. 

\begin{corollary} \label{cor: hyperplane charachterization for cocompact cube complexes}

Let $X$ be a finite dimensional cocompact CAT(0) cube complex with a factor system. There exists a constant $k$ such that a geodesic ray $b \in X$ is $\kappa$-contracting if and only if there exists a constant $\cc$ such that $b$ crosses an infinite sequence of hyperplanes $h_1, h_2,...$ at points $b(t_i)$ satisfying: 

\begin{enumerate}
    \item $d(t_i,t_{i+1}) \leq  \cc\kappa(t_{i+1}).$
    \item $h_i,h_{i+1}$ are $k$-well-separated.
\end{enumerate}
\end{corollary}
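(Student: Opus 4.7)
The plan is to derive this corollary as an immediate consequence of Theorem~\ref{thm: main theorem section 4} combined with Lemma~\ref{lem: well-separated implies L-well-separated}. Since $X$ is cocompact it is in particular locally finite, so Theorem~\ref{thm: main theorem section 4} applies to it, and the whole content of the corollary is that under the additional factor-system hypothesis the varying separation constant $\cc\kappa(t_{i+1})$ can be traded for a single universal constant $k$.

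For the forward direction I will assume $b$ is $\kappa$-contracting and apply Theorem~\ref{thm: main theorem section 4} to produce a constant $\cc$ and a sequence of hyperplanes $h_1, h_2, \ldots$ crossed by $b$ at points $b(t_i)$ satisfying the spacing condition $d(t_i, t_{i+1}) \leq \cc\kappa(t_{i+1})$ and the separation condition that $h_i, h_{i+1}$ are $\cc\kappa(t_{i+1})$-well-separated. In particular each consecutive pair is well-separated in the plain sense. I then invoke Lemma~\ref{lem: well-separated implies L-well-separated}, which provides a universal constant $L$ (depending only on $X$ and the factor system $\mathfrak{F}$) such that any well-separated pair is automatically $L$-well-separated. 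Setting $k := L$ completes this direction.

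The converse direction is even softer. Given a sequence $\{h_i\}$ with uniform $k$-well-separation and spacing constant $\cc$, I will use the standing convention $\kappa(t) \geq 1$ to observe that $k$-well-separation is a strictly stronger condition than $k\kappa(t_{i+1})$-well-separation. After replacing $\cc$ by $\cc' := \max\{\cc, k\}$, the given sequence satisfies the hypotheses of Theorem~\ref{thm: main theorem section 4} with constant $\cc'$, and the theorem then delivers the $\kappa$-contraction of $b$.

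The main step where something could fail is the appeal to Lemma~\ref{lem: well-separated implies L-well-separated}, which genuinely requires both cocompactness (to pull back pairs of well-separated hyperplanes by the group action into a compact region) and the factor-system hypothesis (to force the sequence $\sg_h(w_i)$ appearing in that lemma's proof to have eventually bounded diameter). Beyond invoking this lemma, no further geometric work is needed: the entire proof is a repackaging of the two cited results.
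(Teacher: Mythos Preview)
Your proposal is correct and follows exactly the same approach as the paper, which simply states that the result is immediate by applying Lemma~\ref{lem: well-separated implies L-well-separated} to Theorem~\ref{thm: main theorem section 4}. You have merely spelled out the two directions in more detail than the paper's one-line proof.
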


\begin{proof}
This is immediate by applying Lemma \ref{lem: well-separated implies L-well-separated} to Theorem \ref{thm: main theorem section 4}.
\end{proof}

As an application of Corollary \ref{cor: hyperplane charachterization for cocompact cube complexes}, we show that for any CAT(0) cube complex with a factor system, $\kappa$-contracting geodesic rays project to subsets of infinite diameter in the well-separation space. In order to make this precise, we define a projection map $p:X \rightarrow Y_k$ as in the following remark.

\begin{remark} \label{remark: projection to vertices}

For each $x \in X,$ let $n$ be the largest integer so that $x \in [0,1]^n$. We define $q(x)$ to be a $0$-cell (a vertex) of the cube $[0,1]^n$. Also, remark that since $X$ is a finite dimensional cube complex, there exists a constant $\dd$ depending only on the dimension of $X$ such that for any $x,y \in X$, if $l_k(x,y)$ is the cardinality of the maximal collection of $k$-well-separated hyperplanes separating $x,y$, then we have:

\begin{center}
$ d_k(q(x),q(y))-\dd\leq l_k(x,y) \leq d_k(q(x),q(y))+\dd. $
\end{center}
\end{remark}

Now, we prove Corollary \ref{cor:final statement of the intro} from the introduction.

\begin{corollary} \label{cor: progress in the well-separation space } Let $X$ be a finite dimensional cocompact CAT(0) cube complex with a factor system. There exists constants $\dd_1,\dd_2$ and an integer $k$ such that for any $\kappa$-contracting geodesic ray $b$ with $q(b(0))=\go,$ the projection map $q:X \rightarrow Y_k$ satisfies $$d_k(q(\go)), q(b(t))) \geq \frac{t}{\dd_1\kappa(t)}-\dd_2.$$

\end{corollary}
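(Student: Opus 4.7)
The plan is to directly adapt the right-angled Artin group argument (the proof preceding Corollary~\ref{hyperplane crossings in RAAG}) by using Corollary~\ref{cor: hyperplane charachterization for cocompact cube complexes} in place of the strong-separation characterization that was available in the Salvetti setting. Since the existence of a factor system provides a uniform well-separation constant $k$, the counting of hyperplanes crossed by $b$ will proceed exactly as in the RAAG case, and the passage to $d_k$ is then a routine bookkeeping step using Remark~\ref{remark: projection to vertices}.

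More precisely, I would first invoke Corollary~\ref{cor: hyperplane charachterization for cocompact cube complexes} to extract a uniform integer $k$ and an infinite sequence of hyperplanes $h_1, h_2, \ldots$ crossed by $b$ at points $b(t_i)$, with $h_i, h_{i+1}$ being $k$-well-separated and $d(t_i, t_{i+1}) \leq \cc \kappa(t_{i+1})$. For a given $t$, I would pick the largest index $i$ with $t_i \leq t$ and, exactly as in the RAAG proof, telescope along the $t_j$'s to obtain
\[
t = d(\go, b(t)) \leq \sum_{j=1}^{i+1} d(b(t_{j-1}), b(t_j)) \leq (i+1)\, \cc\, \kappa(t_{i+1}),
\]
which together with Proposition~\ref{lemma: constants} (applied to pass from $\kappa(t_{i+1})$ to $\kappa(t)$ at the cost of a multiplicative constant $\dd'$) yields $i \geq t/(\cc\dd'\kappa(t)) - O(1)$.

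Next I need to convert the existence of $i$ pairwise consecutive $k$-well-separated hyperplanes into a lower bound on $d_k$. For this I would verify that the entire collection $\{h_1, \ldots, h_i\}$ is pairwise $k$-well-separated: for indices $j > i+1$, the hyperplane $h_{i+1}$ separates $h_i$ from $h_j$ along $b$, so every hyperplane meeting both $h_i$ and $h_j$ must also meet $h_{i+1}$; hence any facing-triple-free family of hyperplanes crossing both $h_i$ and $h_j$ embeds into one crossing both $h_i$ and $h_{i+1}$, and the uniform bound $k$ is preserved. Consequently $l_k(b(0), b(t)) \geq i$, and Remark~\ref{remark: projection to vertices} then gives $d_k(q(\go), q(b(t))) \geq i - \dd$, concluding with $\dd_1 = \cc\dd'$ and $\dd_2 = \dd + O(1)$.

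The main obstacle is the second point above, namely propagating consecutive $k$-well-separation to pairwise $k$-well-separation along the sequence; in the RAAG setting this was handled by quoting a theorem of Behrstock--Charney about strong separation, whereas here one must instead argue from the definition, exploiting the fact that $b$ crosses the $h_i$'s in order so that $h_{i+1}$ genuinely separates $h_i$ from $h_j$. Once this point is settled, the rest of the argument is a parallel of the RAAG case with an additive error coming from comparing $d_k$ on $Y_k$ with the counting function $l_k$ on $X$.
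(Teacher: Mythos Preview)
Your proposal is correct and follows essentially the same route as the paper: invoke Corollary~\ref{cor: hyperplane charachterization for cocompact cube complexes}, propagate consecutive $k$-well-separation to pairwise $k$-well-separation via the separation argument (exactly as the paper does, by induction on the index gap), telescope to bound $i$ below by $t/(\cc\kappa(t_i))$, and finish with Remark~\ref{remark: projection to vertices} and Proposition~\ref{lemma: constants}. The only cosmetic difference is the order in which you carry out the telescoping and the pairwise well-separation step.
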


\begin{proof} Let $b$ be some $\kappa$-contracting geodesic ray. By Corollary \ref{cor: hyperplane charachterization for cocompact cube complexes}, there exists an integer $\cc$ such that $b$ crosses an infinite sequence of hyperplanes $h_i$ at points $t_i$ satisfying $d(t_i,t_{i+1}) \leq  \cc\kappa(t_{i+1})$ such that $h_i,h_{i+1}$ are $\cc$-well-separated. Notice that for each $i$, since $h_i \cap h_{i+1}= h_{i+1} \cap h_{i+2} =\emptyset$, the hyperplane $h_i$ separates $h_{i-1}$ from $h_{i+1}$. Thus, every hyperplane intersecting both $h_i,h_{i+2}$ must also intersect $h_i, h_{i+1}$. Therefore, $h_i, h_{i+2}$ are also $\cc$-well-separated. Indeed, by induction, this implies that for any $j<k,$ the hyperplanes $h_j,h_k$ are also $\cc$-well-separated. Now, let $t>0$ and let $i$ be so that $t_{i-1} \leq t \leq t_{i}.$

\begin{align*} 
 d(\go, b(t)) &\leq d(\go, b(t_1))+d(b(t_1), b(t_2))+...+d(b(t_{i-1}), b(t_{i}))\\
                                                               & \leq \cc\kappa(t_1)+\cc \kappa(t_2)+...+\cc \kappa(t_{i})\\
                                                               & \leq \cc \kappa(t_{i})+\cc \kappa(t_{i})...+\cc \kappa(t_{i}) \qquad \text{($\kappa$ is monotone nondecreasing)}\\
                                                               &=i\cc \kappa(t_{i}).
\end{align*}

This gives that $i \geq \frac{t}{\cc\kappa(t)}$ where $i-1$ is a lower bound on the number of $\cc$-well-separated hyperplanes separating $b(0)$ and $b(t)$. Let $k=\cc$, using Remark \ref{remark: projection to vertices}, there exists a constant $\dd$ so that $d_k(q(b(0)),q(b(t))) \geq i-\dd \geq \frac{t}{\cc\kappa(t_i)}-\dd \geq \frac{t}{k\kappa(t_i)}-\dd.$ Using Proposition \ref{lemma: constants}, there exists a constant $\dd'$ such that $\kappa(t_i) \leq \dd' \kappa(t),$ therefore, we have $d_k(q(\go),q(b(t))) \geq i-\dd \geq \frac{t}{\cc\kappa(t_i)}-\dd \geq \frac{t}{\cc\kappa(t_i)}-\dd \geq \frac{t}{\cc\dd'\kappa(t)}-\dd$.

\end{proof}

\bibliography{main}{}
\bibliographystyle{alpha}

\end{document}